\documentclass[12pt]{amsart}
\usepackage{amssymb,latexsym,amsmath,amscd,amsthm,graphicx, color}
\usepackage[all]{xy}
\usepackage{pgf,tikz}
\usepackage{mathrsfs}
\usetikzlibrary{arrows}
\definecolor{uuuuuu}{rgb}{0.26666666666666666,0.26666666666666666,0.26666666666666666}
\definecolor{xdxdff}{rgb}{0.49019607843137253,0.49019607843137253,1.}
\definecolor{ffqqqq}{rgb}{1.,0.,0.}

\pagestyle{empty}

\definecolor{uuuuuu}{rgb}{0.26666666666666666,0.26666666666666666,0.26666666666666666}
\definecolor{qqwuqq}{rgb}{0.,0.39215686274509803,0.}
\definecolor{zzttqq}{rgb}{0.6,0.2,0.}
\definecolor{xdxdff}{rgb}{0.49019607843137253,0.49019607843137253,1.}
\definecolor{qqqqff}{rgb}{0.,0.,1.}
\definecolor{cqcqcq}{rgb}{0.7529411764705882,0.7529411764705882,0.7529411764705882}

\setlength{\oddsidemargin}{0 in} \setlength{\evensidemargin}{0 in}
\setlength{\textwidth}{6.65 in} \setlength{\topmargin}{-.6 in}
\setlength{\headheight}{.00 in} \setlength{\headsep}{.3 in }
\setlength{\textheight}{9.7 in} \setlength{\footskip}{.4 in}

\theoremstyle{plain}

\newtheorem{lemma}[subsection]{Lemma}

\newtheorem{defi}[subsection]{Definition}
\theoremstyle{definition}
\newtheorem{prop}[subsection]{Proposition}
\newtheorem{cor}[subsection]{Corollary}
\newtheorem{example}[subsection]{Example}

\newtheorem{remark}[subsection]{Remark}

\newtheorem{note}[subsection]{Note}


\newcommand{\uu}{\cup}
\newcommand{\ii}{\cap}

\newcommand{\sci}{\subset}
\newcommand{\es}{\emptyset}
\newcommand{\set}[1]{\{#1\}}


\newcommand{\ga}{\alpha}
\newcommand{\gb}{\beta}

\newcommand{\gd}{\delta}
\renewcommand{\gg}{\gamma}

\newcommand{\go}{\omega}

\newcommand{\gs}{\sigma}
\newcommand{\gt}{\tau}

\newcommand{\gO}{\Omega}

\newcommand{\tbf}{\textbf}
\newcommand{\tit}{\textit}

\newcommand{\D}[1]{\mathbb{#1}}
\newcommand{\F}[1]{\mathfrak{#1}}

\newcommand{\te}{\text}


\begin{document}

To appear, Fractal and Fractional
\title{Quantization for infinite affine transformations}

\author{Do\u gan \c C\"omez}
\address{Department of Mathematics \\
408E24 Minard Hall,
North Dakota State University
\\
Fargo, ND 58108-6050, USA.}
\email{Dogan.Comez@ndsu.edu}

\author{ Mrinal Kanti Roychowdhury}
\address{School of Mathematical and Statistical Sciences\\
University of Texas Rio Grande Valley\\
1201 West University Drive\\
Edinburg, TX 78539-2999, USA.}
\email{mrinal.roychowdhury@utrgv.edu/roychowdhurymk@gmail.com}

\subjclass[2010]{60Exx, 28A80, 94A34.}
\keywords{Affine transformations, affine set, affine measure, optimal quantizers, quantization error}

\date{}
\maketitle

\pagestyle{myheadings}\markboth{Do\u gan \c C\"omez and Mrinal Kanti Roychowdhury}{Quantization for infinite affine transformations}

\begin{abstract} Quantization for a probability distribution refers to the idea of estimating a given probability by a discrete probability supported by a finite set.  In this article, we consider a probability distribution generated by an infinite system of affine transformations $\{S_{ij}\}$ on $\mathbb R^2$ with associated probabilities $\{p_{ij}\}$ such that $p_{ij}>0$ for all $i, j\in \mathbb N$ and $\sum_{i, j=1}^\infty p_{ij}=1$. For such a probability measure $P$, the optimal sets of $n$-means and the $n$th quantization error are calculated for every natural number $n$. It is shown that the distribution of such a probability measure is the same as that of the direct product of the Cantor distribution.  In addition, it is proved that the quantization dimension $D(P)$ exists and is finite; whereas, the $D(P)$-dimensional quantization coefficient does not exist, and the $D(P)$-dimensional lower and the upper quantization coefficients lie in the closed interval $[\frac{1}{12}, \frac{5}{4}]$.
\end{abstract}

\section{Introduction}
The quantization problem for probability measures is concerned with approximating a given measure by discrete measures of finite support in $L_r$-metrics. This problem has root in information theory and engineering technology, in particular in signal processing and pattern recognition (\cite{BW, GN}).
For a Borel probability measure $P$ on $\D R^d,$ a quantizer is a function $q$ mapping $d$-dimensional vectors in the domain $\gO \sci \D R^d$ into a finite set of vectors $\ga\sci \D R^d$.
In this case, the error $\int \min_{a \in \ga} \|x-a\|^2 dP(x),$ where $\|\cdot\|$ is the Euclidean norm $\D R^d, $ is often referred to as the \tit{variance, cost,} or \tit{distortion error} for $\ga$ with respect to the measure $P$, and is denoted by $V(\ga):= V(P; \ga)$. The value $\inf\set{V(P; \ga) :\alpha \subset \mathbb R^d, \text{ card}(\alpha) \leq n}$ is called the \tit{$n$th quantization error} for the $P$, and is denoted by $V_n:=V_n(P)$.  A set $\ga$ on which this infimum is attained and contains no more than $n$ points is called an \tit{optimal set of $n$-means}. The elements of an optimal set are called \tit{optimal quantizers}.  It is known that for a Borel probability measure $P$ if its support contains infinitely many elements and $\int \| x\|^2 dP(x)$ is finite, then an optimal set of $n$-means always has exactly $n$-elements \cite{AW, GKL, GL1, GL2}.  The number
$\lim_{n\to \infty}  \frac{2\log n}{-\log V_n(P)}, $ if exists, is called the \tit{quantization dimension} of the measure $P$, and is denoted by $D(P)$; likewise, for any $s\in (0, +\infty)$, the number $\lim\limits_{n\to\infty} n^{\frac 2 s} V_n(P)$, if exists, is called the \tit{$s$-dimensional quantization coefficient} for $P$.

For a finite set $\ga \subset \D R^d,$ the Voronoi region generated by $a\in \ga,$ denoted by $M(a|\ga),$ is the set of all points in $\D R^d$ which are closer to $a \in \ga$ than to all other elements in $\ga . $
For a probability distribution $P$ on $\D R^d$ the centroids of the regions $M(a|\ga)$ are given by $a^\ast=\frac{1}{P(M (a|\ga))}\int_{M(a|\ga)} x dP. $
A Voronoi tessellation is called a \tit{centroidal Voronoi tessellation} (CVT) if $a^\ast=a$, i.e., if the generators are also the centroids of their own Voronoi regions.
For a Borel probability measure $P$ on $\D R^d$, an optimal set of $n$-means forms a CVT; however, the converse is not true in general \cite{DFG, R3}.
The following fact is known \cite{GG, GL2}:
\begin{prop} \label{prop000}
Let $\ga$ be an optimal set of $n$-means and $a\in \ga$. Then,
\begin{itemize}
\item[(i)] $P(M(a|\ga))>0$ and $ P(\partial M(a|\ga))=0$,
\item[(ii)] $a=E(X : X \in M(a|\ga))$, where $X$ is a random variable with distribution $P,$
\item[(iii)] $P$-almost surely the set $\set{M(a|\ga) : a \in \ga}$ forms a Voronoi partition of $\D R^d$.
\end{itemize}
\end{prop}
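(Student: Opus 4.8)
The plan is to prove the four assertions in the order (i), (iii), (ii), (iv), following the standard line of argument for optimal quantizers (cf.\ \cite{GG, GL2}). I will use two elementary facts repeatedly: that every Voronoi partition $\set{A_d : d \in \ga}$ of $\D R^d$ realizes the distortion, $\sum_{d \in \ga} \int_{A_d} \|x - d\|^2\, dP = V(\ga) = V_n$ (because $\|x-d\|^2 = \min_{e}\|x-e\|^2$ $P$-a.e.\ on $A_d \ci M(d|\ga)$), and the variance decomposition $\int_{A}\|x - y\|^2\, dP = \int_{A}\|x - c_A\|^2\, dP + P(A)\,\|y - c_A\|^2$, valid for every Borel set $A$ with $P(A) > 0$ and its $P$-barycenter $c_A = \tfrac{1}{P(A)}\int_A x\, dP$.

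For (i): if $P(M(a|\ga)) = 0$ for some $a \in \ga$, then in any Voronoi partition the cell $A_a$ is $P$-null, so the distortion of $\ga \bs \set{a}$ does not exceed $V(\ga) = V_n$; since $\ga \bs \set{a}$ has at most $n-1$ points this forces $V_{n-1} \le V_n$, contradicting the strict decrease of the quantization errors (which holds since $\mathrm{supp}(P)$ is infinite for the continuous measure $P$ at hand). For (iii) I would first establish the auxiliary statement that for \emph{every} Voronoi partition $\set{A_d}$ one has $a = c_{A_a}$ for all $a \in \ga$: were $a \ne c_{A_a}$, replacing the generator $a$ by $c_{A_a}$ while keeping the same cells would strictly decrease $\sum_d \int_{A_d}\|x - d\|^2\, dP$ below $V_n$ by the variance decomposition (using $P(A_a) > 0$ from (i)), contradicting optimality. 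Granting (ii), every Voronoi partition agrees with $\set{M(a|\ga) : a\in\ga}$ up to a $P$-null set, so this auxiliary statement is precisely $a = E(X : X \in M(a|\ga))$.

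The crux is (ii). Suppose $P(\partial M(a_0|\ga)) > 0$ for some $a_0 \in \ga$. The boundary lies in the finite union of the bisecting hyperplanes $H_{a_0 b} = \set{x : \|x - a_0\| = \|x - b\|}$, and on each of these it is contained in $M(a_0|\ga) \ii H_{a_0 b}$, so $P(E) > 0$ for $E := M(a_0|\ga) \ii H_{a_0 b_0}$ with some $b_0 \ne a_0$; every $x \in E$ is at minimal distance from both $a_0$ and $b_0$. Fixing a Voronoi partition and writing $E = \UU_d (E \ii A_d)$, we get $P(E \ii A_{d_0}) > 0$ for some $d_0$, and for such $x$ the generators $a_0,b_0,d_0$ are all at the same (minimal) distance. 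Choose $e \in \set{a_0,b_0}\bs\set{d_0}$ and move the $P$-mass of $E \ii A_{d_0}$ from the cell of $d_0$ to the cell of $e$; since $\|x-d_0\| = \|x-e\|$ there, $\sum_d\int_{A_d}\|x-d\|^2\,dP$ is unchanged and still equals $V_n$. By the auxiliary form of (iii) the old cell of $e$ has barycenter $e$, so the enlarged cell has barycenter a strict convex combination of $e$ and the barycenter $h$ of $E \ii A_{d_0}$; but $h \in H_{a_0 b_0}$, a hyperplane containing neither $a_0$ nor $b_0$, so $h \ne e$ and the enlarged cell's barycenter differs from $e$. Applying the replacement step of (iii) to this new partition then strictly decreases the distortion below $V_n$ — a contradiction. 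Finally, (iv) is essentially formal: a Voronoi partition exists (assign each point measurably to a nearest generator), its cells are contained in the respective $M(a|\ga)$ and cover $\D R^d$, and they overlap only within $\UU_{c \ne d} H_{cd}$, which is $P$-null by (ii); hence $\set{M(a|\ga) : a \in \ga}$ is a partition of $\D R^d$ modulo a $P$-null set.

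I expect the main obstacle to be (ii), and within it two points need care. First, the proof of (iii) must be arranged so as \emph{not} to invoke (ii) — hence the auxiliary barycenter statement for arbitrary Voronoi partitions, which breaks a potential circularity. Second, the argument hinges on the geometric observation that a generator never lies on any of its own bisecting hyperplanes; this is exactly what forces the reassigned cell's barycenter to move and thereby produces the strict improvement. The remaining ingredients — the removal argument for (i), the construction of a measurable Voronoi partition for (iv), and the variance decomposition — are routine.
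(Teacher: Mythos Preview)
The paper does not prove Proposition~\ref{prop000}; it merely states it as a known result and refers the reader to \cite{GG, GL2}. Your sketch is essentially the standard argument given in those references (in particular, Theorem~4.1 and Lemma~4.2 of \cite{GL2}), and the logical order you chose---establishing the barycenter property for \emph{arbitrary} Voronoi partitions first, then using it to derive (ii)---is exactly the way the circularity between (ii) and (iii) is handled there. One small point worth making explicit in your write-up of (ii): after moving the mass of $E\cap A_{d_0}$ into the cell of $e$, the resulting family of cells is again a Voronoi partition (since every point of $E$ lies in $M(e|\ga)$), so your auxiliary version of (iii) applies to it and yields the contradiction; you use this implicitly but it is the hinge of the argument.
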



Let $X=\mathbb{R}$ and consider the probability distribution $P_c:=\frac 12 P_c\circ U_1^{-1}+\frac 12 P_c\circ U_2^{-1},$ where $U_1(x)=\frac 13 x$ and $U_2(x)=\frac 13 x+\frac 23 , $ for all $x\in \D R$.  Since its support is the standard Cantor set generated by $U_1$ and $U_2$, $P_c$ is called the Cantor distribution.  S. Graf and H. Luschgy determined the optimal sets of $n$-means and the $n$th quantization errors  for the Cantor distribution, for all $n \geq 1,$ completing its quantization program \cite{GL3}.  This result has been extended to the setting of a nonuniform Cantor distribution by L. Roychowdhury \cite{R1}.  Analogously, the Cantor dust is generated by the contractive mappings $\{S_i \}_{i=1}^4 $ on $\D R^2,$ where $S_1(x_1, x_2)=\frac 13(x_1, x_2)$, $S_2(x_1, x_2)=\frac 13(x_1, x_2) + (\frac 23, 0)$, $S_3(x_1, x_2)=\frac 13(x_1, x_2) +(0, \frac 23)$, and $S_4(x_1, x_2)=\frac 13(x_1, x_2)+(\frac 23, \frac 23)$. If $P$ is a Borel probability measure on $\D R^2$ such that $P=\frac 1 4P\circ S_1^{-1}+\frac 1 4P\circ S_2^{-1}+\frac 1 4P\circ S_3^{-1}+\frac 1 4P\circ S_4^{-1}$, then $P$ has support the Cantor dust. For this measure, D. \c C\"omez and M.K. Roychowdhury determined the optimal sets of $n$-means and the $n$th quantization errors \cite{CR}. Let $P$ be a probability measure on $\D R$ generated by an infinite collection of similitudes $\set{S_j}_{j=1}^\infty , $ where $S_j(x)=\frac 1 {3^j} x+1-\frac{1}{3^{j-1}}$ for all $x\in\D R$ and $P$ is given by $P=\sum_{j=1}^\infty \frac 1{2^j} P\circ S_j^{-1}$. For this measure, M.K. Roychowdhury determined the optimal sets of $n$-means and the $n$th quantization errors \cite{R2}, which is an infinite extension of the result of S. Graf and H. Luschgy in \cite{GL3}.  The quantization dimension for probability distributions generated by an infinite collection of similitudes was determined by E. Mihailescu and M.K. Roychowdhury in \cite{MR}, which is an infinite extension of the result of S. Graf and H. Luschgy in \cite{GL4}.
In this article, we  study extension of the result of D. \c C\"omez and M.K. Roychowdhury in \cite{CR} to the setting of countably infinite affine maps on $\D R^2$, which will also complete the program initiated in \cite{MR}.

Let $\set{S_{(i, j)} : i, j\in \D N}$ be a collection of countably infinite affine transformations on $\D R^2$, where $S_{(i, j)}(x_1, x_2)=(r^i x_1+1-r^{i-1}, r^j x_2+1-r^{j-1})$, where $0<r \leq \frac{1}{3}. $  Clearly, these affine transformations are all contractive but are not similarity mappings. Associate the mappings $S_{(i, j)}$ with the probabilities $p_{(i,j)}$ such that $p_{(i, j)}=\frac 1{2^{i+j}}$ for all $i, j\in \D N$, where $\D N:=\set{1, 2, 3, \cdots}$. Then, there exists a unique Borel probability measure $P$ on $\D R^2$  (\cite{H}, \cite{MaU}, \cite{M}, etc.) such that
\[P=\sum_{i, j=1}^\infty p_{(i, j)} P\circ S_{(i, j)}^{-1}.\]
The support of such a probability measure lies in the unit square $[0,1]^2 .$  We call such a measure an \tit{affine measure} on $\D R^2, $ or more specifically, an \tit{infinitely generated affine measure} on $\D R^2$. This article deals with the quantization of this measure $P$. The arrangement of the paper is as follows: in Section~2, we discuss the basic definitions and lemmas about the optimal sets of $n$-means and the $n$th quantization errors.  The arguments in this section point out that determining optimal sets of n-means and the $n$th quantization errors for all $n \geq 3$ and for arbitrary $r \in (0, \frac{1}{3})$ require very intricate and complicated analysis; hence, for clarity purposes,
in the remaining sections the focus will be on the case $r=\frac{1}{3} . $  Section 3 is devoted to determining the optimal sets of $n$-means for $n=2$ and $n=3$. In Section~4, we first define a mapping $F$ which enables us to convert the infinitely generated affine measure $P$ to a finitely generated product measure $P_c\times P_c$, each $P_c$ is the Cantor distribution. 
Having this connection between $P$ and $P_c;$ together with
the optimal sets of $n$-means for $n = 1, 2, 3,$ in Section 5 we will utilize the dynamics of the affine maps to obtain the main results of the paper: closed formulas to determine the optimal sets of $n$-means and the corresponding quantization errors for all $n\geq 4$.  For clarity of the exposition, we also provide some examples and figures to illustrate the constructions.  Lastly, having closed form for the quantization errors for each $n, $ we prove the existence of the quantization dimension $D(P)$ and show that the $D(P)$-dimensional quantization coefficient for $P$ does not exist (but are finite) and the $D(P)$-dimensional lower and the upper quantization coefficients lie in the closed interval $[\frac{1}{12}, \frac{5}{4}]$.

The results and the arguments in this article are not straightforward generalizations of those in \cite{R2}; in particular, this is the case for optimal sets.   By the nature of the affine transformations considered in this paper, the optimal sets of order $n=k^2 ,\ k\geq 1, $ are the same as the cross product of optimal sets of order $k$ obtained in \cite{R2}; however, the same cannot be said for other $n\geq 3.$  Clearly, for $n$ a prime number, optimal sets of $n$-means cannot be obtained this way.  Furthermore, as will be seen from the main theorem, even for $n=kl, $ optimal sets of $n$-means are not the same as the cross product of optimal sets of $k$- and $l$-means in \cite{R2}.  For example, optimal sets of 2- and 3-means in \cite{R2} are $\{\frac16, \frac{5}{6}\} $ and $\{\frac{1}{6}, \frac{13}{18}, \frac{17}{18}\} $ (or $\{\frac{1}{18}, \frac{5}{18}, \frac{5}{6}\}),$ respectively; hence, cross product of these sets produce some of the optimal sets of 6-means.  On the other hand, one of the optimal sets of 6-means is $\{(\frac{1}{18}, \frac{1}{6}), (\frac{5}{6}, \frac{1}{6}), (\frac{13}{18}, \frac{1}{6}), (\frac{1}{18}, \frac{5}{6}), (\frac{13}{18}, \frac{5}{6}), (\frac{5}{6},\frac{5}{6}) \}$, which cannot be obtained as the cross product of optimal sets of 2- and 3-means in \cite{R2}.

\section{Preliminaries}

Let $P$ be the affine measure on $\D R^2$ generated by the affine maps $\set{S_{(i, j)} : i, j\in \D N}$ defined above.
Consider the alphabet $\mathcal{I}=\D N^2=\set{(i, j) : i, j\in \D N}$.  By a \textit{string} or a \textit{word} $\go$ over $\mathcal{I}$, it is meant a finite sequence $\go:=\go_1\go_2\cdots \go_k$
of symbols from the alphabet, $k\geq 1$, where $k$ is called the length of the word $\go$.  A word of length zero is called the \textit{empty word}, and is denoted by $\emptyset$.  By $\mathcal{I}^*$ we denote the set of all words
over the alphabet $\mathcal{I}$ of some finite length $k, $ including the empty word $\emptyset$. By $|\go|$, we denote the length of a word $\go \in \mathcal{I}^\ast$.  For any two words $\go:=\go_1\go_2\cdots \go_k$ and
$\tau:=\tau_1\tau_2\cdots \tau_\ell$ in $\mathcal{I}^\ast$, by
$\go\tau:=\go_1\cdots \go_k\tau_1\cdots \tau_\ell$ we mean the word obtained from the
concatenation of $\go$ and $\tau$. For $n\geq 1$ and $\go=\go_1\go_2\cdots\go_n\in \mathcal{I}^\ast$ we define $\go^-:=\go_1\go_2\cdots\go_{n-1}$. Note that $\go^-$ is the empty word if the length of $\go$ is one. Analogously, by $\D N^\ast$ we denote the set of all words over the alphabet $\D N$, and for any $\gt\in \D N^\ast , \  |\gt|$, $\gt^-$, etc. are defined similarly.  Let $\go \in \mathcal{I}^k$, $k\geq 1$, be such that $\go=(i_1, j_1)(i_2, j_2)\cdots(i_k, j_k)$, then $\go^{(1)}$ and $\go^{(2)}$ will denote the ``coordinate words"; i.e., $\go^{(1)}:=i_1i_2\cdots i_k$ and $\go^{(2)}:=j_1j_2\cdots j_k$. Thus, $\go_{|\go|}^{(1)}=i_k$ and $\go_{|\go|}^{(2)}=j_k$. These lead us to define the following notations: For $\go \in \mathcal{I}^\ast$, by $\go (\es, \infty)$ it is meant the set of all words $\go^-(\go^{(1)}_{|\go|}, \go^{(2)}_{|\go|}+j)$ obtained by concatenating the word $\go^-$ with the word $(\go^{(1)}_{|\go|}, \go^{(2)}_{|\go|}+j)$ for $j\in \D N$, i.e.,
\[\go (\es, \infty):=\set{\go^-(\go^{(1)}_{|\go|}, \go^{(2)}_{|\go|}+j) : j\in \D N}.\]
Similarly,  $\go (\infty, \es)$ and $\go (\infty, \infty)$ represent the sets
\[\go (\infty, \es):=\set{\go^-(\go^{(1)}_{|\go|}+i, \go^{(2)}_{|\go|}) : i\in \D N} \te{ and } \go (\infty, \infty):=\set{\go^-(\go^{(1)}_{|\go|}+i, \go^{(2)}_{|\go|}+j) : i, j\in \D N},\]
respectively. Analogously, for any $\gt \in \D N^\ast$, by $(\gt, \infty)$ it is meant the set $(\gt, \infty):=\set{\gt+i : i \in \D N}$, and $(\gt, \es)$ represents the set $(\gt, \es):=\set{\gt}$.
Thus, if $\go=(i_1, j_1)(i_2, j_2)\cdots(i_k, j_k)(\infty, \es)$, then we write $\go^{(1)}:=(i_1i_2\cdots i_k,\infty)$ and $\go^{(2)}:=j_1j_2\cdots j_k$; if $\go=(i_1, j_1)(i_2, j_2)\cdots(i_k, j_k)(\es, \infty)$, then we write $\go^{(1)}:=i_1i_2\cdots i_k$ and $\go^{(2)}:=(j_1j_2\cdots j_k, \infty)$; and if $\go=(i_1, j_1)(i_2, j_2)\cdots(i_k, j_k)(\infty, \infty)$, then we write $\go^{(1)}:=(i_1i_2\cdots i_k, \infty)$ and $\go^{(2)}:=(j_1j_2\cdots j_k, \infty)$.
For $\go=\go_1 \go_2 \cdots \go_k \in \mathcal{I}^k$,  $k\geq 1,$ let us write
\begin{align*}
S_\go:&=S_{\go_1}\circ \cdots \circ S_{\go_k}, \, p_\go:=p_{\go_1} p_{\go_2} \cdots p_{\go_k} \text{ and } J_\go:=S_\go([0, 1]\times [0, 1]).
\end{align*}
In particular, $S_\es =I, $ the identity mapping on $\D R^2, $
and $J:=J_\es=S_\es([0, 1]\times[0, 1])$.
Then, the probability measure $P$ has support the closure of the limit set $\mathcal{S} $, where $\mathcal{S}=\bigcap_{k\in \D  N} \bigcup_{\go \in \mathcal{I}^k} J_\go$. The limit set $\mathcal{S}$ is called the \tit{affine set} or \tit{infinitely generated affine set}.  For  $\go \in \mathcal{I}^k$ and $i, j \in \D N$, the rectangles $J_{\go (i, j)}$, into which $J_\go$ is split up at the $(k+1)$th level are called the \tit{children} or the \tit{basic rectangles} of $J_\go$ (see Figure~\ref{Fig1}). For $\go \in \mathcal{I}^\ast$, we write
$$   J_{\go(\es, \infty)}:=\mathop{\uu}\limits_{j=1}^\infty J_{\go^-(\go^{(1)}_{|\go|}, \, \go^{(2)}_{|\go|}+j)}, \ \
 J_{\go(\infty, \es)}:=\mathop{\uu}\limits_{i=1}^\infty J_{\go^-(\go^{(1)}_{|\go|}+i,\, \go^{(2)}_{|\go|})}, \ \
J_{\go(\infty, \infty)}:=\mathop{\uu}\limits_{i, j=1}^\infty J_{\go^-(\go^{(1)}_{|\go|}+i,\,  \go^{(2)}_{|\go|}+j)}; $$
$$ \aligned & p_{\go(\es, \infty)}:=P(J_{\go(\es, \infty)})=\mathop{\sum}\limits_{j=1}^\infty p_{\go^-(\go^{(1)}_{|\go|}, \, \go^{(2)}_{|\go|}+j)}, \ \
 p_{\go(\infty, \es)}:=P(J_{\go(\infty, \es)})=\mathop{\sum}\limits_{i=1}^\infty p_{\go^-(\go^{(1)}_{|\go|}+i,\, \go^{(2)}_{|\go|})}, \ \text{and} \\
& p_{\go(\infty, \infty)}:=P(J_{\go(\infty, \infty)})=\mathop{\sum}\limits_{i, j=1}^\infty p_{\go^-(\go^{(1)}_{|\go|}+i,\,  \go^{(2)}_{|\go|}+j)}.\endaligned $$
Notice that for any $\go \in \mathcal{I}^\ast$,
$p_{\go(\es, \infty)}=p_{\go^-}\sum_{j=1}^\infty \frac{1}{2^{\go^{(1)}_{|\go|}+\go^{(2)}_{|\go|}+j}}=p_{\go^-}p_{\go_{|\go|}}\sum_{j=1}^\infty \frac 1{2^j}=p_{\go^-}p_{\go_{|\go|}}=p_\go$; and similarly,
$p_{\go(\infty, \es)}=p_{\go(\infty, \infty)}=p_\go.$

\begin{figure}
\begin{tikzpicture}[line cap=round,line join=round,>=triangle 45,x=0.5 cm,y=0.5 cm]
\clip(-2.4908520423188394,-0.4928312489533277) rectangle (12.340086210009787,12.653642597251718);
\draw (0.,12.)-- (12.,12.);
\draw (12.,12.)-- (12.,0.);
\draw (12.,0.)-- (0.,0.);
\draw (0.,0.)-- (0.,12.);
\draw (0.,0.)-- (4.,0.);
\draw (0.,4.)-- (4.,4.);
\draw (4.,4.)-- (4.,0.);
\draw (0.,8.)-- (4.,8.);
\draw (0.,9.333333333333332)-- (4.,9.333333333333332);
\draw (4.,9.333333333333332)-- (4.,8.);
\draw (0.,10.666666666666666)-- (0.,11.11111111111111);
\draw (0.,10.666666666666666)-- (4.,10.666666666666666);
\draw (4.,10.666666666666666)-- (4.,11.11111111111111);
\draw (4.,11.11111111111111)-- (0.,11.11111111111111);
\draw (8.,0.)-- (9.333333333333332,0.);
\draw (8.,0.)-- (9.333333333333334,0.);
\draw (9.333333333333334,0.)-- (9.333333333333334,4.);
\draw (9.333333333333334,4.)-- (8.,4.);
\draw (8.,0.)-- (8.,4.);
\draw (8.,8.)-- (8.,9.333333333333334);
\draw (8.,9.333333333333334)-- (9.333333333333334,9.333333333333334);
\draw (9.333333333333334,9.333333333333334)-- (9.333333333333334,8.);
\draw (9.333333333333334,8.)-- (8.,8.);
\draw (8.,10.666666666666666)-- (9.333333333333334,10.666666666666666);
\draw (9.333333333333334,10.666666666666666)-- (9.333333333333334,11.11111111111111);
\draw (9.333333333333334,11.11111111111111)-- (8.,11.11111111111111);
\draw (8.,11.11111111111111)-- (8.,10.666666666666666);
\draw (10.666666666666666,0.)-- (11.11111111111111,0.);
\draw (10.666666666666666,0.)-- (10.666666666666666,4.);
\draw (10.666666666666666,4.)-- (11.11111111111111,4.);
\draw (11.11111111111111,4.)-- (11.11111111111111,0.);
\draw (10.666666666666666,8.)-- (11.11111111111111,8.);
\draw (11.11111111111111,8.)-- (11.11111111111111,9.333333333333334);
\draw (11.11111111111111,9.333333333333334)-- (10.666666666666666,9.333333333333334);
\draw (10.666666666666666,9.333333333333334)-- (10.666666666666666,8.);
\draw (10.666666666666666,10.666666666666666)-- (11.11111111111111,10.666666666666666);
\draw (11.11111111111111,10.666666666666666)-- (11.11111111111111,11.11111111111111);
\draw (11.11111111111111,11.11111111111111)-- (10.666666666666666,11.11111111111111);
\draw (10.666666666666666,11.11111111111111)-- (10.666666666666666,10.666666666666666);
\draw (10.504904953108322,12.508328849994291) node[anchor=north west] {$\vdots$};
\draw (1.6074824584095362,12.508328849994291) node[anchor=north west] {$\vdots$};
\draw (8.306263322640009,12.508328849994291) node[anchor=north west] {$\vdots$};
\draw (10.806345224853041,11.302600523900632) node[anchor=north west] {$\cdots$};
\draw (10.806345224853041,9.176814866257862) node[anchor=north west] {$\cdots$};
\draw (10.806345224853041,2.493745947678496) node[anchor=north west] {$\cdots$};
\draw (0.,1.3333333333333333)-- (1.3333333333333333,1.3333333333333333);
\draw (1.3333333333333333,1.3333333333333333)-- (1.3333333333333333,0.);
\draw (0.,3.1111111111111107)-- (1.3333333333333333,3.1111111111111107);
\draw (1.3333333333333333,3.1111111111111107)-- (1.3333333333333333,2.6666666666666665);
\draw (1.3333333333333333,2.6666666666666665)-- (0.,2.6666666666666665);
\draw (0.,3.7037037037037033)-- (1.3333333333333333,3.7037037037037033);
\draw (1.3333333333333333,3.7037037037037037)-- (1.3333333333333333,3.5555555555555554);
\draw (1.3333333333333333,3.5555555555555554)-- (0.,3.5555555555555554);
\draw (2.6666666666666665,0.)-- (2.6666666666666665,1.3333333333333333);
\draw (2.6666666666666665,1.3333333333333333)-- (3.111111111111111,1.3333333333333333);
\draw (3.111111111111111,1.3333333333333333)-- (3.111111111111111,0.);
\draw (2.6666666666666665,2.6666666666666665)-- (2.6666666666666665,3.111111111111111);
\draw (2.6666666666666665,3.111111111111111)-- (3.111111111111111,3.111111111111111);
\draw (3.111111111111111,3.111111111111111)-- (3.111111111111111,2.6666666666666665);
\draw (3.111111111111111,3.111111111111111)-- (3.111111111111111,2.6666666666666665);
\draw (3.111111111111111,2.6666666666666665)-- (2.6666666666666665,2.6666666666666665);
\draw (2.6666666666666665,3.5555555555555554)-- (2.6666666666666665,3.7037037037037037);
\draw (2.6666666666666665,3.7037037037037037)-- (3.111111111111111,3.7037037037037037);
\draw (3.111111111111111,3.7037037037037037)-- (3.111111111111111,3.5555555555555554);
\draw (3.111111111111111,3.5555555555555554)-- (2.6666666666666665,3.5555555555555554);
\draw (0.,8.444444444444445)-- (1.3333333333333333,8.444444444444445);
\draw (1.3333333333333333,8.444444444444445)-- (1.3333333333333333,8.);
\draw (0.,9.037037037037036)-- (1.3333333333333333,9.037037037037036);
\draw (1.3333333333333333,9.037037037037036)-- (1.3333333333333333,8.88888888888889);
\draw (1.3333333333333333,8.88888888888889)-- (0.,8.88888888888889);
\draw (2.6666666666666665,8.)-- (2.6666666666666665,8.444444444444445);
\draw (2.6666666666666665,8.444444444444445)-- (3.111111111111111,8.444444444444445);
\draw (3.111111111111111,8.444444444444445)-- (3.111111111111111,8.);
\draw (2.6666666666666665,8.88888888888889)-- (2.6666666666666665,9.037037037037036);
\draw (2.6666666666666665,9.037037037037036)-- (3.111111111111111,9.037037037037036);
\draw (3.111111111111111,9.037037037037036)-- (3.111111111111111,8.88888888888889);
\draw (3.111111111111111,8.88888888888889)-- (2.6666666666666665,8.88888888888889);
\draw (8.,1.3333333333333333)-- (8.444444444444445,1.3333333333333333);
\draw (8.444444444444445,1.3333333333333333)-- (8.444444444444445,0.);
\draw (8.,3.111111111111111)-- (8.444444444444445,3.111111111111111);
\draw (8.444444444444445,3.111111111111111)-- (8.444444444444445,2.6666666666666665);
\draw (8.444444444444445,2.6666666666666665)-- (8.,2.6666666666666665);
\draw (8.,3.7037037037037037)-- (8.444444444444445,3.7037037037037037);
\draw (8.444444444444445,3.7037037037037037)-- (8.444444444444445,3.5555555555555554);
\draw (8.444444444444445,3.5555555555555554)-- (8.,3.5555555555555554);
\draw (8.88888888888889,0.)-- (8.88888888888889,1.3333333333333333);
\draw (8.88888888888889,1.3333333333333333)-- (9.037037037037036,1.3333333333333333);
\draw (9.037037037037036,1.3333333333333333)-- (9.037037037037036,0.);
\draw (8.88888888888889,2.6666666666666665)-- (8.88888888888889,3.111111111111111);
\draw (8.88888888888889,3.111111111111111)-- (9.037037037037036,3.111111111111111);
\draw (9.037037037037036,3.111111111111111)-- (9.037037037037036,2.6666666666666665);
\draw (9.037037037037036,2.6666666666666665)-- (8.88888888888889,2.6666666666666665);
\draw (2.814667436690525,1.0965691596188184) node[anchor=north west] {$\cdots$};
\draw (1.1473648923979489,4.767887371492285) node[anchor=north west] {\tiny{(1,1)}};
\draw (1.1473648923979489,8.205139562939789) node[anchor=north west] {\tiny{(1,2)}};
\draw (1.1473648923979489,10.908319892342731) node[anchor=north west] {\tiny{(1,3)}};
\draw (7.754976394068301,8.205139562939789) node[anchor=north west] {\tiny{(2,2)}};
\draw (7.754976394068301,4.767887371492285) node[anchor=north west] {\tiny{(2,1)}};
\draw (9.958549267550184,4.767887371492285) node[anchor=north west] {\tiny{(3,1)}};
\draw (7.754976394068301,10.908319892342731) node[anchor=north west] {\tiny{(2,3)}};
\draw (-2.763673061498972,1.2537422988200826) node[anchor=north west] {\tiny{(1,1)(1,1)}};
\draw (1.303673061498972,2.895279241279673) node[anchor=north west] {\tiny{(1,1)(2,2)}};
\draw (-2.763673061498972,3.395279241279673) node[anchor=north west] {\tiny{(1,1)(1,2)}};
\draw (9.958549267550184,8.205139562939789) node[anchor=north west] {\tiny{(3,2)}};
\draw (9.958549267550184,10.908319892342731) node[anchor=north west] {\tiny{(3,3)}};
\draw (-2.763673061498972,8.737854137636163) node[anchor=north west] {\tiny{(1,2)(1,1)}};
\end{tikzpicture}
\caption{Basic rectangles of the infinite affine transformations.}  \label{Fig1}
\end{figure}
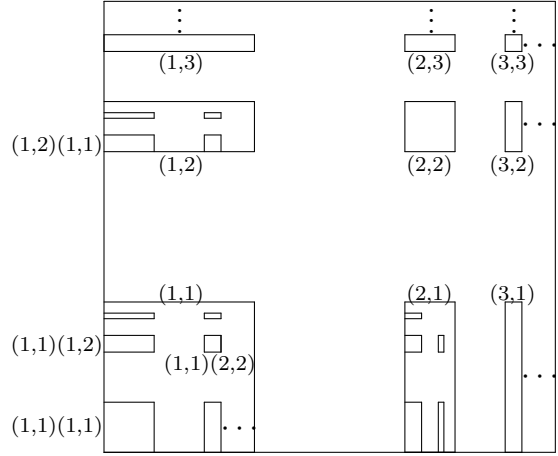

Since $P=\mathop{\sum}\limits_{i, j=1}^\infty p_{(i, j)} P\circ S_{(i, j)}^{-1}$, then, by induction, $P=\mathop\sum\limits_{\go \in \mathcal{I}^k} p_\go P\circ S_\go^{-1}$ for any $k\in \D N$.  Hence, we have the following statement:

\begin{lemma} \label{lemma1} Let $f: \D R^2 \to \D R^+$ be Borel measurable and $k\in \D N$. Then,
\[\int f \,dP=\sum_{\go \in \mathcal{I}^k}p_\go\int f\circ S_\go \,dP.\]
\end{lemma}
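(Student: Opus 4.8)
The plan is to prove the identity by induction on $k$, with the base case $k=1$ coming directly from the self-referential equation defining $P$, and the inductive step exploiting the multiplicativity $S_{\tau\eta}=S_\tau\circ S_\eta$ and $p_{\tau\eta}=p_\tau p_\eta$ of the word notation introduced above.

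For the base case, recall that $P=\sum_{(i,j)\in I}p_{(i,j)}\,P\circ S_{(i,j)}^{-1}$. Since $f\geq 0$, the monotone convergence theorem (equivalently, Tonelli's theorem for the counting measure on $I$) permits term-by-term integration against this countable sum of finite measures, giving
\[\int f\,dP=\sum_{(i,j)\in I}p_{(i,j)}\int f\,d\bigl(P\circ S_{(i,j)}^{-1}\bigr).\]
The change-of-variables (image measure) formula $\int g\,d(P\circ S^{-1})=\int g\circ S\,dP$, valid for any non-negative Borel function $g$ and any Borel map $S:\D R^2\to\D R^2$, then yields $\int f\,d(P\circ S_{(i,j)}^{-1})=\int f\circ S_{(i,j)}\,dP$, so that $\int f\,dP=\sum_{\go\in I^1}p_\go\int f\circ S_\go\,dP$, which is the claim for $k=1$.

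For the inductive step, suppose the identity holds at level $k$ for every non-negative Borel function. Each word $\go\in I^{k+1}$ factors uniquely as $\go=\gt\gh$ with $\gt\in I^k$ and $\gh\in I$, and then $p_\go=p_\gt p_\gh$ and $S_\go=S_\gt\circ S_\gh$, whence $f\circ S_\go=(f\circ S_\gt)\circ S_\gh$. Since all terms are non-negative, the double sum over $I^{k+1}$ may be rearranged as an iterated sum, so
\[\sum_{\go\in I^{k+1}}p_\go\int f\circ S_\go\,dP=\sum_{\gt\in I^k}p_\gt\sum_{\gh\in I}p_\gh\int(f\circ S_\gt)\circ S_\gh\,dP.\]
Applying the base case to the non-negative Borel function $f\circ S_\gt$ collapses the inner sum to $\int f\circ S_\gt\,dP$, and then the induction hypothesis collapses the outer sum to $\int f\,dP$, completing the induction.

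The only point requiring genuine care is the term-by-term integration against the infinite sum of measures together with the rearrangement of the resulting double series; both are legitimate precisely because $f$ is non-negative (Tonelli / monotone convergence), which is exactly why the hypothesis restricts to $f:\D R^2\to\D R^+$. Everything else is routine bookkeeping with the concatenation notation and the relations $p_\go=p_{\go^-}p_{\go_{|\go|}}$, $S_\go=S_{\go^-}\circ S_{\go_{|\go|}}$.
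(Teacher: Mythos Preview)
Your proof is correct and follows the same inductive strategy as the paper's one-line argument (iterate the defining relation $P=\sum p_{(i,j)}P\circ S_{(i,j)}^{-1}$ to level $k$ and integrate), only with the measure-theoretic details---Tonelli for the termwise integration over the infinite index set, and the change-of-variables formula---spelled out carefully. The paper simply asserts ``by induction $P=\sum_{\go\in I^k}p_\go P\circ S_\go^{-1}$'' and leaves the rest implicit, so your version is a faithful expansion rather than a different approach.
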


Let $S_{(i,j)}^{(1)}$ and $S_{(i,j)}^{(2)}$ be the horizontal and vertical components of the transformations $S_{(i,j)}.$ Then, for all $(x_1, x_2) \in \D R^2,$ we have
$S_{(i,j)}^{(1)}(x_1) =r^i x_1 +1-r^{i-1}$ and $S_{(i,j)}^{(2)}(x_2) =r^j x_2 +1-r^{j-1};$ hence, $S_{(i,j)}^{(1)}$ and $S_{(i,j)}^{(2)}$ are similarity mappings on $\D R$ with similarity ratios $s_{(i,j)}^{(1)}:=r^i$ and $s_{(i, j)}^{(2)}:=r^j , $ respectively. Similarly, for $\go=(i_1, j_1)(i_2, j_2)\cdots (i_k, j_k) \in \mathcal{I}^k$, $k\geq 1$, let $S_{\go}^{(1)}$ and $S_{\go}^{(2)}$ represent the horizontal and vertical components of the transformation $S_\go$ on $\D R^2$. Then,  $S_{\go}^{(1)}$ and $S_{\go}^{(2)}$ are similarity mappings on $\D R$ with similarity ratios $s_{\go}^{(1)}$ and $s_{\go}^{(2)},$ respectively, such that
$ S_{\go}^{(1)}=S_{(i_1, j_1)}^{(1)}\circ \cdots \circ S_{(i_k, j_k)}^{(1)}$  and $ S_{\go}^{(2)}=S_{(i_1, j_1)}^{(2)}\circ \cdots \circ S_{(i_k, j_k)}^{(2)}$ .
Thus, it follows that
\begin{align*}& s_{\go}^{(1)}=s_{(i_1, j_1)}^{(1)}s_{(i_2, j_2)}^{(1)}\cdots s_{(i_k, j_k)}^{(1)} =r^{i_1+i_2+\cdots+i_k} \te{ and }\\
&s_{\go}^{(2)}=s_{(i_1, j_1)}^{(2)}s_{(i_2, j_2)}^{(2)}\cdots s_{(i_k, j_k)}^{(2)} =r^{j_1+j_2+\cdots+j_k}.
\end{align*}
Moreover, we have $P(J_\go)=p_\go=p_{(i_1, j_1)}p_{(i_2, j_2)}\cdots p_{(i_k, j_k)}=\frac{1}{2^{i_1+i_2+\cdots+i_k+j_1+j_2+\cdots+j_k}}$.
Let $X:=(X_1, X_2)$ be a bivariate random variable with distribution $P$. Let $P_1, P_2$ be the marginal distributions of $P$, i.e., $P_1(A)=P(A\times \D R)=P\circ \pi_1^{-1} (A) $ for all $A \in \F B$, and $P_2(B)=P(\D R \times B)=P\circ \pi_2^{-1} (B)$ for all $B \in \F B$, where $\pi_1, \pi_2$ are projections given by $\pi_1(x_1, x_2)=x_1$ and $\pi_2 (x_1, x_2)=x_2$ for all $(x_1, x_2) \in \D R^2$. Here $\F B$ is the Borel $\gs$-algebra on $\D R$. Then, $X_1$ has distribution $P_1$ and $X_2$ has distribution $P_2$. Let $S_{(i, j)}^{-(1)}$ and $S_{(i, j)}^{-(2)}$ denote respectively the inverse images of the horizontal and vertical components of the transformations $S_{(i, j)}$ for all $i, j\in \D N$.
Then, the following lemma is known (\cite{H}, \cite{MaU}, \cite{M}):

\begin{lemma} \label{lemma2}  Let $P_1$ and $P_2$ be the marginal distributions of the probability measure $P$. Then,
\[P_1 =\sum_{i=1}^\infty \frac 1{2^i} P_1\circ S_{(i, j)}^{-(1)}  \te{  and } P_2 =\sum_{j=1}^\infty \frac 1{2^j} P_2\circ S_{(i, j)}^{-(2)}.\]
\end{lemma}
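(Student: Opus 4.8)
The plan is to push the defining self-referential identity $P=\sum_{i,j=1}^\infty p_{(i,j)}\,P\circ S_{(i,j)}^{-1}$ forward through the projections $\pi_1$ and $\pi_2$, using the fact that each $S_{(i,j)}$ acts on $\D R^2$ coordinate-wise and that its horizontal component $S_{(i,j)}^{(1)}(x_1)=\frac1{3^i}x_1+1-\frac1{3^{i-1}}$ depends only on $i$ (and its vertical component only on $j$). The redundant index is then eliminated by the geometric series $\sum_{j=1}^\infty 2^{-j}=1$ (respectively $\sum_{i=1}^\infty 2^{-i}=1$).

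Concretely, I would fix $A\in\F B$ and compute, from the definition $P_1(A)=P(A\times\D R)$ together with the identity for $P$,
\[
P_1(A)=P(A\times\D R)=\sum_{i,j=1}^\infty p_{(i,j)}\,P\big(S_{(i,j)}^{-1}(A\times\D R)\big).
\]
Since $S_{(i,j)}(x_1,x_2)=\big(S_{(i,j)}^{(1)}(x_1),S_{(i,j)}^{(2)}(x_2)\big)$ and $S_{(i,j)}^{(2)}$ is an affine bijection of $\D R$, one has $S_{(i,j)}^{-1}(A\times\D R)=S_{(i,j)}^{-(1)}(A)\times\D R$, so that $P\big(S_{(i,j)}^{-1}(A\times\D R)\big)=P_1\big(S_{(i,j)}^{-(1)}(A)\big)$. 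Plugging in $p_{(i,j)}=2^{-(i+j)}$ and using that $S_{(i,j)}^{-(1)}$ is independent of $j$, Tonelli's theorem (all terms are nonnegative) permits the rearrangement
\[
P_1(A)=\sum_{i,j=1}^\infty\frac{1}{2^{i+j}}\,P_1\big(S_{(i,j)}^{-(1)}(A)\big)
=\sum_{i=1}^\infty\frac{1}{2^i}\Big(\sum_{j=1}^\infty\frac{1}{2^j}\Big)P_1\big(S_{(i,j)}^{-(1)}(A)\big)
=\sum_{i=1}^\infty\frac{1}{2^i}\,P_1\circ S_{(i,j)}^{-(1)}(A),
\]
which is the asserted formula (the index $j$ on the right being immaterial). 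The statement for $P_2$ is obtained by the symmetric computation, interchanging the two coordinates and summing out $i$ via $\sum_{i=1}^\infty 2^{-i}=1$.

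There is essentially no genuine obstacle here; the argument is bookkeeping. The only two points that deserve a sentence of care are the coordinate-wise factorization $S_{(i,j)}^{-1}(A\times\D R)=S_{(i,j)}^{-(1)}(A)\times\D R$ (immediate because $S_{(i,j)}$ is a product map and $S_{(i,j)}^{(2)}$ is onto $\D R$), and the legitimacy of regrouping the double series, which is automatic since $P$ is a positive measure so all summands are nonnegative. Equivalently, one may phrase the whole proof at the level of pushforwards: $P_1=P\circ\pi_1^{-1}$ and $\pi_1\circ S_{(i,j)}=S_{(i,j)}^{(1)}\circ\pi_1$, hence $P_1=\sum_{i,j}2^{-(i+j)}P_1\circ S_{(i,j)}^{-(1)}$, and then collapse the $j$-sum.
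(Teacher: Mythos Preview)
Your argument is correct. The paper does not actually supply a proof of this lemma; it merely states the result as ``known'' and cites \cite{H}, \cite{MaU}, \cite{M}. Your direct computation---pushing the self-referential identity for $P$ through the projection $\pi_1$, using the product structure $S_{(i,j)}^{-1}(A\times\D R)=S_{(i,j)}^{-(1)}(A)\times\D R$, and collapsing the redundant $j$-sum via $\sum_{j\geq 1}2^{-j}=1$---is exactly the standard verification one would expect, and it fills in what the paper leaves to references.
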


\begin{remark} \label{rem1} Since $S_{(i, j)}^{(1)}$ and $S_{(i, j)}^{(2)}$ are similarity mappings, from Lemma~\ref{lemma2}, one can see that both the marginal distributions $P_1$ and $P_2$ are self-similar measures on $\D R$ generated by an infinite collection of similarities associated with the probability vector $(\frac 1 2, \frac 1 {2^2}, \cdots)$.

\end{remark}

\begin{lemma} \label{lemma3} Let $E(X)$ and $V(X)$ denote the expectation and the variance of the random variable $X$. Then, \[E(X)=(E(X_1), \, E(X_2))=(\frac 12, \frac 12) \te{ and } V:=V(X)=E\|X-(\frac 12, \frac  12 )\|^2=\frac {1}{4}.\]
\end{lemma}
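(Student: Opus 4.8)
The plan is to compute the two marginal expectations separately using the self-similarity of $P_1$ and $P_2$ described in Lemma~\ref{lemma2} and Remark~\ref{rem1}, and then use independence of the coordinates (which follows from the product structure foreshadowed in the introduction, or more directly from the fact that $p_{(i,j)}=\frac{1}{2^i}\cdot\frac{1}{2^j}$ factors) to reduce the variance to $V(X_1)+V(X_2)$. First I would set $a:=E(X_1)$. Since $P_1=\sum_{i=1}^\infty \frac{1}{2^i}P_1\circ T_i^{-1}$ with $T_i(x)=\frac{1}{3^i}x+1-\frac{1}{3^{i-1}}$, applying Lemma~\ref{lemma1} (in its one-dimensional marginal form) to $f(x)=x$ gives
\begin{align*}
a=E(X_1)=\sum_{i=1}^\infty \frac{1}{2^i}E(T_i(X_1))=\sum_{i=1}^\infty \frac{1}{2^i}\Big(\frac{1}{3^i}a+1-\frac{1}{3^{i-1}}\Big).
\end{align*}
Summing the geometric series $\sum \frac{1}{2^i}=1$, $\sum\frac{1}{6^i}=\frac{1}{5}$, and $\sum\frac{1}{2^i}\cdot\frac{3}{3^i}=\frac{3}{5}$ turns this into $a=\frac{a}{5}+1-\frac{3}{5}=\frac{a}{5}+\frac{2}{5}$, hence $a=\frac12$. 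By the identical computation $E(X_2)=\frac12$, so $E(X)=(\frac12,\frac12)$.

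Next I would compute $V(X_1)=E[(X_1-\frac12)^2]$ by the same renormalization trick. Writing $\beta:=V(X_1)$ and using that each $T_i$ is a similitude with ratio $\frac{1}{3^i}$ centered so that $T_i(\frac12)=\frac{1}{3^i}\cdot\frac12+1-\frac{1}{3^{i-1}}$, one gets
\begin{align*}
\beta=\sum_{i=1}^\infty \frac{1}{2^i}E\Big[(T_i(X_1)-\tfrac12)^2\Big]=\sum_{i=1}^\infty \frac{1}{2^i}\Big[\frac{1}{9^i}\beta+\big(T_i(\tfrac12)-\tfrac12\big)^2\Big],
\end{align*}
since the cross term vanishes because $E(X_1-\frac12)=0$. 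The coefficient $\sum\frac{1}{2^i}\frac{1}{9^i}=\sum\frac{1}{18^i}=\frac{1}{17}$, and the constant term $\sum_{i}\frac{1}{2^i}(T_i(\frac12)-\frac12)^2$ is a convergent explicit series in powers of $\frac16$ and $\frac{1}{18}$; call its value $c$. Then $\beta=\frac{\beta}{17}+c$, so $\beta=\frac{17c}{16}$, and a short arithmetic check should yield $\beta=\frac18$. By symmetry $V(X_2)=\frac18$ as well.

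Finally, since $X_1$ and $X_2$ are independent — because $P(J_{(i_1,j_1)\cdots(i_k,j_k)})=\prod\frac{1}{2^{i_\ell}}\cdot\prod\frac{1}{2^{j_\ell}}$ shows the cylinder probabilities factor, so $P=P_1\otimes P_2$ — we have $\mathrm{Cov}(X_1,X_2)=0$ and hence
\begin{align*}
V(X)=E\|X-(\tfrac12,\tfrac12)\|^2=E[(X_1-\tfrac12)^2]+E[(X_2-\tfrac12)^2]=V(X_1)+V(X_2)=\tfrac18+\tfrac18=\tfrac14.
\end{align*}
The only mildly delicate point is the bookkeeping of the infinite geometric sums defining $c$ (and justifying their interchange with the expectation via monotone convergence, using that $f\ge 0$ as in Lemma~\ref{lemma1}); everything else is a routine fixed-point computation. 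An alternative, essentially equivalent, route is to invoke directly the known formulas for the marginal self-similar measure $P_1$ from \cite{R2}, where $E(X_1)=\frac12$ and $V(X_1)=\frac18$ are already recorded, and then only the independence reduction remains.
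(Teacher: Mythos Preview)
Your proof is correct, and the computations check out (in particular $c=\frac{2}{17}$, giving $\beta=\frac{1}{8}$). The paper's own proof is much terser: it simply observes that $P_1=P_2$ is the infinite self-similar measure $\mu=\sum_k \frac{1}{2^k}\mu\circ T_k^{-1}$ and then cites the marginal values $E(X_1)=\frac12$, $V(X_1)=\frac18$ from an external reference, whereas you derive them from scratch via the fixed-point equations. Your route is more self-contained; the paper's is shorter but relies on prior literature---and you even flag this citation-based alternative at the end of your argument.

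One small point: your appeal to independence in the last step is unnecessary. The identity
\[
E\|X-(\tfrac12,\tfrac12)\|^2 = E(X_1-\tfrac12)^2 + E(X_2-\tfrac12)^2
\]
is just linearity of expectation applied to $\|x-\mu\|^2=(x_1-\mu_1)^2+(x_2-\mu_2)^2$; no covariance term appears, so no independence is required. The paper makes no mention of independence here for that reason. (Independence is true, and is established later as Proposition~\ref{prop10}, but it is not needed for this lemma.)
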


\begin{proof} By Lemma~\ref{lemma2},
$P_1=P_2=\mu$, where $\mu$ is a unique Borel probability measure on $\D R$ such that
\[\mu=\sum_{k=1}^\infty \frac 1{2^k} \mu \circ S_{(k,j)}^{-(1)}=
\sum_{k=1}^\infty \frac 1{2^k} \mu \circ S_{(i,k)}^{-(2)}. \]
Hence, $X_1=X_2$,  and by \cite[Lemma~2.2]{R1},
$E(X_1)=E(X_2)=\frac 12, $ and $ V(X_1)=V(X_2)=\frac {1}{8},$
which implies that
$ E\|X-(\frac 12, \frac 12)\|^2=E(X_1-\frac 12)^2 +E(X_2-\frac 12)^2=V(X_1)+V(X_2)=\frac {1}{4}. $
\end{proof}

\begin{remark} Using the standard rule of probability, for any $(a, b) \in \D R^2$, we have $ E\|X-(a, b)\|^2=V+\|(a, b)-(\frac 12, \frac 12)\|^2$, which yields that the optimal set of one-mean consists of the expected value and the corresponding quantization error is the variance $V$ of the random variable $X$.
\end{remark}

\begin{lemma} \label{lemma4}
Let $\go \in \mathcal{I}^\ast$. Then,

$(i)$ $ E(X | X \in J_{\go(\infty, \infty)})=S_{\go^-(\go^{(1)}_{|\go|}+1,\,  \go^{(2)}_{|\go|}+1)}(\frac 12, \frac 12)+
(s_\go^{(1)} \frac{1}{2}(1-r), s_\go^{(2)}\frac 12 (1-r) );$

$(ii)$ $E(X | X \in J_{\go(\es, \infty)})=S_{\go^-(\go^{(1)}_{|\go|},\,  \go^{(2)}_{|\go|}+1)}(\frac 12, \frac 12)+(0,\,
s_\go^{(2)}\frac 12 (1-r) ),$ and

$(iii)$ $E(X | X \in J_{\go(\infty, \es)})=S_{\go^-(\go^{(1)}_{|\go|}+1,\,  \go^{(2)}_{|\go|})}(\frac 12, \frac 12)+(s_\go^{(1)} \frac{1}{2}(1-r), \, 0).$

\end{lemma}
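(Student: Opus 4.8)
All three formulas will be obtained from one computation; I describe the argument for $(i)$ and then indicate the parallel modifications for $(ii)$ and $(iii)$. Throughout write $\gt:=\go^-$ and abbreviate $a:=\go^{(1)}_{|\go|}$, $b:=\go^{(2)}_{|\go|}$, so that $\go=\gt(a,b)$ and, by definition, $J_{\go(\infty,\infty)}=\UU_{i,j\in\D N}J_{\gt(a+i,\,b+j)}$ is a countable disjoint union of basic rectangles; the disjointness holds because the first–level rectangles $T_i([0,1])\ti T_j([0,1])$ are pairwise disjoint, hence by induction on word length so are all the $J_\zeta$, $\zeta\in I^k$. The plan has four steps: (1) reduce $E(X\mid X\in J_{\go(\infty,\infty)})$ to a $p$–weighted average of the conditional expectations over the individual cylinders $J_{\gt(a+i,\,b+j)}$; (2) evaluate each such conditional expectation through the affine self–similarity of $P$; (3) pull the common prefix $S_\gt$ out of the resulting doubly–infinite sum and evaluate the remaining geometric series; (4) match the outcome with the asserted closed form.

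For step (2) I would first record the elementary identity $E(X\mid X\in J_\eta)=S_\eta(\frac12,\frac12)$, valid for every $\eta\in I^\ast$: by Lemma~\ref{lemma1} (equivalently $P=\sum_{\zeta\in I^{|\eta|}}p_\zeta\,P\circ S_\zeta^{-1}$), the disjointness of $\{J_\zeta:\zeta\in I^{|\eta|}\}$, and $\mathrm{supp}(P)\ci[0,1]\ti[0,1]$, the conditional distribution of $X$ given $X\in J_\eta$ is the push–forward $P\circ S_\eta^{-1}$; since $S_\eta$ is affine and $E(X)=(\frac12,\frac12)$ by Lemma~\ref{lemma3}, this gives $E(X\mid X\in J_\eta)=\int S_\eta\,dP=S_\eta(E(X))=S_\eta(\frac12,\frac12)$. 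The same disjointness yields $P(J_{\gt(a+i,\,b+j)})=p_{\gt(a+i,\,b+j)}=p_\gt\,2^{-(a+i+b+j)}$, and these sum over $i,j$ to $p_\gt\,2^{-(a+b)}=p_\go=P(J_{\go(\infty,\infty)})$ (as recorded in Section~2). Step (1) then gives
\[E(X\mid X\in J_{\go(\infty,\infty)})=\frac1{p_\go}\sum_{i,j=1}^\infty p_{\gt(a+i,\,b+j)}\,S_{\gt(a+i,\,b+j)}\Big(\frac12,\frac12\Big)=\sum_{i,j=1}^\infty\frac1{2^{i+j}}\,S_{\gt(a+i,\,b+j)}\Big(\frac12,\frac12\Big).\]

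For steps (3)–(4): writing $S_{\gt(a+i,\,b+j)}=S_\gt\circ S_{(a+i,\,b+j)}$ and using $S_{(m,n)}^{(1)}=T_m$, $S_{(m,n)}^{(2)}=T_n$, one has $S_{\gt(a+i,\,b+j)}(\frac12,\frac12)=S_\gt\big(T_{a+i}(\frac12),\,T_{b+j}(\frac12)\big)$. Since $S_\gt$ acts coordinatewise by the affine maps $S_\gt^{(1)},S_\gt^{(2)}$ and the weights $2^{-(i+j)}$ sum to $1$, the sum collapses to $S_\gt\big(\sum_{i\ge1}2^{-i}T_{a+i}(\frac12),\ \sum_{j\ge1}2^{-j}T_{b+j}(\frac12)\big)$. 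A short computation using $T_k(\frac12)=1-\frac5{2\cdot3^k}$ — split off the constant term $1$, leaving a geometric series of ratio $\frac16$ — gives $\sum_{i\ge1}2^{-i}T_{a+i}(\frac12)=1-\frac1{2\cdot3^a}$, and similarly with $b$; thus $E(X\mid X\in J_{\go(\infty,\infty)})=S_\gt\big(1-\frac1{2\cdot3^a},\,1-\frac1{2\cdot3^b}\big)$. Finally, because $S_{\gt(a+1,\,b+1)}^{(1)}=S_\gt^{(1)}\circ T_{a+1}$ and $s_{\gt(a+1,\,b+1)}^{(1)}=s_\gt^{(1)}\cdot3^{-(a+1)}$, the affine relation $S_\gt^{(1)}(u)+s_\gt^{(1)}v=S_\gt^{(1)}(u+v)$ together with $T_{a+1}(\frac12)+3^{-(a+1)}=1-\frac1{2\cdot3^a}$ identifies the first coordinate of the last vector with $S_{\gt(a+1,\,b+1)}^{(1)}(\frac12)+s_{\gt(a+1,\,b+1)}^{(1)}$; the second coordinate goes the same way, and this is exactly $(i)$. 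For $(ii)$ one replaces $J_{\gt(a+i,\,b+j)}$ by $J_{\gt(a,\,b+j)}$, so only the $j$–sum appears: the first coordinate stays $S_\gt^{(1)}(T_a(\frac12))$ and the second becomes $S_\gt^{(2)}(1-\frac1{2\cdot3^b})$, which is the stated formula; $(iii)$ is $(ii)$ with the two coordinates interchanged.

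The analytic content is minimal; the only genuine ingredients are the identification of the conditional law on a cylinder $J_\eta$ with the affine push–forward $P\circ S_\eta^{-1}$ (resting on disjointness of the basic rectangles) and the one-line geometric sum $\sum_{i\ge1}2^{-i}T_{a+i}(\frac12)=1-\frac1{2\cdot3^a}$. I expect the main obstacle to be organisational rather than mathematical: tracking the prefix $\go^-$ and the shifted final letters correctly while pulling $S_\gt$ out of the doubly–infinite sums, so that the somewhat forbidding expressions on the right of $(i)$–$(iii)$ fall into place.
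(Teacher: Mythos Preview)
Your proof is correct and follows essentially the same approach as the paper: both reduce $E(X\mid X\in J_{\go(\infty,\infty)})$ to the weighted sum $\sum_{i,j\ge1}2^{-(i+j)}S_{\gt(a+i,\,b+j)}(\tfrac12,\tfrac12)$ via the identification of the conditional law on a cylinder with the affine push-forward, and then evaluate that sum using the same geometric series $\sum_{i\ge1}2^{-i}T_{a+i}(\tfrac12)$. The only difference is organizational: the paper subtracts the reference term $S_{\gt(a+1,\,b+1)}(\tfrac12,\tfrac12)$ first and sums the correction $\big(s_\go^{(1)}(\tfrac56-\tfrac52 3^{-i}),\,s_\go^{(2)}(\tfrac56-\tfrac52 3^{-j})\big)$, whereas you pull the outer affine map $S_\gt$ out of the sum directly---a slightly cleaner packaging of the same computation.
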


\begin{proof} First prove $(i)$.  Since $P(J_{\go(\infty, \infty)})=p_{\go(\infty, \infty)}=p_\go$ and $p_{\go^-(\go^{(1)}_{|\go|}+i,\,  \go^{(2)}_{|\go|}+j)}=p_\go \frac 1{2^{i+j}},$
\begin{align*} &E(X | X \in J_{\go(\infty, \infty)})=E(X |X \in \mathop{\uu}\limits_{i, j=1}^\infty J_{\go^-(\go^{(1)}_{|\go|}+i,\,  \go^{(2)}_{|\go|}+j)})\\
&=\frac{1}{P(J_{\go(\infty, \infty)})}\sum_{i, j=1}^\infty p_{\go^-(\go^{(1)}_{|\go|}+i,\,  \go^{(2)}_{|\go|}+j)} S_{\go^-(\go^{(1)}_{|\go|}+i,\,  \go^{(2)}_{|\go|}+j)}(\frac 12, \frac 12)=\sum_{i, j=1}^\infty \frac 1{2^{i+j}}S_{\go^-(\go^{(1)}_{|\go|}+i,\,  \go^{(2)}_{|\go|}+j)}(\frac 12, \frac 12).
\end{align*}
Notice that
\begin{align*}
&S_{\go^-(\go^{(1)}_{|\go|}+i,\,  \go^{(2)}_{|\go|}+j)}(\frac 12, \frac 12)-S_{\go^-(\go^{(1)}_{|\go|}+1,\,  \go^{(2)}_{|\go|}+1)}(\frac 12, \frac 12)\\
&=\Big(S^{(1)}_{\go^-(\go^{(1)}_{|\go|}+i,\,  \go^{(2)}_{|\go|}+j)}(\frac 12), \, S^{(2)}_{\go^-(\go^{(1)}_{|\go|}+i,\,  \go^{(2)}_{|\go|}+j)}(\frac 12)\Big)-\Big(S^{(1)}_{\go^-(\go^{(1)}_{|\go|}+1,\,  \go^{(2)}_{|\go|}+1)}(\frac 12), S^{(2)}_{\go^-(\go^{(1)}_{|\go|}+1,\,  \go^{(2)}_{|\go|}+1)}(\frac 12)\Big)\notag\\
&=\Big(S^{(1)}_{\go^-(\go^{(1)}_{|\go|}+i,\,  \go^{(2)}_{|\go|}+j)}(\frac 12) -S^{(1)}_{\go^-(\go^{(1)}_{|\go|}+1,\,  \go^{(2)}_{|\go|}+1)}(\frac 12), \, S^{(2)}_{\go^-(\go^{(1)}_{|\go|}+i,\,  \go^{(2)}_{|\go|}+j)}(\frac 12) -S^{(2)}_{\go^-(\go^{(1)}_{|\go|}+1,\,  \go^{(2)}_{|\go|}+1)}(\frac 12) \Big ).\notag
\end{align*}
Since
\begin{align*}
&S^{(1)}_{\go^-(\go^{(1)}_{|\go|}+i,\,  \go^{(2)}_{|\go|}+j)}(\frac 12) -S^{(1)}_{\go^-(\go^{(1)}_{|\go|}+1,\,  \go^{(2)}_{|\go|}+1)}(\frac 12)=s_{\go^-}^{(1)} \Big(S^{(1)}_{(\go^{(1)}_{|\go|}+i,\,  \go^{(2)}_{|\go|}+j)}(\frac 12) -S^{(1)}_{(\go^{(1)}_{|\go|}+1,\,  \go^{(2)}_{|\go|}+1)}(\frac 12)\Big)\\
&=s_{\go^-}^{(1)} \Big(r^{\go^{(1)}_{|\go|}+i} (\frac 12) -r^{\go^{(1)}_{|\go|}+i-1}-r^{\go^{(1)}_{|\go|}+1} (\frac 12)+r^{\go^{(1)}_{|\go|}+1-1}\Big)=s_\go^{(1)}\Big(\frac 12 r^i-r^{i-1}-\frac{r}{2}+1 \Big)\\
&=s_\go^{(1)}(1-\frac{r}{2})(1- r^{i-1}), \te{ and similarly}
\end{align*}
$S^{(2)}_{\go^-(\go^{(1)}_{|\go|}+i,\,  \go^{(2)}_{|\go|}+j)}(\frac 12) -S^{(2)}_{\go^-(\go^{(1)}_{|\go|}+1,\,  \go^{(2)}_{|\go|}+1)}(\frac 12)=s_\go^{(2)}(1-\frac{r}{2})(1- r^{j-1}).$ Hence, we have that\\
$S_{\go^-(\go^{(1)}_{|\go|}+i,\,  \go^{(2)}_{|\go|}+j)}(\frac 12, \frac 12)=S_{\go^-(\go^{(1)}_{|\go|}+1,\,  \go^{(2)}_{|\go|}+1)}(\frac 12, \frac 12)+(s_\go^{(1)}(u) ), s_\go^{(2)}(v)),$
where $u=(\frac{2-r}{2})(1- r^{i-1}) $ and $ v=(\frac{2-r}{2})(1- r^{j-1}).$  Therefore,
\begin{align*} &E(X | X \in J_{\go(\infty, \infty)})= S_{\go^-(\go^{(1)}_{|\go|}+1,\,  \go^{(2)}_{|\go|}+1)}(\frac 12, \frac 12)+\sum_{i, j=1}^\infty \frac 1{2^{i+j}}(s_\go^{(1)}(u), s_\go^{(2)}(v)) \\
&=S_{\go^-(\go^{(1)}_{|\go|}+1,\,  \go^{(2)}_{|\go|}+1)}(\frac 12, \frac 12)+(s_\go^{(1)} (\frac{1-r}{2}), s_\go^{(2)} (\frac{1-r}{2}) ).
\end{align*}
Proofs of (ii) and (iii) are similar.
\end{proof}

\begin{note} \label{note1}

For words $\gb, \gg, \cdots, \gd$ in $\mathcal{I}^\ast$, by $a(\gb, \gg, \cdots, \gd)$ we denote the conditional expectation of the random variable $X$ given $J_\gb\uu J_\gg \uu\cdots \uu J_\gd,$ i.e.,
\begin{equation}  \label{eq00} a(\gb, \gg, \cdots, \gd)=E(X|X\in J_\gb \uu J_\gg \uu \cdots \uu J_\gd)=\frac{1}{P(J_\gb\uu \cdots \uu J_\gd)}\int_{J_\gb\uu \cdots \uu J_\gd} (x_1, x_2) dP.
\end{equation}
Then, for $\go\in \mathcal{I}^\ast$,
\begin{align} \label{eq1} \left\{ \begin{array}{ll} a(\go)=S_\go(E(X))=S_\go(\frac 12, \frac 12), & \\  a(\go(\es, \infty))=E(X | X \in J_{\go(\es, \infty)}),& \\
a(\go(\infty, \es))=E(X | X \in J_{\go(\infty, \es)}), \te{ and } & \\ a(\go (\infty, \infty))=E(X | X \in J_{\go(\infty, \infty)}).&
 \end{array} \right.
 \end{align}
 Thus, by Lemma~\ref{lemma4}, if $\go=(1,1)$, then $a((1,1))=(\frac{r}{2}, \frac{r}{2})$, $a((1, 1)(\infty, \es))=(1-\frac{r}{2}, \frac{r}{2})$, $a((1, 1)(\es, \infty))=(\frac{r}{2},1- \frac{r}{2})$, and $a((1, 1)(\infty, \infty))=(1- \frac{r}{2},1- \frac{r}{2} )$. In addition,
 \begin{align} \label{eq32} \left\{\begin{array} {ll}
 a((1,1), (1, 1)(\infty, \es))=(\frac 12, \frac{r}{2}),  & \\ a((1, 1)(\es, \infty), (1, 1)(\infty, \infty))=(\frac 12, 1- \frac{r}{2}), & \\
 a((1,1), (1, 1)(\es, \infty))=(\frac{r}{2}, \frac 12),  & \\  a((1, 1)(\infty, \es), (1, 1)(\infty, \infty))=(1- \frac{r}{2}, \frac 12).
 \end{array}\right.
 \end{align}
Moreover, for $\go \in \mathcal{I}^k$, $k\geq 1$, it is easy to see that
\begin{align} \label{eq2}  &\int_{J_\go}\|x-(a, b)\|^2 dP= p_\go \int\|(x_1, x_2) -(a, b)\|^2 dP\circ S_\go^{-1}\\
&=p_\go \Big(s_{\go}^{(1)2} V(X_1)+{s_{\go}^{(2)2}} V(X_2)+\|S_\go(\frac 12, \frac 12)-(a, b)\|^2\Big), \notag
\end{align}
where $s_{\go}^{(k)2}:=(s_{\go}^{(k)})^2$ for $k=1, 2$.
The expressions \eqref{eq1} and \eqref{eq2}  are useful to obtain the optimal sets and the corresponding quantization errors with respect to the probability distribution $P$.
\end{note}

For the rest of the article $r=\frac{1}{3} $ is assumed, which is the most important case due to its intimate connection with the standard Cantor system.

\section{Optimal sets of $n$-means for $n=2, 3$}

In the this section, we determine the optimal sets of two- and three-means, and their quantization errors.

\begin{lemma} \label{lemma5} Let $P$ be the affine measure on $\D R^2$ and let $\go \in \mathcal{I}^\ast$. Then,
\begin{align*}
 &\int_{J_{\go(\infty, \infty)}} \|x-a(\go(\infty, \infty))\|^2  dP= \int_{J_{\go(\es, \infty)}} \|x-a(\go(\es, \infty))\|^2  dP\\
 &=\int_{J_{\go(\infty, \es)}} \|x-a(\go(\infty, \es))\|^2  dP=\int_{J_\go} \|x-a(\go)\|^2 dP=p_\go(s_{\go}^{(1)2}+{s_{\go}^{(2)2}})\frac 18.
\end{align*}
\end{lemma}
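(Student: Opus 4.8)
The plan is to treat the four integrals separately, reducing each one to the one-dimensional variance $\tfrac18$ of the common marginal $\mu=P_1=P_2$ found in Lemma~\ref{lemma3}. The term $\int_{J_\go}\|x-a(\go)\|^2\,dP$ is the easy one: apply \eqref{eq2} with $(a,b)=a(\go)=S_\go(\tfrac12,\tfrac12)$, so that the summand $\|S_\go(\tfrac12,\tfrac12)-(a,b)\|^2$ vanishes and, since $V(X_1)=V(X_2)=\tfrac18$ by Lemma~\ref{lemma3}, what remains is exactly $p_\go(s_\go^{(1)2}+s_\go^{(2)2})\tfrac18$.

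For the three ``tail'' regions the starting point is that the cost splits over the two coordinates with no independence needed: if $R$ denotes one of the regions and $a=(a_1,a_2)$ its centroid, then $a_k=E(X_k\mid X\in R)$ by \eqref{eq1}, so $\int_R\|x-a\|^2\,dP=P(R)\big(\mathrm{Var}(X_1\mid X\in R)+\mathrm{Var}(X_2\mid X\in R)\big)$, and it suffices to identify the conditional law of each coordinate. Consider $R=J_{\go(\es,\infty)}=\bigcup_{j\ge1}J_{\go^-(\go^{(1)}_{|\go|},\,\go^{(2)}_{|\go|}+j)}$, and write $\gt=\go^-$, $p=\go^{(1)}_{|\go|}$, $q=\go^{(2)}_{|\go|}$, so $\go=\gt(p,q)$, $s_\go^{(1)}=s_\gt^{(1)}3^{-p}$ and $s_\go^{(2)}=s_\gt^{(2)}3^{-q}$. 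Since the conditional law of $X$ given $X\in J_\gh$ is $P\circ S_\gh^{-1}$, the conditional law of $X_1$ given $X\in J_{\gt(p,q+j)}$ is $(S_\gt^{(1)}\circ T_p)_\ast P_1$, which is independent of $j$, while that of $X_2$ is $(S_\gt^{(2)}\circ T_{q+j})_\ast P_2$; as $P(J_{\gt(p,q+j)})/p_\go=2^{-j}$, averaging over $j$ leaves the $X_1$-law unchanged and turns the $X_2$-law into $(S_\gt^{(2)})_\ast\big(\sum_{j\ge1}2^{-j}(T_{q+j})_\ast P_2\big)$.

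The computation then hinges on the composition identity $T_{q+j}=U_2^{\,q}\circ T_j$, where $U_2(x)=\tfrac13 x+\tfrac23$; this follows by verifying $U_2\circ T_k=T_{k+1}$ straight from the formulas for the $T_k$ and iterating. Combined with the self-similarity $P_2=\sum_{j\ge1}2^{-j}(T_j)_\ast P_2$ of Lemma~\ref{lemma2}, it gives $\sum_{j\ge1}2^{-j}(T_{q+j})_\ast P_2=(U_2^{\,q})_\ast P_2$, so the conditional law of $X_2$ on $R$ is $(S_\gt^{(2)}\circ U_2^{\,q})_\ast P_2$. Now $S_\gt^{(1)}\circ T_p$ and $S_\gt^{(2)}\circ U_2^{\,q}$ are similarities of ratios $s_\gt^{(1)}3^{-p}=s_\go^{(1)}$ and $s_\gt^{(2)}3^{-q}=s_\go^{(2)}$, so Lemma~\ref{lemma3} yields $\mathrm{Var}(X_1\mid X\in R)=s_\go^{(1)2}\tfrac18$ and $\mathrm{Var}(X_2\mid X\in R)=s_\go^{(2)2}\tfrac18$; multiplying by $P(R)=p_\go$ gives the asserted value. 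The region $J_{\go(\infty,\es)}$ is the mirror image of this under interchanging the two coordinates, and $J_{\go(\infty,\infty)}$ is handled by running the argument in both coordinates at once, with $2^{-(i+j)}$-weighted averages and the identities $T_{p+i}=U_2^{\,p}\circ T_i$ and $T_{q+j}=U_2^{\,q}\circ T_j$. The only real difficulty is the index bookkeeping in the $\go^-(\,\cdot\,)$ notation — in particular keeping straight that the first-coordinate data of $J_{\gt(p,q+j)}$ does not depend on $j$ while the second-coordinate data is the one being summed; once $U_2\circ T_k=T_{k+1}$ is in hand the content is elementary.
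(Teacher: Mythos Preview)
Your argument is correct, and it takes a genuinely different route from the paper's. The paper computes $\int_{J_{\go(\infty,\infty)}}\|x-a(\go(\infty,\infty))\|^2\,dP$ by brute force: it plugs in the explicit centroid from Lemma~\ref{lemma4}, expands the squared norm coordinatewise, performs a bias--variance split of the form $\big(S^{(1)}_{\cdots}(x_1)-c\big)^2=\big((S^{(1)}_{\cdots}(x_1)-S^{(1)}_{\cdots}(\tfrac12))+(S^{(1)}_{\cdots}(\tfrac12)-c)\big)^2$, and then sums the resulting double series $\sum_{i,j}2^{-(i+j)}\big(s_\go^{(1)2}(\tfrac{1}{9^i}\tfrac18+\tfrac14(1-\tfrac{5}{3^i})^2)+\cdots\big)$ explicitly. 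Your approach instead identifies the conditional \emph{distribution} of each coordinate on the tail region as a similarity pushforward of the marginal $\mu$ itself, via the composition identity $U_2\circ T_k=T_{k+1}$ together with the self-similarity equation $\mu=\sum_j 2^{-j}(T_j)_\ast\mu$; the variance then drops out by scaling without any series manipulation. What this buys you is a cleaner, more structural proof that explains \emph{why} the four integrals coincide: each tail region is, coordinatewise, an affine image of the full marginal with the same contraction ratio $s_\go^{(r)}$, so all four conditional variances agree. The paper's computation, by contrast, never makes this identification explicit and arrives at the common value only after the dust settles; on the other hand it is entirely self-contained within Section~2 and does not anticipate the map $U_2$ that the paper introduces only later.
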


\begin{proof} Let us first prove $\int_{J_{\go(\infty, \infty)}} \|x-a(\go(\infty, \infty))\|^2  dP=p_\go(s_{\go}^{(1)2}+{s_{\go}^{(2)2}})\frac 18$.
By Lemma~\ref{lemma4}, we have
\begin{align} \label{eq34}
&\int_{J_{\go(\infty, \infty)}} \|x-a(\go(\infty, \infty))\|^2  dP=\sum_{i, j=1}^\infty \int_{J_{\go^-(\go^{(1)}_{|\go|}+i, \, \go^{(2)}_{|\go|}+j)}} \|x-a(\go(\infty, \infty))\|^2  dP \\
&=p_\go\sum_{i, j=1}^\infty\frac 1{2^{i+j}} \int  \|S_{\go^-(\go^{(1)}_{|\go|}+i, \, \go^{(2)}_{|\go|}+j)}(x_1, x_2) -S_{\go^-(\go^{(1)}_{|\go|}+1,\,  \go^{(2)}_{|\go|}+1)}(\frac 12, \frac 12)\notag\\
&\qquad \qquad \qquad -(s^{(1)}_{\go^-(\go^{(1)}_{|\go|}+1,\,  \go^{(2)}_{|\go|}+1)},\, s^{(2)}_{\go^-(\go^{(1)}_{|\go|}+1,\,  \go^{(2)}_{|\go|}+1)})\|^2  dP.\notag
\end{align}
Note that $S_{\go^-(\go^{(1)}_{|\go|}+i, \, \go^{(2)}_{|\go|}+j)}(x_1, x_2)=\Big(S^{(1)}_{\go^-(\go^{(1)}_{|\go|}+i, \, \go^{(2)}_{|\go|}+j)}(x_1), \, S^{(2)}_{\go^-(\go^{(1)}_{|\go|}+i, \, \go^{(2)}_{|\go|}+j)}(x_2)\Big)$ and

 $S_{\go^-(\go^{(1)}_{|\go|}+1, \, \go^{(2)}_{|\go|}+1)}(\frac 12, \frac 12)=\Big(S^{(1)}_{\go^-(\go^{(1)}_{|\go|}+1, \, \go^{(2)}_{|\go|}+1)}(\frac 12), \, S^{(2)}_{\go^-(\go^{(1)}_{|\go|}+1, \, \go^{(2)}_{|\go|}+1)}(\frac 12)\Big)$. Moreover, we have
 \begin{align*}
 &\Big(S^{(1)}_{\go^-(\go^{(1)}_{|\go|}+i, \, \go^{(2)}_{|\go|}+j)}(x_1)-S^{(1)}_{\go^-(\go^{(1)}_{|\go|}+1, \, \go^{(2)}_{|\go|}+1)}(\frac 12)-s^{(1)}_{\go^-(\go^{(1)}_{|\go|}+1,\,  \go^{(2)}_{|\go|}+1)}\Big)^2\\
 &=s_{\go^-}^{(1)2}\Big(S^{(1)}_{(\go^{(1)}_{|\go|}+i, \, \go^{(2)}_{|\go|}+j)}(x_1)-S^{(1)}_{(\go^{(1)}_{|\go|}+1, \, \go^{(2)}_{|\go|}+1)}(\frac 12)-s^{(1)}_{(\go^{(1)}_{|\go|}+1,\,  \go^{(2)}_{|\go|}+1)}\Big)^2\\
 &=s_{\go^-}^{(1)2}\Big(\Big(S^{(1)}_{(\go^{(1)}_{|\go|}+i, \, \go^{(2)}_{|\go|}+j)}(x_1)-S^{(1)}_{(\go^{(1)}_{|\go|}+i, \, \go^{(2)}_{|\go|}+j)}(\frac 12)\Big)+\Big(S^{(1)}_{(\go^{(1)}_{|\go|}+i, \, \go^{(2)}_{|\go|}+j)}(\frac 12)-S^{(1)}_{(\go^{(1)}_{|\go|}+1, \, \go^{(2)}_{|\go|}+1)}(\frac 12)\\
 &\qquad \qquad \qquad -s^{(1)}_{(\go^{(1)}_{|\go|}+1,\,  \go^{(2)}_{|\go|}+1)}\Big)\Big)^2.
 \end{align*}
 Now break the above expression using the square formula and note the fact that
\begin{align*}
&\int \Big(S^{(1)}_{(\go^{(1)}_{|\go|}+i, \, \go^{(2)}_{|\go|}+j)}(x_1)-S^{(1)}_{(\go^{(1)}_{|\go|}+i, \, \go^{(2)}_{|\go|}+j)}(\frac 12)\Big)^2 dP_1=s^{(1)2}_{(\go^{(1)}_{|\go|}+i, \, \go^{(2)}_{|\go|}+j)}V(X_1)=s^{(1)2}_{(\go^{(1)}_{|\go|}, \, \go^{(2)}_{|\go|})}\frac 1{9^i}\frac 18, \text{and} \\
&\int \Big(S^{(1)}_{(\go^{(1)}_{|\go|}+i, \, \go^{(2)}_{|\go|}+j)}(\frac 12)-S^{(1)}_{(\go^{(1)}_{|\go|}+i, \, \go^{(2)}_{|\go|}+j)}(\frac 12)\Big) dP_1=0, \te{ and after some simplification } \text{we have} \\
&\Big(S^{(1)}_{(\go^{(1)}_{|\go|}+i, \, \go^{(2)}_{|\go|}+j)}(\frac 12)-S^{(1)}_{(\go^{(1)}_{|\go|}+1, \, \go^{(2)}_{|\go|}+1)}(\frac 12)-s^{(1)}_{(\go^{(1)}_{|\go|}+1,\,  \go^{(2)}_{|\go|}+1)}\Big)^2=s^{(1)2}_{(\go^{(1)}_{|\go|}, \, \go^{(2)}_{|\go|})}\frac 1 4 (1-\frac 5{3^i})^2.
\end{align*}
Thus, it follows that
\begin{align*}&\int \Big(S^{(1)}_{\go^-(\go^{(1)}_{|\go|}+i, \, \go^{(2)}_{|\go|}+j)}(x_1)-S^{(1)}_{\go^-(\go^{(1)}_{|\go|}+1, \, \go^{(2)}_{|\go|}+1)}(\frac 12)-s^{(1)}_{\go^-(\go^{(1)}_{|\go|}+1,\,  \go^{(2)}_{|\go|}+1)}\Big)^2 dP_1\\
&=s^{(1)2}_{\go}\Big(\frac 1{9^i}\frac 18+\frac 1 4 (1-\frac 5{3^i})^2\Big), \te{ and similarly}\\
& \int \Big(S^{(2)}_{\go^-(\go^{(2)}_{|\go|}+i, \, \go^{(2)}_{|\go|}+j)}(x_2)-S^{(2)}_{\go^-(\go^{(1)}_{|\go|}+1, \, \go^{(2)}_{|\go|}+1)}(\frac 12)-s^{(2)}_{\go^-(\go^{(1)}_{|\go|}+1,\,  \go^{(2)}_{|\go|}+1)}\Big)^2 dP_2\\
&=s^{(2)2}_{\go}\Big(\frac 1{9^j}\frac 18+\frac 1 4 (1-\frac 5{3^j})^2\Big).\ \text{Therefore, \eqref{eq34} implies that}
\end{align*}
\begin{align*}
&\int_{J_{\go(\infty, \infty)}} \|x-a(\go(\infty, \infty))\|^2  dP\\
&=p_\go \sum_{i, j=1}^\infty \frac {1}{2^{i+j}} \Big(s^{(1)2}_{\go}\Big(\frac 1{9^i}\frac 18+\frac 1 4 (1-\frac 5{3^i})^2\Big) + s^{(2)2}_{\go}\Big(\frac 1{9^j}\frac 18+\frac 1 4 (1-\frac 5{3^j})^2\Big)\Big) =p_\go(s_{\go}^{(1)2}+{s_{\go}^{(2)2}})\frac 18.
\end{align*}
Other equalities of the statement are proved similarly.
\end{proof}

\begin{lemma} \label{lemma67}  Let $P$ be the affine measure on $\D R^2$, and let $\set{(a, p), (b, p)}$ be a set of two points lying on the line $x_2=p$ for which the distortion error is smallest. Then, $a=\frac 16$, $b=\frac 56$, $p=\frac 12$ and the distortion error is $\frac 5{36}.$
\end{lemma}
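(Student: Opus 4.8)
The plan is to collapse the constrained two-dimensional problem onto the one-dimensional problem of optimal two-means for the marginal $P_1$, and then to use the known answer for the latter. First I would dispose of the case $a=b$ (then the set is a singleton, with distortion $\ge V=\frac14>\frac5{36}$ by Lemma~\ref{lemma3}), so assume $a<b$. The key observation is that, since the two centers share the second coordinate $p$, for every $x=(x_1,x_2)$ one has $\min\set{\|x-(a,p)\|^2,\|x-(b,p)\|^2}=(x_2-p)^2+\min\set{(x_1-a)^2,(x_1-b)^2}$, the term $(x_2-p)^2$ being common. Integrating against $P$, and noting the surviving $\min$ depends on $x_1$ only, the distortion of $\set{(a,p),(b,p)}$ equals
\[\int(x_2-p)^2\,dP_2+\int\min\set{(x_1-a)^2,(x_1-b)^2}\,dP_1 .\]
By Lemma~\ref{lemma3} the first integral is $V(X_2)+(E(X_2)-p)^2=\frac18+(p-\frac12)^2$, minimized uniquely at $p=\frac12$ with value $\frac18$. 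Hence any minimizing configuration has $p=\frac12$, and it remains to minimize $\int\min\set{(x_1-a)^2,(x_1-b)^2}\,dP_1$ over $a<b$ --- exactly the optimal two-means problem for $P_1$.

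Next I would record the candidate and compute its cost. By Remark~\ref{rem1}, $P_1$ is the self-similar measure generated by $T_k(x)=\frac1{3^k}x+1-\frac1{3^{k-1}}$, $k\ge1$, with probabilities $2^{-k}$; its support meets $[0,1]$ in $[0,\frac13]=T_1([0,1])$ and in $E:=\bigcup_{k\ge2}T_k([0,1])\ci[\frac23,1]$, each of $P_1$-mass $\frac12$, and $P_1$ restricted and normalized to $[0,\frac13]$ (resp. to $E$) is the image of $P_1$ under $T_1$ (resp. under $x\mapsto\frac x3+\frac23$, since $T_k=(x\mapsto\frac x3+\frac23)\circ T_{k-1}$ for $k\ge2$). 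Together with Lemma~\ref{lemma3} this gives $E(X_1\mid X_1\in[0,\frac13])=T_1(\frac12)=\frac16$, $E(X_1\mid X_1\in E)=\frac56$, and
\[\int_{[0,1/3]}\big(x_1-\tfrac16\big)^2\,dP_1=\tfrac12\cdot\tfrac19\cdot V(X_1)=\tfrac1{144}=\int_{E}\big(x_1-\tfrac56\big)^2\,dP_1 .\]
Since the Voronoi boundary $\frac12$ of $\set{\frac16,\frac56}$ lies in the empty gap $(\frac13,\frac23)$ separating $[0,\frac13]$ from $E$, these two integrals are exactly the two cell contributions, so $\set{(\frac16,\frac12),(\frac56,\frac12)}$ has distortion $\frac18+\frac1{144}+\frac1{144}=\frac5{36}$.

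It then remains to prove optimality among all two-point sets for $P_1$. The quickest route is to cite \cite{R2}, where the optimal sets of $n$-means for precisely this measure are determined; at $n=2$ this gives $\set{\frac16,\frac56}$ with quantization error $\frac1{72}$. For a self-contained argument I would take any $a<b$ with Voronoi boundary $c=\frac{a+b}{2}$ and, using the $x\mapsto1-x$ symmetry of $P_1$, assume $c\le\frac23$. If $c\in[\frac13,\frac23]$, the two cells meet $\mathrm{supp}\,P_1$ exactly in $[0,\frac13]$ and $E$, so the distortion is $\ge\int_{[0,1/3]}(x_1-\frac16)^2dP_1+\int_E(x_1-\frac56)^2dP_1=\frac1{72}$ by the variance-minimizing property of the conditional mean. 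If $c<\frac13$, the cell $\set{x_1>c}$ must carry all of $E$ (mass $\frac12$) together with part of $[0,\frac13]$, and a crude distance estimate --- its center lies at distance $\ge\frac16$ from $E$, or else $b>\frac12$ and it must also cover $(c,\frac13]$ from far away --- forces the distortion strictly above $\frac1{72}$. Disposing of these ``off-gap'' splits is the only genuinely computational point and the step I expect to be the main obstacle. Combining the three steps, the smallest distortion over all admissible $\set{(a,p),(b,p)}$ is $\frac5{36}$, attained exactly at $a=\frac16$, $b=\frac56$, $p=\frac12$.
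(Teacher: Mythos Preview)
Your proof is correct and follows a genuinely different route from the paper's. The paper works directly in two dimensions via the centroid (CVT) necessary condition: from the barycenter identity $\sum_{c\in\gb}c\,P(M(c|\gb))=(\frac12,\frac12)$ it reads off $p=\frac12$, then declares the Voronoi boundary to be the line $x_1=\frac12$ (a step it does not fully justify), computes the conditional expectations over $\bigcup_j J_{(1,j)}$ and its complement to obtain $a=\frac16$, $b=\frac56$, and finally evaluates the distortion by summing over the basic rectangles.

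Your decomposition $\min_c\|x-c\|^2=(x_2-p)^2+\min\{(x_1-a)^2,(x_1-b)^2\}$ cleanly splits the constrained problem into two independent one-dimensional minimizations: the $x_2$-part forces $p=\frac12$ purely by the variance identity, and the $x_1$-part is exactly the optimal two-means problem for the marginal $P_1$. This is more transparent and, once you invoke \cite{R2} for the latter, fully rigorous; it also explains structurally why the answer factors as $V(X_2)+V_2(P_1)=\frac18+\frac1{72}=\frac5{36}$. The self-contained handling of the off-gap case $c<\frac13$ that you sketch is indeed the only step needing genuine computation, and you are right to flag it --- but since \cite{R2} is already in the paper's bibliography and treats precisely this infinite family of similitudes, citing it here is both legitimate and in keeping with the paper's own practice.
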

\begin{proof} Let $\gb=\set{(a, p), (b, p)}$.  Since the points for which the distortion error is smallest are the centroids of their own Voronoi regions, by the properties of centroids, we have
\[(a, p) P(M((a, p)|\gb))+(b, p) P(M((b, p)|\gb))=(\frac 12, \frac 12),\]
which implies $p P(M((a, p)|\gb))+p P(M((b, p)|\gb))=\frac 12$, i.e, $p=\frac 12$. Thus, the boundary of the Voronoi regions is the line $x_1=\frac 12$. Now, using the definition of conditional expectation,
\begin{align*}
&(a, \frac 12)=E(X : X \in M((a, \frac 12)|\gb))=E(X : X\in \mathop{\uu}\limits_{j=1}^\infty J_{(1, j)})=\frac 1{\sum_{j=1}^\infty p_{(1,j)}} \sum_{j=1}^\infty p_{(1, j)} S_{(1, j)}(\frac 12, \frac 12),
\end{align*}
which implies $(a, \frac 12)=(\frac 16, \frac 12)$ yielding $a=\frac 16$. Similarly, $b=\frac 56$. Then, the distortion error is
\[\int\min_{c\in \gb} \|x-c\|^2 dP=\mathop{\int}\limits_{\mathop{\uu}\limits_{j=1}^\infty J_{(1, j)}}\|x-(\frac 16, \frac 12)\|^2 dP+\mathop{\int}\limits_{\mathop{\uu}\limits_{i=2,j=1}^\infty J_{(i, j)}}\|x-(\frac 56, \frac 12)\|^2 dP=\frac 5{72}+\frac 5{72}=\frac 5{36}.\]
This completes the proof the lemma.
\end{proof}

The following lemma provides us information on where to look for points
of an optimal set of two-means.

\begin{lemma} \label{lemma68}
Let $P$ be the affine measure on $\D R^2$. The points in an optimal set of two-means can not lie on an oblique line of the affine set.
\end{lemma}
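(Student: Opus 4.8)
The plan is a proof by contradiction built on the centroidal structure of optimal sets together with the bound $V_2(P)\le\frac5{36}$ coming from Lemma~\ref{lemma67}. Suppose $\alpha=\{p_1,p_2\}$ is an optimal set of two-means whose two points lie on a line $\ell$ that is neither horizontal nor vertical. By Proposition~\ref{prop000}, $\alpha$ forms a CVT: writing $H_i:=M(p_i\mid\alpha)$, the common boundary $L:=\pa H_1=\pa H_2$ is the perpendicular bisector of $\overline{p_1p_2}$, each $H_i$ has positive $P$-mass, and $p_i=E(X\mid X\in H_i)$. Since $\ell$ is oblique, so is $L$. (One may also exploit that the reflection $\rho(x_1,x_2)=(x_2,x_1)$ preserves $P$, since $p_{(i,j)}=p_{(j,i)}$ and $S_{(i,j)},S_{(j,i)}$ are $\rho$-conjugate; as $\rho$ sends slope $m$ to slope $1/m$, one may assume $L$ has absolute slope $\ge 1$.)

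The geometric input is the first-level decomposition. The blocks $Q_1:=J_{(1,1)}$, $Q_2:=J_{(1,1)(\es,\infty)}$, $Q_3:=J_{(1,1)(\infty,\es)}$, $Q_4:=J_{(1,1)(\infty,\infty)}$ lie in the corner squares $[0,\tfrac13]^2$, $[0,\tfrac13]\times[\tfrac23,1]$, $[\tfrac23,1]\times[0,\tfrac13]$, $[\tfrac23,1]^2$ respectively; each carries $P$-mass $\frac14$, with centroids $a(Q_1)=(\tfrac16,\tfrac16)$, $a(Q_2)=(\tfrac16,\tfrac56)$, $a(Q_3)=(\tfrac56,\tfrac16)$, $a(Q_4)=(\tfrac56,\tfrac56)$, and $\int_{Q_k}\|x-a(Q_k)\|^2\,dP=\frac1{144}$ for each $k$ by Lemma~\ref{lemma5} with $\go=(1,1)$. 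Moreover $\te{supp}(P)\subseteq([0,\tfrac13]\cup[\tfrac23,1])^2$, so $L$ misses the central cross and, when it meets the unit square, separates $\{Q_1,\dots,Q_4\}$ into two nonempty groups. I treat two cases.

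Case 1: $L$ splits the $P$-mass of no $Q_k$. Then, up to a $P$-null set, each $H_i$ is a union of certain $Q_k$, so $p_i$ is the centroid of that union, and $L$ is the perpendicular bisector of a segment joining two averages of $(\tfrac16,\tfrac16),(\tfrac16,\tfrac56),(\tfrac56,\tfrac16),(\tfrac56,\tfrac56)$. A line cannot separate the diagonal pair $\{Q_1,Q_4\}$ from $\{Q_2,Q_3\}$, so the induced partition is left/right, bottom/top, or a $1$--$3$ split. In the first two cases $L$ is the vertical line $x_1=\tfrac12$ or the horizontal line $x_2=\tfrac12$, which is not oblique. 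In the $1$--$3$ case, a direct check shows the perpendicular bisector of $(\tfrac16,\tfrac16)$ and $\tfrac13\big((\tfrac16,\tfrac56)+(\tfrac56,\tfrac16)+(\tfrac56,\tfrac56)\big)=(\tfrac{11}{18},\tfrac{11}{18})$ is the line $x_1+x_2=\tfrac79$, which passes through $Q_2$ and carries $P$-mass of $Q_2$ on both sides (e.g. inside $J_{(1,2)(1,1)}$) — contradicting that no $Q_k$ is split; the other three choices of the lone block are identical after relabeling. Hence Case 1 cannot hold for an optimal $\alpha$.

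Case 2: $L$ cuts through the $P$-mass of some $Q_{k_0}$. This is the main obstacle. The plan is to show the distortion error still exceeds $\frac5{36}$ by a direct lower estimate: since $L$ cuts $Q_{k_0}$ and has absolute slope $\ge1$, it is confined to a neighbourhood of one corner of the unit square, so using $p_i=E(X\mid X\in H_i)$ together with the fact that each $H_i$ contains whole blocks from $\{Q_k\}_{k\ne k_0}$, the points $p_1,p_2$ are pinned into small explicit regions; one then bounds $\int_{Q_{k_0}}\min_i\|x-p_i\|^2\,dP$ below via Lemma~\ref{lemma5} applied inside $Q_{k_0}$ (a rescaled copy of the whole configuration), adds the forced contributions $\tfrac14\min_i\|a(Q_k)-p_i\|^2$ from the blocks disjoint from $L$, and invokes \eqref{eq2} to get a total above $\frac5{36}$ in each sub-case. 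I expect the finicky part to be making the admissible regions for $p_1,p_2$ precise enough to certify that splitting a heavy block $Q_{k_0}$ can never beat the axis-parallel value $\frac5{36}$; an alternative is to push the self-inconsistency argument of Case~1 — balancing the centroid identities against where $L$ must cut $Q_{k_0}$ — to show outright that no oblique CVT with a split block exists. Once this case is excluded, an optimal set of two-means cannot lie on an oblique line.
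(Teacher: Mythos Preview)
Your Case~1 is clean and the self-consistency argument (the perpendicular bisector of the putative centroids would itself cut a block, contradicting the case hypothesis) is a nice idea. The problem is Case~2: you explicitly leave it as a plan (``I expect the finicky part to be\dots''; ``an alternative is to\dots''). This is not a peripheral case. An oblique bisector $L$ of absolute slope $\ge 1$ passing through the unit square \emph{generically} cuts one of the corner blocks $Q_k$, so Case~2 is where essentially all the content lies. Neither of your proposed routes is carried out: you do not produce the ``small explicit regions'' for $p_1,p_2$, you do not bound $\int_{Q_{k_0}}\min_i\|x-p_i\|^2\,dP$ from below, and the self-inconsistency idea from Case~1 does not obviously extend once a block is split (the centroid of a half-block is no longer a simple average of the four corner centroids). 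As written, the proof has a genuine gap.

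The paper avoids your case split altogether. It argues directly that, by the $\rho(x_1,x_2)=(x_2,x_1)$ symmetry of $P$, the oblique CVT with the smallest distortion must have bisector $L:\ x_2=x_1$; it then computes the two conditional centroids of the half-planes explicitly (they are $(\tfrac3{10},\tfrac7{10})$ and $(\tfrac7{10},\tfrac3{10})$, obtained by summing over nested diagonal words), and finally lower-bounds the distortion for this single configuration by integrating $\|x-(\tfrac3{10},\tfrac7{10})\|^2$ over a carefully chosen finite union $A$ of basic rectangles contained in one Voronoi region. The numerical value $2\int_A\|x-(\tfrac3{10},\tfrac7{10})\|^2\,dP\approx 0.13899$ already exceeds $\tfrac5{36}\approx 0.138889$, giving the contradiction. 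If you want to complete your argument, the most economical fix is to borrow this step: once you have reduced (via $\rho$) to slope $\ge 1$, observe that the diagonal bisector is the symmetric extremum, compute the two centroids for that specific $L$, and produce a single explicit lower bound beating $\tfrac5{36}$ rather than trying to control all possible positions of $L$.
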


\begin{proof}
In the affine set, among all the oblique lines that pass through the point $(\frac 12, \frac 12)$, the line $x_2=x_1$ has the maximum symmetry, i.e., with respect to the line $x_2=x_1$ the affine set is geometrically symmetric. Also,
observe that, if two basic rectangles of similar geometrical shape lie in the opposite sides of the line $x_2=x_1$, and are equidistant from the line $x_2=x_1$, then they have the same probability (see Figure~\ref{Fig1}); hence, they are symmetric with respect to the probability distribution $P$. Due to this, among all the pairs of two points which have the boundaries of the Voronoi regions oblique lines passing through the point $(\frac 12, \frac 12)$, the two points which have the boundary of the Voronoi regions the line $x_2=x_1$ will give the smallest distortion error. Again, we know the two points which give the smallest distortion error are the centroids of their own Voronoi regions. Let $(a_1, b_1)$ and $(a_2, b_2)$ be the centroids of the left half and the right half of the affine set with respect to the line $x_2=x_1$ respectively. Then, from the definition of conditional expectation, we have

\begin{align*}
&(a_1, b_1)=2\Big[ \mathop{\sum}\limits_{i=1, j=i+1}^\infty \frac 1{2^{i+j}} S_{(i, j)}(\frac 12, \frac 12)+\mathop{\sum}\limits_{k_1=1}^\infty \mathop{\sum}\limits_{\mathop{i=1}\limits_{j=i+1}}^\infty \frac 1{2^{2k_1+i+j}} S_{(k_1, k_1)(i, j)}(\frac 12, \frac 12)\\
&+\mathop{\sum}\limits_{k_1=1}^\infty \mathop{\sum}\limits_{k_2=1}^\infty \mathop{\sum}\limits_{\mathop{i=1}\limits_{j=i+1}}^\infty \frac 1{2^{2k_1+2k_2+i+j}} S_{(k_1, k_1)(k_2, k_2)(i, j)}(\frac 12, \frac 12)\\
&+\mathop{\sum}\limits_{k_1=1}^\infty \mathop{\sum}\limits_{k_2=1}^\infty  \mathop{\sum}\limits_{k_3=1}^\infty \mathop{\sum}\limits_{\mathop{i=1}\limits_{j=i+1}}^\infty \frac 1{2^{2k_1+2k_2+2k_3+i+j}} S_{(k_1, k_1)(k_2, k_2)(k_3, k_3)(i, j)}(\frac 12, \frac 12)+\cdots\Big]=(\frac 3 {10}, \frac{7}{10}),
\end{align*}
and
\begin{align*}
&(a_2, b_2)=2\Big(\mathop{\sum}\limits_{i=1}^\infty \mathop{\sum}\limits_{j=1}^{i-1}\frac 1{2^{i+j}} S_{(i, j)}(\frac 12, \frac 12)+\mathop{\sum}\limits_{k_1=1}^\infty \mathop{\sum}\limits_{i=1}^\infty \mathop{\sum}\limits_{j=1}^{i-1}\frac 1{2^{2k_1+i+j}} S_{(k_1, k_1)(i, j)}(\frac 12, \frac 12)\\
&+\mathop{\sum}\limits_{k_1=1}^\infty \mathop{\sum}\limits_{k_2=1}^\infty \mathop{\sum}\limits_{i=1}^\infty \mathop{\sum}\limits_{j=1}^{i-1} \frac 1{2^{2k_1+2k_2+i+j}} S_{(k_1, k_1)(k_2, k_2)(i, j)}(\frac 12, \frac 12)\\
&+\mathop{\sum}\limits_{k_1=1}^\infty \mathop{\sum}\limits_{k_2=1}^\infty  \mathop{\sum}\limits_{k_3=1}^\infty \mathop{\sum}\limits_{i=1}^\infty \mathop{\sum}\limits_{j=1}^{i-1} \frac 1{2^{2k_1+2k_2+2k_3+i+j}} S_{(k_1, k_1)(k_2, k_2)(k_3, k_3)(i, j)}(\frac 12, \frac 12)+\cdots\Big)=(\frac 7 {10}, \frac{3}{10}).
\end{align*}
Let $\gb=\set{(\frac 3 {10}, \frac{7}{10}), (\frac 7 {10}, \frac{3}{10})}$. Then, due to symmetry,
\begin{align*}
\int\min_{c\in \gb}\|x-c\|^2 dP=2 \int_{M((\frac 3 {10}, \frac{7}{10})|\gb)}\|x-(\frac 3 {10}, \frac{7}{10})\|^2 dP.
\end{align*}
Write
\begin{align*}&A:=(\mathop{\uu}\limits_{j=2}^4J_{(1,1)(1,1)(1,1)(1,1)(1, j)})\uu(\mathop\uu\limits_{j=2}^6J_{(1,1)(1,1)(1,1)(1, j)})\uu (\mathop\uu\limits_{j=3}^5J_{((1,1)(1,1)(1,1)(2, j)})\uu (\mathop{\uu}\limits_{j=2}^8J_{(1,1)(1,1)(1, j)})\\
&\uu (\mathop{\uu}\limits_{j=3}^6J_{(1,1)(1,1)(2, j)})\uu J_{(1,1)(1,1)(3,4)}\uu (\mathop{\uu}\limits_{j=2}^8J_{(1,1)(1, j)})\uu (\mathop{\uu}\limits_{j=3}^7J_{(1,1)(2, j)})\uu (\mathop{\uu}\limits_{j=4}^6J_{(1,1)(3, j)})\uu(\mathop{\uu}\limits_{j=2}^{10}J_{(1, j)})\\
&\uu(\mathop{\uu}\limits_{j=3}^{10}J_{(2, j)})\uu(\mathop{\uu}\limits_{j=4}^{10}J_{(3, j)})\uu(\mathop{\uu}\limits_{j=5}^{9}J_{(4, j)})\uu(\mathop{\uu}\limits_{j=6}^{7}J_{(5, j)}).
\end{align*}
Since $A$ is a proper subset of $M((\frac 3 {10}, \frac{7}{10})|\gb)$, we have
$\int\min_{c\in \gb}\|x-c\|^2 dP>2\mathop\int\limits_A\|x-(\frac 3 {10}, \frac{7}{10})\|^2 dP.
$  Now using \eqref{eq2}, and then upon simplification,  it follows that
\[\int\min_{c\in \gb}\|x-c\|^2 dP>2\mathop\int\limits_A\|x-(\frac 3 {10}, \frac{7}{10})\|^2 dP=0.13899,\]
which is larger than the distortion error $\frac{5}{36}$ obtained in Lemma~\ref{lemma67}. Hence, the points in an optimal set of two-means can not lie on a oblique line of the affine set. Thus, the assertion of the lemma follows.
\end{proof}

\begin{figure}
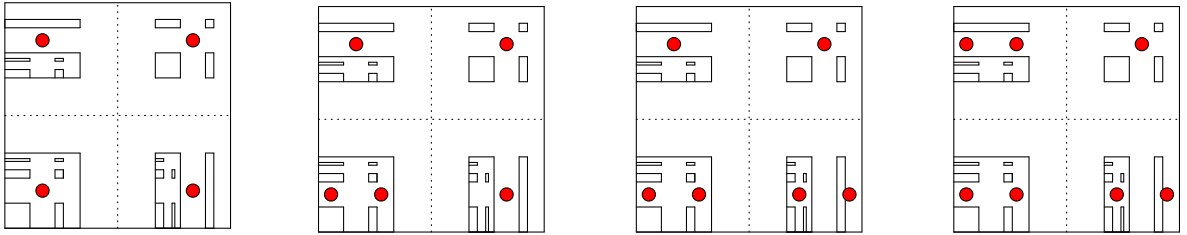


\caption{Optimal sets of n-means for $4\leq n\leq 7$. Optimal set of 4-means is unique; on the other hand, optimal sets of $n$-means for $n=5, 6, 7$ are not unique.}  \label{Fig4}
\end{figure}


\begin{prop} \label{prop1}
\tit {Let $P$ be the affine measure on $\D R^2$. Then, the sets $\set{(\frac 16, \frac 12), (\frac 56, \frac 12)}$   and  $\set{(\frac 12, \frac 16), (\frac 12, \frac 56)}$ form two different optimal sets of two-means with quantization error $\frac 5{36}.$}
\end{prop}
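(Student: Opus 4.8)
The plan is to reduce the statement to Lemmas~\ref{lemma67} and~\ref{lemma68} together with the reflection symmetry of $P$ in the diagonal $x_2=x_1$. First I would make that symmetry explicit: writing $\gs(x_1,x_2)=(x_2,x_1)$, one checks directly that $S_{(i,j)}\circ\gs=\gs\circ S_{(j,i)}$ for all $i,j\in\D N$, and since $p_{(i,j)}=p_{(j,i)}=\frac1{2^{i+j}}$, applying $\gs^{-1}$ to the identity $P=\sum p_{(i,j)}P\circ S_{(i,j)}^{-1}$ and reindexing shows that $P\circ\gs^{-1}$ satisfies the same invariance equation as $P$; by uniqueness of the invariant measure, $P\circ\gs^{-1}=P$. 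Because $\gs$ is an isometry, this gives $V(P;\gs(\ga))=V(P;\ga)$ for every finite $\ga\sci\D R^2$. In particular $\set{(\frac12,\frac16),(\frac12,\frac56)}=\gs\big(\set{(\frac16,\frac12),(\frac56,\frac12)}\big)$ has the same distortion error $\frac5{36}$ as the pair produced by Lemma~\ref{lemma67}, so in any case $V_2\le\frac5{36}$.

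Next I would take an arbitrary optimal set of two-means $\ga$. If its two points coincide then its distortion equals the variance $V=\frac14>\frac5{36}$ (by the Remark after Lemma~\ref{lemma3}), contradicting optimality; alternatively one invokes non-atomicity of $P$. So $\ga$ consists of two distinct points, which lie on a unique line $\ell$. By Lemma~\ref{lemma68}, $\ell$ is not oblique, hence $\ell$ is horizontal or vertical. If $\ell$ is horizontal, then $\ga$ is one of the configurations $\set{(a,p),(b,p)}$ considered in Lemma~\ref{lemma67}, so $V(P;\ga)\ge\frac5{36}$. If $\ell$ is vertical, then $\gs(\ga)$ has both its points on a horizontal line, so $V(P;\ga)=V(P;\gs(\ga))\ge\frac5{36}$, again by Lemma~\ref{lemma67}. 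Either way $V_2=V(P;\ga)\ge\frac5{36}$, and combined with the previous paragraph we get $V_2=\frac5{36}$. Since $\set{(\frac16,\frac12),(\frac56,\frac12)}$ and $\set{(\frac12,\frac16),(\frac12,\frac56)}$ both attain this value and are clearly distinct, each is an optimal set of two-means, which is exactly the assertion.

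I do not expect a genuine obstacle here, since Lemmas~\ref{lemma67} and~\ref{lemma68} already isolate the two substantive points, namely the exact one-dimensional minimization and the exclusion of oblique Voronoi boundaries. The only things to be careful about are logical: Lemma~\ref{lemma67} must be used as a lower bound valid for \emph{every} horizontal (resp., after applying $\gs$, vertical) configuration, not merely to exhibit one good set; and one should note that two distinct points always determine a line, so that Lemma~\ref{lemma68} genuinely forces $\ell$ to be axis-parallel. Writing down the diagonal-reflection invariance $P\circ\gs^{-1}=P$ is the only extra computation required, and it is routine.
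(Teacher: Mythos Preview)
Your proposal is correct and follows essentially the same route as the paper: exclude oblique lines via Lemma~\ref{lemma68}, invoke Lemma~\ref{lemma67} for the horizontal case, and use the diagonal symmetry for the vertical case. Your version is more explicit than the paper's (you spell out the invariance $P\circ\gs^{-1}=P$ and the reduction of the vertical case to the horizontal one, whereas the paper merely says ``due to symmetry''), but the underlying argument is the same.
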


\begin{proof} By Lemma~\ref{lemma68}, it is known that the points in an optimal set of two-means can not lie on an oblique line of the affine set. Thus, by Lemma~\ref{lemma67}, we see that $\set{ (\frac 16, \frac 12), (\frac 56, \frac 12)}$ forms an optimal set of two-means with quantization error $\frac{5}{36}$. Due to symmetry, $\set{(\frac 12, \frac 16), (\frac 12, \frac 56)}$ forms another optimal set of two-means (see Figure~\ref{Fig2}); thus, the assertion follows.
\end{proof}


\begin{prop}\label{prop2}
{\it Let $P$ be the affine measure on $\D R^2$. Then, the set $\set{(\frac 16, \frac 16), (\frac 56, \frac 16), (\frac 12, \frac 56)}$ forms an optimal set of three-means with quantization error $\frac 1{12}$.}
\end{prop}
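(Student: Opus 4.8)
The proof divides into an easy upper bound and a more laborious lower bound, both organized around the four ``quarter blocks''
\[
B_1=J_{(1,1)},\quad B_2=J_{(1,1)(\infty,\es)},\quad B_3=J_{(1,1)(\es,\infty)},\quad B_4=J_{(1,1)(\infty,\infty)},
\]
which by Section~2 partition $\operatorname{supp}P$, each has $P$-mass $\tfrac14$, each is an affine (ratio $\tfrac13$) copy of the whole configuration, and which have centroids $(\tfrac16,\tfrac16),(\tfrac56,\tfrac16),(\tfrac16,\tfrac56),(\tfrac56,\tfrac56)$ with $\int_{B_k}\|x-c_{B_k}\|^2dP=\tfrac14\cdot\tfrac29\cdot\tfrac18=\tfrac1{144}$ by Lemmas~\ref{lemma4} and \ref{lemma5}. (The ``affine copy of the whole'' statement rests on $P=P_1\times P_2$ with $P_1=P_2=\mu$ of Lemma~\ref{lemma3}, which holds because $\mu\times\mu$ satisfies the same self-similarity equation as $P$ and such solutions are unique.)

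\emph{Upper bound.} Put $\gb=\{(\tfrac16,\tfrac16),(\tfrac56,\tfrac16),(\tfrac12,\tfrac56)\}$. Comparing each first-level rectangle $J_{(i,j)}$ with the three points (the resulting assignment being inherited by all descendants) shows that the Voronoi partition of $\operatorname{supp}P$ induced by $\gb$ is $\{\,B_1,\ B_2,\ B_3\uu B_4\,\}$, with the listed points as centroids (Lemma~\ref{lemma4}, Note~\ref{note1}). By Lemma~\ref{lemma5} the first two cells each contribute $\tfrac1{144}$; for the third, split it as $B_3\uu B_4$ and apply the shift identity from Note~\ref{note1}/\eqref{eq2} to each piece, using the centroids $(\tfrac16,\tfrac56)$ and $(\tfrac56,\tfrac56)$ and $P$-mass $\tfrac14$, so that each piece gives $\tfrac1{144}+\tfrac14\cdot\tfrac19=\tfrac5{144}$. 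Hence $V_3\le V(\gb)=\tfrac2{144}+\tfrac{10}{144}=\tfrac1{12}$.

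\emph{Lower bound.} Let $\gb=\{p_1,p_2,p_3\}$ be an optimal set of $3$-means; by Proposition~\ref{prop000} it is a centroidal Voronoi configuration with three cells of positive $P$-measure. Intersecting the cells $M(p_\ell\mid\gb)$ with the $B_k$ cuts $\operatorname{supp}P$ into finitely many ``pieces''; writing each cell as the union of its pieces and applying the shift identity cell by cell gives
\[
V(\gb)=\sum_{\text{pieces }\pi}\int_\pi\|x-c_\pi\|^2dP\ +\ \sum_{\ell=1}^{3}\ \sum_{\pi\,\ci\,M(p_\ell)}P(\pi)\,\|c_\pi-p_\ell\|^2 .
\]
If each $B_k$ lies in one cell there are exactly four pieces: the variance terms sum to $\tfrac4{144}$, and since $3<4$ one cell equals a union $B_a\uu B_b$ with center $\tfrac12(c_{B_a}+c_{B_b})$, contributing a cross term $\tfrac18\|c_{B_a}-c_{B_b}\|^2\ge\tfrac18\cdot\tfrac49=\tfrac1{18}$ (the minimal squared distance among the four block centroids being $\tfrac49$). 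Thus $V(\gb)\ge\tfrac4{144}+\tfrac1{18}=\tfrac1{12}$, with equality only when the fused pair is \emph{adjacent}; fusing $B_3$ with $B_4$ yields exactly the set in the statement. If instead some block is split between two cells, that block alone absorbs two of the three points: its internal cost is at least $V_2(P_{|B_k})=\tfrac14\cdot\tfrac19\cdot V_2(P)=\tfrac14\cdot\tfrac19\cdot\tfrac5{36}$ by Proposition~\ref{prop1}, while the other three blocks together with the cross terms forced by moving the absorbing centers between block squares (separated by $\ge\tfrac13$ in a coordinate) again raise $V(\gb)$ above $\tfrac1{12}$ — here the gain from splitting a block is always outweighed by the extra cross-term cost, but the estimate must be run with the exact centroid positions rather than crude triangle inequalities. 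Combining the two cases, $V_3=\tfrac1{12}$.

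The substantive part is this second case of the lower bound: eliminating every ``split'' configuration and, in the spirit of Lemma~\ref{lemma68}, observing that $\operatorname{supp}P$ avoids the gaps between the four block squares, so that only the combinatorial type of the partition into pieces matters and not the precise shape of the Voronoi boundaries there. Everything else reduces to the short computations above together with Lemmas~\ref{lemma4}--\ref{lemma5} and Proposition~\ref{prop1}.
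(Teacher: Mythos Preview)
Your upper bound and your Case~1 of the lower bound (no block is split by a Voronoi bisector) are correct and clean --- in fact much cleaner than the paper's argument, which proceeds by a lengthy geometric case analysis on where the three points can sit relative to the four corner squares $A_1,\dots,A_4$. Your observation that in Case~1 the cells must partition the four equal-mass blocks as $(2,1,1)$, giving variance cost $\tfrac{4}{144}$ plus a cross term $\tfrac18\|c_{B_a}-c_{B_b}\|^2\ge\tfrac1{18}$, is exactly the right way to see the lower bound there.

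The genuine gap is Case~2. You assert that when some block $B_k$ is split, ``that block alone absorbs two of the three points'' and that the split-block cost is $\ge \tfrac14\cdot\tfrac19\cdot V_2(P)$; neither step is justified. A block being split between two Voronoi cells says nothing about the \emph{location} of the corresponding generators --- they need not lie in $B_k$, and the third point is still available to $B_k$ as well, so even the $V_2$-type lower bound on the split block requires the extra hypothesis that only two cells meet it. More importantly, you then have to control the cost on the \emph{remaining} three blocks, and here you write only that ``the gain from splitting a block is always outweighed by the extra cross-term cost'' and that ``the estimate must be run with the exact centroid positions rather than crude triangle inequalities.'' That is an admission that the inequality has not been established. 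Concretely: if $B_1$ is split between $M(p_1)$ and $M(p_2)$ while $B_2$ also lies partly in $M(p_2)$, the cross-term bookkeeping becomes intertwined across blocks and the clean $\tfrac18\|c_{B_a}-c_{B_b}\|^2$ term from Case~1 is no longer available. The paper handles this regime not by a block-decomposition estimate but by locating the points geometrically (showing first that $\ga$ must meet $\bigcup A_i$, then that it meets exactly two of the $A_i$) and computing explicit distortions along the way. To complete your route you would need an honest enumeration of the possible split patterns together with quantitative lower bounds in each --- doable, but not done here.
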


\begin{proof} Let us first consider a three-point set $\gb$ given by $\gb=\set{(\frac 16, \frac 16), (\frac 56, \frac 16), (\frac 12, \frac 56)}$.
Then, using Lemma~\ref{lemma5} and equation \eqref{eq2}, we have
\begin{align*}
&\int \min_{a \in \beta}\|x-a\|^2  dP=\int_{J_{(1,1)}} \|x-(\frac 16, \frac 16)\|^2dP+\int_{J_{(1,1)(\infty, \es)}} \|x-(\frac 56, \frac 16)\|^2dP\\
&\qquad +\int_{J_{(1,1)(\es, \infty)}\uu J_{(1,1)(\infty, \infty)}} \|x-(\frac 12, \frac 56)\|^2dP=\frac 1{12}.
\end{align*}
Since $V_3$ is the quantization error for an optimal set of three-means, we have $\frac 1{12}\geq V_3$. Let $\ga=\set{(a_i, b_i) : 1\leq i\leq 3}$ be an optimal set of three-means. Since the optimal points are the centroids of their own Voronoi regions, we have $\ga\sci [0, 1]\times [0, 1]$. Let $A_1=[0, \frac 13]\times [0, \frac 13]$, $A_2=[\frac 2 3, 1]\times [0, \frac 13]$, $A_3=[0, \frac 13]\times [\frac 23, 1]$, and $A_4=[\frac 23, 1]\times [\frac 23, 1]$. Note that the centroids of $A_1$, $A_2$, $A_3$ and $A_4$ with respect to the probability distribution $P$ are respectively $(\frac 16, \frac 16)$, $(\frac 56, \frac 16)$, $(\frac 16, \frac 56)$ and $(\frac 56, \frac 56)$.  Suppose that $\ga$ does not contain any point from $\mathop{\uu}\limits_{i=1}^4 A_i$. Then, we can assume that all the points of $\ga$ are on the line $x_2=\frac 12$, i.e., $\ga=\set{(a_i, \frac 12): 1\leq i\leq 3}$ with $a_1< a_2< a_3$. If $a_1>\frac 13 ,$ quantization error can be strictly reduced by moving the point $(a_1, \frac 12)$ to $(\frac 13, \frac 12)$. So, we can assume that $a_1\leq \frac 13$. Similarly, we can show that $a_3\geq \frac 23$. Now, if $a_2<\frac 13$, then $A_3\uu A_4\sci M((a_3, \frac 12)|\ga)$. Moreover, for any $x=(x_1, x_2) \in J_{(1,1)(1,1)} \uu J_{(1,3)}$, we have $m(x):=\min_{c\in\ga}\|(x_1, x_2)-c\|^2 \geq (\frac 7{18})^2$ and so by \eqref{eq2} and Lemma~\ref{lemma5}, we obtain
\begin{align*}
&\int m(x)^2  dP=\mathop{\int}\limits_{J_{(1,1)(1,1)} \uu J_{(1,3)}} m(x)^2dP+\mathop{\int}\limits_{J_{(1,1)(\infty, \es)}\uu J_{(1,1)(\infty, \infty)}} m(x)^2dP\\
&\geq\frac 1{16} \Big((\frac 1{81}+\frac 1 {81})\frac 18+(\frac 7{18})^2\Big)+\frac 1{16} \Big((\frac{1}{9}+\frac{1}{27^2})\frac 18+(\frac 7{18})^2\Big)+\mathop{\int}\limits_{J_{(1,1)(\infty, \es)}\uu J_{(1,1)(\infty, \infty)}}\|x-(\frac 56, \frac 12)\|^2dP\\
&=\frac 1{16} \Big((\frac 1{81}+\frac 1 {81})\frac 18+(\frac 7{18})^2\Big)+\frac 1{16} \Big((\frac{1}{9}+\frac{1}{27^2})\frac 18+(\frac 7{18})^2\Big)+\frac{5}{72}=\frac{1043}{11664}>V_3,
\end{align*}
which is a contradiction, and so $a_2\geq \frac 13$ must be true. If $a_2>\frac 23$, similarly we can show that a contradiction arises. So, $\frac 13 < a_2<\frac 23$. Next, suppose that $\frac 12 \leq a_2< \frac 23$. Then, we have $\frac 12 (a_1+a_2)\leq \frac 13$ which implies $a_1\leq \frac 16$, for otherwise quantization error can be strictly reduced by moving $a_2$ to $(\frac 23, \frac 12)$, contradicting the fact that $\ga$ is an optimal set. Then, $\mathop{\uu}\limits_{j=1}^\infty J_{(1,1)(1, j)}\uu\mathop{\uu}\limits_{i=2, j=1}^\infty J_{(1, i)(1, j)} \sci M((a_1, \frac 12)|\ga)$ and
$E(X : X\in \mathop{\uu}\limits_{j=1}^\infty J_{(1,1)(1, j)}\uu\mathop{\uu}\limits_{i=2, j=1}^\infty J_{(1, i)(1, j)})=(\frac{1}{18},\frac{1}{2}).$
So, for any $(x_1, x_2) \in \mathop{\uu}\limits_{i=2, j=1}^\infty J_{(1,1)(i, j)}\uu\mathop{\uu}\limits_{\underset{j=1}{k=1,i=2,}}^\infty J_{(k, 2)(i, j)}$, $\min_{c\in \ga}\|(x_1, x_2)-c\|^2\geq \|(x_1, x_2)-(\frac 16, \frac 12)\|^2$. If
$A=\mathop{\uu}\limits_{j=1}^\infty J_{(1,1)(1, j)}\ \cup \ \mathop{\uu}\limits_{i=2, j=1}^\infty J_{(1, i)(1, j)}, $ $B=\mathop{\uu}\limits_{i=2, j=1}^\infty J_{(1,1)(i, j)} \cup \mathop{\uu}\limits_{\underset{j=1}{k=1,i=2,}}^\infty J_{(k, 2)(i, j)} , $
$ A^{\prime}=\mathop{\uu}\limits_{j=1}^\infty J_{(1,1)(1, j)} $ and $B^{\prime}=\mathop{\uu}\limits_{\underset{j=1}{k=1,i=2,}}^\infty J_{(k, 2)(i, j)}, $ then

\begin{align*}
&\int m(x)^2  dP>\mathop{\int}\limits_{A} \|(x_1, x_2)-(\frac{1}{18},\frac{1}{2})\|^2dP+\mathop{\int}\limits_{B}\|(x_1, x_2)-(\frac 16, \frac 12)\|^2 dP\\
&=2 \mathop{\int}\limits_{A^{\prime}} \|x-(\frac{1}{18},\frac{1}{2})\|^2dP+\mathop{\int}\limits_{\mathop{\uu}\limits_{i=2, j=1}^\infty J_{(1,1)(i, j)}}\|x-(\frac 16, \frac 12)\|^2 dP
 +\mathop{\int}\limits_{B^{\prime}}\|x-(\frac 16, \frac 12)\|^2 dP\\
&=2\cdot\frac{41}{2592}+\frac{5}{288}+\frac{551}{14688}=\frac{953}{11016}>V_3,
\end{align*}
which is a contradiction. Similarly, if we assume $\frac 13 \leq a_2< \frac 12$, a contradiction will arise. Therefore, all the points in $\ga$ can not lie on the line $x_2=\frac 12$. Let $(a_1, b_1)$ and $(a_3, b_3)$ lie on the line $x_2=\frac 12$, and $(a_2, b_2)$ is above or below the horizontal line $x_2=\frac 12$. If $(a_2, b_2)$ is above the horizontal line then the quantization error can be strictly reduced by moving $(a_1, b_1)$ to $A_1$ and $(a_3, b_3)$ to $A_2$ contradicting the fact that $\ga$ is an optimal set. Similarly, if $(a_2, b_2)$ is below the horizontal line a contradiction will arise. All these contradictions arise due to our assumption that $\ga$ does not contain any point from $\mathop{\uu}\limits_{i=1}^4 A_i$. Hence, $\ga$ contains at least one point from $\mathop{\uu}\limits_{i=1}^4 A_i$.
In order to complete the proof of the Proposition, first we will prove the following claim:

\tbf{Claim.} \tit{$\te{card}(\set{i : \ga \ii A_i \neq \es, \, 1\leq i\leq 4})=2$.} \\
For the sake of contradiction, assume that $\te{card}(\set{i : \ga \ii A_i \neq \es, \, 1\leq i\leq 4})=1$. Then, without any loss of generality we assume that $(a_1, b_1) \in A_1$ and $(a_i, b_i) \not \in A_2\uu A_3 \uu A_4$ for $i=2, 3$. Due to  symmetry of the affine set with respect to the diagonal $x_2=x_1$, we can assume that $(a_1, b_1) \in A_1$ lies on the diagonal $x_2=x_1$; $(a_2, b_2)$ and $(a_3, b_3)$ are equidistant from the diagonal $x_2=x_1$ and are in opposite sides of the diagonal $x_2=x_1$.  Now, consider the following cases:

\tit{Case 1.} Assume that both $(a_2, b_2)$ and $(a_3, b_3)$ are below the diagonal $x_2=1-x_1$, but not in $A_1\uu A_2\uu A_3$. Let $(a_2, b_2)$ be above the diagonal $x_2=x_1$ and $(a_3, b_3)$ be below the diagonal $x_2=x_1$. In that case, quantization error can be strictly reduced by moving $(a_2, b_2)$ to $A_3$ and $(a_3, b_3)$ to $A_2$ which contradicts the optimality of $\ga$.

\tit{Case 2.} Assume that both $(a_2, b_2)$ and $(a_3, b_3)$ are above the diagonal $x_2=1-x_1$. Let $(a_2, b_2)$ lie above the diagonal $x_2=x_1$ and $(a_3, b_3)$ lie below the diagonal $x_2=x_1$. Then, due to symmetry we can assume that $(a_1, b_1)=(\frac 16, \frac 16)$ which is the centroid of $A_1$, $(a_2, b_2)=(\frac 12, \frac 56)$ which is the midpoint of the line segment joining the centroids of $A_3$ and $A_4$,  $(a_3, b_3)=(\frac 56, \frac 12)$ which is the midpoint of the line segment joining the centroids of $A_2$ and $A_4$. Then,

\begin{align*}
&\int m(x)^2 dP= \mathop{\int}\limits_{J_{(1,1)}} m(x)^2 dP+\mathop{\int}\limits_{J_{(1,1)(\es, \infty)}} m(x)^2 dP +\mathop{\int}\limits_{J_{(1,1)(\infty, \es)}}m(x)^2 dP+\mathop{\int}\limits_{J_{(1,1)(\infty, \infty)}}m(x)^2dP\\
&\geq \frac{1}{144}+ \mathop{\int}\limits_{J_{(1,1)(\es, \infty)}}\|x-(\frac 12,\frac 56)\|^2 dP+\mathop{\int}\limits_{J_{(1,1)(\infty, \es)}}\|x-(\frac 56, \frac 12)\|^2 dP +\mathop{\int}\limits_{\mathop{\uu}\limits_{\underset{j=i+1}{i=2}}^\infty J_{(i, j)}}\|x-(\frac 12, \frac 56)\|^2dP\\
&=\frac{1}{144}+\frac{5}{144}+\frac{5}{144}+\frac{1381}{166320}=\frac{7043}{83160}>V_3,
\end{align*}
which is a contradiction. Thus, $\te{card}(\set{i : \ga \ii A_i \neq \es, \, 1\leq i\leq 4})=1$ cannot hold.

Next, for the sake of contradiction, assume that $\te{card}(\set{i : \ga \ii A_i \neq \es, \, 1\leq i\leq 4})=3$. Then, without any loss of generality we assume that $(a_1,b_1) \in  A_3$, $(a_2, b_2) \in A_2$ and $(a_3, b_3) \in A_4$. Let $A_{11}$ and $A_{12}$ be the regions of $A_1$ which are respectively above and below the diagonal of $A_1$ passing through $(0, 0)$. Due to symmetry, we must have
$A_3\uu A_{11} \sci M((a_1, b_1)|\ga)$ and $A_2\uu A_{12}\sci M((a_2, b_2)|\ga)$. Notice that $A_3\uu A_{11} \sci M((a_1, b_1)|\ga)$ implies
\[A_3\uu \mathop{\uu}\limits_{i=1, j=i+1} J_{(1,1)(i, j)}\uu \mathop{\uu}\limits_{\overset{k=1, i=1}{j=i+1}} J_{(1,1)(k,k)(i, j)}\sci M((a_1, b_1)|\ga),\]
and using \eqref{eq00}, we have
\[E(X : X\in A_3\uu \mathop{\uu}\limits_{i=1, j=i+1} J_{(1,1)(i, j)}\uu \mathop{\uu}\limits_{\overset{k=1, i=1}{j=i+1}} J_{(1,1)(k,k)(i, j)}=(\frac{1385}{9438},\frac{6173}{9438}),\]
which shows that the point $(a_1, b_1)$ falls below the line $x_2=\frac 23$, which is a contradiction as we assumed that $(a_1, b_1)\in A_3$. This contradiction arises due to our assumption that $\te{card}(\set{i : \ga \ii A_i \neq \es, \, 1\leq i\leq 4})=3$. Hence, we conclude that $\te{card}(\set{i : \ga \ii A_i \neq \es, \, 1\leq i\leq 4})=2$, which proves the claim.

By the claim, we assume that $(a_1, b_1) \in A_1$ and $(a_3, b_3) \in A_2$. Notice that $A_1, A_2, A_3, A_4$ are geometrically symmetric  as well as their corresponding centroids are symmetrically distributed over the square $[0, 1]\times [0, 1]$. Without any loss of generality, we can assume that the optimal point $(a_1, b_1)$ is the centroid of $A_1$, i.e., $(a_1, b_1)=(\frac 16, \frac 16)$. Then, due to symmetry with respect to the line $x_1=\frac 12$, it follows that $(a_3, b_3)=\te{centroid of } A_2=(\frac 56, \frac 16)$, and $(a_2, b_2)$ lies on $x_1=\frac 12$ but above the line $x_2=\frac 12$. Now, notice that
 \[\mathop{\min}\limits_{(a_3, b_3) \in [\frac 13, \frac 23]\times [\frac23, 1]}\set{\|(\frac 16, \frac 56)-(a_3, b_3)\|^2 +\|(\frac 56, \frac 56)-(a_3, b_3)\|^2}=\frac 2{9},\]
which occurs when $(a_3, b_3)=\te{center of } [\frac 13, \frac 23]\times [\frac23, 1]=(\frac 12, \frac 56)$. Moreover, the three points $(\frac 16, \frac 16)$, $(\frac 56, \frac 16)$ and $(\frac 12, \frac 56)$ are the centroids of their own Voronoi regions. Thus, $\set{(\frac 16, \frac 16), (\frac 56, \frac 16), (\frac 12, \frac 56)}$ forms an optimal set of three-means with quantization error $V_3=\frac 1{12}.$ Hence, the proposition follows.
\end{proof}

\begin{remark}
Due to symmetry, in addition to the optimal set given in Proposition~\ref{prop2}, there are three more optimal sets of three-means with quantization error $V_3=\frac 1{12}$ (see Figure~\ref{Fig3}).
\end{remark}

\section{Affine measures}

In this section, we show that the affine measure $P$ under consideration is the direct product of the Cantor distribution $P_c$.

For the rest of the article, by a word $\gs$ of length $k$ over the alphabet $\set{1, 2}$, it is meant $\gs:=\gs_1\gs_2\cdots \gs_k\in \set{1, 2}^k$, $k\geq 1$. By a word of length zero it is meant the empty word $\es$. $\set{1, 2}^\ast$ represents the set of all words over the alphabet $\set{1, 2}$ including the empty word $\es$. Length of a word $\gs \in \set{1, 2}^\ast$ is denoted by $|\gs|$. If $\gs=\gs_1\gs_2\cdots \gs_k$, we write $U_\gs:=U_{\gs_1}\circ U_{\gs_2}\circ \cdots \circ U_{\gs_k}$. $U_\es$ represents the identity mapping on $\D R$. By $u_\gs$ we represent the similarity ratio of $U_\gs$.  If $X_c$ is the random variable with distribution $P_c$, then $E(X_c)=\frac 12$ and $V(X_c)=\frac 18$  \cite{GL3}. For $\gs\in \set{1, 2}^\ast$, write $A(\gs):=U_\gs(\frac 12)$. Notice that for $\gs \in \set{1, 2}^\ast$, we have $\frac 12 (A(\gs 1)+A(\gs 2))=A(\gs)$, $u_\gs=\frac{1}{3^{|\gs|}}$, the contractive factor of $U_\gs , $ and for the empty word $\es$, $A(\es)=\frac 12$. For  $\gs \in \set{1, 2}^\ast$ define $A_\gs:=U_\gs[0, 1]$. For any positive integer $n$, by $2^{\ast n}$ it is meant the concatenation of the symbol 2 with itself $n$-times successively, i.e., $2^{\ast n}=222 \cdots (n \te{ times})$, with the convention that $2^{\ast 0}$ is the empty word. For any positive integer $k$, by $\set{1, 2}^{k\ast 2}$ it is meant the direct product of the set $\set{1, 2}^k$ with itself. By $\set{1, 2}^{0\ast2}$ it is meant the set $\set{(\es, \es)}$. Also, recall the notations defined in Section~2. Let us now introduce the map
$F : \D N^\ast \uu \set{(\gs, \infty) : \gs \in \D N^\ast } \to \set{1,2}^\ast$ such that
\begin{equation} \label{eq51}  F(x)=\left\{\begin{array} {ll}
f(\gs_1)f(\gs_2)\cdots f(\gs_{|\gs|}) &\te{ if } x=\gs=\gs_1\gs_2\cdots \gs_{|\gs|},\\
f(\gs_1)f(\gs_2)\cdots f(\gs_{|\gs|}, \infty) &\te{ if } x=(\gs_1\gs_2\cdots \gs_{|\gs|}, \infty),\\
\es  &\te{ if } x=\es,
\end{array}\right.
\end{equation}
where $f : \D N \uu \set{(n, \infty) : n \in \D N } \to \set{1,2}^\ast\setminus \set{\es}$ is such that
\begin{equation*}  f(x)=\left\{\begin{array} {ll}
2^{\ast (n-1)}1 &\te{ if } x=n \te{ for some } n \in \D N,\\
2^{\ast n} & \te{ if } x=(n, \infty) \te{ for some } n \in \D N.
\end{array}\right.
\end{equation*}
The function $f$ is one-to-one and onto, and consequently, $F$ is also one-to-one and onto. For any  $\gs \in \D N^\ast$, write $AF(\gs):=A(F(\gs))$ and $AF(\gs, \infty):=A(F(\gs, \infty))$.

The map $F$ is instrumental in converting the infinitely generated affine measure $P$ to a finitely generated affine measure $P_c\times P_c$.  Furthermore, to improve the clarity of the arguments, we will write $T_i$ for $S_{(i, j)}^{(1)}$, and $T_j$ for $S_{(i, j)}^{(2)}$, where $T_k$ for all $k\geq 1$ form an infinite collection of similarity mappings on $\D R$ such that $T_k(x)=\frac 1 {3^k} x+1-\frac 1{3^{k-1}}$ for all $x \in \D R$. Thus, if $\go=(i_1, j_1)(i_2, j_2)\cdots (i_n, j_n)$, then $S_{\go}^{(1)}=T_{i_1}\circ \cdots \circ T_{i_n}=T_{i_1i_2\cdots i_n}$ and $S_{\go}^{(2)}=T_{j_1}\circ \cdots \circ T_{j_n}=T_{j_1j_2\cdots j_n}$ for all $n\geq 1$. Again, $T_\es$ is the identity mapping on $\D R$.

\medskip

\begin{lemma} \label{lemma31}
Let $T_k$ for $k\geq 1$ be the infinite collection of similitudes defined above, and $U_1$ and $U_2$ be the similitudes generating the Cantor set. Then, for any $\gs \in \D N^\ast$ and $x \in \D R$, we have
$T_\gs(x)=U_{F(\gs)}(x).$
\end{lemma}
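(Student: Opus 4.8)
The plan is to prove $T_\gs(x)=U_{F(\gs)}(x)$ by induction on the length $|\gs|$; the only substantive point is the base case $|\gs|=1$, which is a short direct computation.

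First I would handle the base case. For $n\in\D N$ we have $F(n)=f(n)=2^{\ast(n-1)}1$, so $U_{F(n)}=U_2^{n-1}\circ U_1$, where $U_2^0$ denotes the identity. Since $U_2(x)=\frac13x+\frac23$ has $1$ as its fixed point, an easy induction gives the closed form $U_2^{m}(x)=1+\frac1{3^m}(x-1)$ for all $m\geq0$. Composing with $U_1(x)=\frac13x$ then yields
\[U_{F(n)}(x)=1+\frac1{3^{n-1}}\Big(\tfrac13x-1\Big)=\frac1{3^n}x+1-\frac1{3^{n-1}}=T_n(x),\]
which is exactly the formula defining $T_n$ in Remark~\ref{rem1}. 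The empty-word case is trivial, since $T_\es$ and $U_{F(\es)}=U_\es$ are both the identity on $\D R$. (Alternatively, one can simply observe that $T_n$ and $U_{F(n)}$ are affine maps of $\D R$ with the same slope $3^{-n}$ and agree at $x=1$, hence coincide.)

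Then I would run the induction. Given $\gs=\gs_1\gs_2\cdots\gs_k$ with $k\geq2$, set $\gt:=\gs_2\cdots\gs_k$. The definition of $F$ in~\eqref{eq51} gives the factorization $F(\gs)=f(\gs_1)\,F(\gt)$, and the relation $U_{\gg\gd}=U_\gg\circ U_\gd$ for words $\gg,\gd$ over $\set{1,2}$ turns this into $U_{F(\gs)}=U_{f(\gs_1)}\circ U_{F(\gt)}$. Applying the base case to the single symbol $\gs_1$ and the inductive hypothesis to the shorter word $\gt$ gives $U_{f(\gs_1)}=T_{\gs_1}$ and $U_{F(\gt)}=T_\gt$, so that $U_{F(\gs)}=T_{\gs_1}\circ T_\gt=T_\gs$, using $T_\gs=T_{\gs_1}\circ\cdots\circ T_{\gs_k}$ from Remark~\ref{rem1}. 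This closes the induction.

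I do not expect a genuine obstacle here: the computation is routine, and the only things to keep straight are the closed form of the iterate $U_2^{m}$ in the base case and the compatibility of the ``peel off the first letter'' decomposition $T_\gs=T_{\gs_1}\circ T_{\gs_2\cdots\gs_k}$ with the corresponding decomposition $F(\gs)=f(\gs_1)F(\gs_2\cdots\gs_k)$ of the coded word. One should also note in passing that $F$ maps $\D N^\ast$ into $\set{1,2}^\ast$, so that $U_{F(\gs)}$ is well defined; this is immediate from the definition of $f$.
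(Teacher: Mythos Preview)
Your proof is correct and follows essentially the same two-step structure as the paper: first establish $T_k=U_{F(k)}$ for single symbols $k\in\D N$, then extend to arbitrary words via the factorization $F(\gs_1\cdots\gs_k)=f(\gs_1)\cdots f(\gs_k)$. The only difference is in the single-symbol step: the paper argues by induction on $k$ (reducing $U_{F(k+1)}$ to $U_{F(k)}$ through a short manipulation), whereas you compute $U_{F(n)}=U_2^{n-1}\circ U_1$ directly from the closed form $U_2^{m}(x)=1+3^{-m}(x-1)$, which is arguably cleaner and avoids the auxiliary induction.
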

\begin{proof}  If $\gs=1$, then $T_1(x)=\frac 13 x=U_1(x)=U_{F(1)}(x)$ for any $x \in \D R.$  Assume that the lemma is true if $\gs=k$ for some positive integer $k$, i.e., $T_k(x)=U_{F(k)}(x)$.  Then,
\begin{align*} &U_{F(k+1)}(x)=U_{2^{\ast k}1}(x)=U_{2^{\ast (k-1)}21}(x)=U_{2^{\ast (k-1)}}U_{21}(x)=U_{2^{\ast (k-1)}}(\frac 19 x+\frac 23)\\
&=U_{2^{\ast (k-1)}1}(3(\frac 19 x+\frac 23))=U_{F(k)}(\frac 13x+2)=T_k(\frac 13x+2)=\frac 1 {3^k} (\frac 13x+2)+1-\frac 1{3^{k-1}}\\
&=\frac 1 {3^{k+1}}x+1-\frac 1{3^{k}}=T_{k+1}(x).
\end{align*}
Thus, by the Principle of Mathematical Induction, $T_k(x)=U_{F(k)}(x)$ for all $k\in \D N$. Again, for any $\gt, \gd \in \D N^\ast$, by \eqref{eq51}, it follows that $F(\gs\gd)=F(\gs)F(\gd)$. Hence, for any $\gs=\gs_1\gs_2\cdots\gs_n\in \D N^\ast$, $n\geq 1$, we have
\[T_\gs(x)=T_{\gs_1}\circ T_{\gs_2}\circ\cdots\circ T_{\gs_n}(x)=U_{F(\gs_1)}\circ U_{F(\gs_2)}\circ \cdots \circ U_{F(\gs_n)}(x)=U_{F(\gs)}(x),\]
which completes the proof.
\end{proof}

\begin{lemma} \label{lemma32} Let $\go\in \mathcal{I}^\ast$, and $F$ be the function as defined in \eqref{eq51}. Then for $r=1, 2$, we have
$
 AF(\go^{(r)})=S_\go^{(r)}(\frac 12)$, and  $AF(\go^{(r)}, \infty)=S_{\go^-(\go^{(1)}_{|\go|}+1,\,  \go^{(2)}_{|\go|}+1)}^{(r)}(\frac 12)+s^{(r)}_{\go^-(\go^{(1)}_{|\go|}+1,\,  \go^{(2)}_{|\go|}+1)}$.
\end{lemma}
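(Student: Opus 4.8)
The plan is to reduce everything to Lemma \ref{lemma31}, which already tells us that $T_\gs = U_{F(\gs)}$ on $\D R$ for every $\gs \in \D N^\ast$. First I would recall the identifications from Remark~\ref{rem1}: for $\go = (i_1,j_1)(i_2,j_2)\cdots(i_k,j_k) \in I^\ast$, the horizontal component satisfies $S_\go^{(1)} = T_{i_1 i_2 \cdots i_k} = T_{\go^{(1)}}$ and the vertical component satisfies $S_\go^{(2)} = T_{\go^{(2)}}$, where $\go^{(1)}, \go^{(2)} \in \D N^\ast$. Hence for $r = 1,2$,
\[
AF(\go^{(r)}) = A(F(\go^{(r)})) = U_{F(\go^{(r)})}(\tfrac12) = T_{\go^{(r)}}(\tfrac12) = S_\go^{(r)}(\tfrac12),
\]
using the definition $A(\gt) = U_\gt(\tfrac12)$ and Lemma~\ref{lemma31} with $\gs = \go^{(r)}$. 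This disposes of the first assertion with essentially no computation.

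For the second assertion the plan is to compute $AF(\go^{(r)},\infty)$ by unravelling the definition of $F$ on arguments of the form $(\gs,\infty)$. Writing $\go^{(1)} = i_1 i_2 \cdots i_k$, by \eqref{eq51} we have $F(\go^{(1)},\infty) = f(i_1)f(i_2)\cdots f(i_{k-1})\,f(i_k,\infty) = F(i_1\cdots i_{k-1})\,2^{\ast i_k}$, whereas $F(\go^{(1)}) = F(i_1\cdots i_{k-1})\,2^{\ast(i_k-1)}1$. Setting $\eta := F(i_1\cdots i_{k-1}) = F((\go^-)^{(1)})$, I would then use the composition rule for $U$ and the fact that $A(\gt) = U_\gt(\tfrac12)$ to write
\[
AF(\go^{(1)},\infty) = U_\eta\big(U_{2^{\ast i_k}}(\tfrac12)\big) = U_\eta\big(\tfrac{1}{3^{i_k}}\cdot\tfrac12 + 1 - \tfrac{1}{3^{i_k}}\cdot\tfrac12\big)
\]
— more precisely, $U_{2^{\ast i_k}}$ is an affine contraction of ratio $3^{-i_k}$ fixing the point $1$, so $U_{2^{\ast i_k}}(\tfrac12) = 1 - \tfrac12 \cdot 3^{-i_k}$. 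The claim is that this equals $S_{\go^-(i_k+1,\,\go^{(2)}_{|\go|}+1)}^{(1)}(\tfrac12) + s^{(1)}_{\go^-(i_k+1,\,\go^{(2)}_{|\go|}+1)}$. Now $S_{\go^-(i_k+1,\ast)}^{(1)} = T_{(\go^-)^{(1)}}\circ T_{i_k+1} = U_\eta \circ T_{i_k+1}$ by Lemma~\ref{lemma31} again, and $T_{i_k+1}(x) = 3^{-(i_k+1)}x + 1 - 3^{-i_k}$ while $s^{(1)}_{\go^-(i_k+1,\ast)} = s^{(1)}_{(\go^-)^{(1)}}\cdot 3^{-(i_k+1)} = u_\eta \cdot 3^{-(i_k+1)}$. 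So the right-hand side is $U_\eta\big(T_{i_k+1}(\tfrac12) + 3^{-(i_k+1)}\big) = U_\eta\big(3^{-(i_k+1)}\tfrac12 + 1 - 3^{-i_k} + 3^{-(i_k+1)}\big)$, and a one-line arithmetic check shows the inner argument equals $1 - \tfrac12 \cdot 3^{-i_k}$, matching the left-hand side. The case $r = 2$ is identical with $j_k$ in place of $i_k$.

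The only mild subtlety — and the step most prone to an indexing slip rather than a genuine obstacle — is keeping the bookkeeping of $\go^-$ versus $\go$ straight: one must check that $F((\go^-)^{(1)})$ is indeed the common prefix $\eta$ appearing both in $F(\go^{(1)},\infty)$ and in $F$ applied to the index string of $S_{\go^-(\go^{(1)}_{|\go|}+1,\,\go^{(2)}_{|\go|}+1)}^{(1)}$, which follows because prepending the letter $(i_k+1,\cdot)$ or $(i_k,\cdot)$ to $\go^-$ does not alter $\go^-$ itself, and $F$ respects concatenation (shown in the proof of Lemma~\ref{lemma31}). Once that alignment is in place, everything reduces to the single identity $U_{2^{\ast m}}(\tfrac12) = T_{m+1}(\tfrac12) + 3^{-(m+1)}$ for $m \in \D N$, which is the elementary computation $1 - \tfrac12\cdot 3^{-m} = \big(\tfrac12\cdot 3^{-(m+1)} + 1 - 3^{-m}\big) + 3^{-(m+1)}$. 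I would present the $r=1$ case in full and remark that $r=2$ follows by the same argument applied to the vertical components.
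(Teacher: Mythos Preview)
Your proposal is correct and follows essentially the same approach as the paper's proof. The first assertion is handled identically via Lemma~\ref{lemma31}; for the second, the paper rewrites $U_{2^{\ast i_k}}(\tfrac12)$ as $U_{F(i_k+1)}(U_1^{-1}(\tfrac12)) = T_{i_1\cdots i_{k-1}(i_k+1)}(\tfrac32)$ and then computes $S^{(1)}(\tfrac32)-S^{(1)}(\tfrac12)=s^{(1)}$, whereas you compute $U_{2^{\ast i_k}}(\tfrac12)=1-\tfrac12\cdot 3^{-i_k}$ directly and pull the additive constant $3^{-(i_k+1)}$ inside $U_\eta$ using affinity --- these are the same identity viewed from two ends, and the bookkeeping with $\eta=F((\go^-)^{(1)})$ is exactly what the paper does implicitly.
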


\begin{proof} By Lemma~\ref{lemma31}, we have
\begin{equation*} AF(\go^{(1)})=U_{F(\go^{(1)})}(\frac 12)=T_{\go^{(1)}}(\frac 12)=S_\go^{(1)}(\frac 12), \te{ and similarly } AF(\go^{(2)})=S_\go^{(2)}(\frac 12).\end{equation*}
Without any loss of generality, we can assume  $\go=(i_1, j_1)(i_2, j_2)\cdots(i_k, j_k)$ for $k\geq 1$. Then,
\begin{align*}
&AF(\go^{(1)}, \infty)=U_{F(i_1i_2\cdots i_k, \infty)}(\frac 12)=U_{F(i_1i_2\cdots i_{k-1})}\circ U_{F(i_k, \infty)}(\frac 12)=U_{F(i_1i_2\cdots i_{k-1})}\circ U_{2^{\ast i_k}}(\frac 12)\\
&=U_{F(i_1i_2\cdots i_{k-1})}\circ U_{2^{\ast i_k}1}(U_1^{-1}(\frac 12))=U_{F(i_1i_2\cdots i_{k-1})}\circ U_{F(i_{k}+1)}(\frac 32)=U_{F(i_1i_2\cdots i_{k-1}(i_{k}+1))}(\frac 32)\\
&=T_{i_1i_2\cdots i_{k-1}(i_{k}+1)}(\frac 32)=S_{\go^-(i_k+1,\, j_{k}+1)}^{(1)}(\frac 32).
\end{align*}
Since, $S_{(i_k+1,\, j_{k}+1)}^{(1)}(\frac 32)-S_{(i_k+1,\, j_{k}+1)}^{(1)}(\frac 12)=\frac 1{3^{i_k+1}} \frac 32+1-\frac 1{3^{i_k}}-\frac 1{3^{i_k+1}} \frac 12-1+\frac 1{3^{i_k}}=\frac 1{3^{i_k+1}},$ we have
\begin{align*}
&S_{\go^-(i_k+1,\, j_{k}+1)}^{(1)}(\frac 32)-S_{\go^-(i_k+1,\, j_{k}+1)}^{(1)}(\frac 12)= s_{\go^-}^{(1)}(S_{(i_k+1,\, j_{k}+1)}^{(1)}(\frac 32)-S_{(i_k+1,\, j_{k}+1)}^{(1)}(\frac 12))=s_{\go^-}^{(1)}\frac 1{3^{i_k+1}}\\
&=s_{\go^-(i_k+1, j_k+1)}^{(1)}=s^{(1)}_{\go^-(\go^{(1)}_{|\go|}+1,\,  \go^{(2)}_{|\go|}+1)}, \ \text{which yields}
\end{align*}
$AF(\go^{(1)}, \infty)=S_{\go^-(i_k+1,\, j_{k}+1)}^{(1)}(\frac 32)=S_{\go^-(\go^{(1)}_{|\go|}+1,\,  \go^{(2)}_{|\go|}+1)}^{(1)}(\frac 12)+s^{(1)}_{\go^-(\go^{(1)}_{|\go|}+1,\,  \go^{(2)}_{|\go|}+1)}.$  Similarly,
$AF(\go^{(2)}, \infty)=S_{\go^-(\go^{(1)}_{|\go|}+1,\,  \go^{(2)}_{|\go|}+1)}^{(2)}(\frac 12)+s^{(2)}_{\go^-(\go^{(1)}_{|\go|}+1,\,  \go^{(2)}_{|\go|}+1)}$.
\end{proof}

\begin{remark}
By Lemma~\ref{lemma4} and  Lemma~\ref{lemma32}, for any $\go \in \mathcal{I}^\ast$, we have
\begin{align*}
& a(\go)=(AF(\go^{(1)}), AF(\go^{(2)})),  \ \
 a(\go(\infty, \infty))=(AF(\go^{(1)}, \infty), AF(\go^{(2)}, \infty)),\\
& a(\go(\infty, \es))=(AF(\go^{(1)}, \infty), AF(\go^{(2)})), \ \ \text{and} \ \  a(\go(\es, \infty))=(AF(\go^{(1)}), AF(\go^{(2)}, \infty)).
\end{align*}
\end{remark}
The following example illustrates the outcome of the lemma above.

\begin{example} \label{ex1}
$a((1,1))=(AF(1), AF(1))=(A(1), A(1))=(\frac 16, \frac 16)$, \\
$a((1,1)(\infty, \es))=(AF(1, \infty), AF(1))=(A(2), A(1))=(\frac 56, \frac 16)$, \\
$a((1,1)(\es, \infty))=(AF(1), AF(1, \infty))=(A(1), A(2))=(\frac 16, \frac 56)$,\\
$a((1,1)(\infty, \infty))=(AF(1, \infty), AF(1, \infty))=(A(2), A(2))=(\frac 56, \frac 56)$, \\
 $a((1,1)(1,1))=(AF(11), AF(11))=(A(11), A(11))=(\frac 1{18}, \frac 1{18})$, \\
 $a((1,1)(1,1)(\infty, \es))=(AF(11, \infty), AF(11))=(A(12), A(11))=(\frac 5{18}, \frac 1{18})$,\\ $a((1,1)(1,1)(\es, \infty))=(AF(11), AF(11, \infty))=(A(11), A(12))=(\frac 1{18}, \frac 5{18})$, and\\ $a((1,1)(1,1)(\infty, \infty))=(AF(11, \infty), AF(11, \infty))=(A(12), A(12))=(\frac 5{18}, \frac 5{18})$, etc.
\end{example}

\medskip

\begin{lemma}\label{lemma33}
Let $\mu=\sum_{k=1}^\infty \frac 1{2^k} \mu\circ T_k^{-1}$. Then, for any $\gs\in \D N^\ast$, we have $\mu(T_\gs[0, 1])=P_c(A_{F(\gs)})$, where $P_c:=\frac 1 2 P_c\circ U_1^{-1} +\frac 1 2 P_c\circ U_2^{-1}$.
\end{lemma}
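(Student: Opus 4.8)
The plan is to compute both sides explicitly and to check that each equals $2^{-(\gs_1+\gs_2+\cdots+\gs_n)}$, where $\gs=\gs_1\gs_2\cdots\gs_n$ with each $\gs_i\in\D N$ (the case $\gs=\es$ being trivial).

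First I would determine $\mu(T_\gs[0,1])$. By Lemma~\ref{lemma2} and Remark~\ref{rem1}, the measure $\mu$ satisfies $\mu=\sum_{k=1}^\infty\frac1{2^k}\,\mu\circ T_k^{-1}$; iterating this $n$ times gives $\mu=\sum_{\gt\in\D N^n}2^{-(\gt_1+\cdots+\gt_n)}\,\mu\circ T_\gt^{-1}$, and evaluating on $T_\gs[0,1]$ yields
\[\mu(T_\gs[0,1])=\sum_{\gt\in\D N^n}2^{-(\gt_1+\cdots+\gt_n)}\,\mu\bigl(T_\gt^{-1}(T_\gs[0,1])\bigr).\]
The crucial point is disjointness: the first level images $T_k[0,1]=[\,1-3^{-(k-1)},\,1-2\cdot3^{-k}\,]$, $k\in\D N$, are pairwise disjoint and even separated by nonempty gaps, since $1-2\cdot3^{-k}<1-3^{-k}$; as each $T_\go$ is an injective affine map, it follows by induction on the level that the level-$n$ basic intervals $\set{T_\gt[0,1]:\gt\in\D N^n}$ are pairwise disjoint. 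Hence for $\gt\neq\gs$ we get $T_\gt^{-1}(T_\gs[0,1])\ii[0,1]=T_\gt^{-1}\bigl(T_\gs[0,1]\ii T_\gt[0,1]\bigr)=\es$, and since $\operatorname{supp}(\mu)\ci[0,1]$ that term vanishes. Only the term $\gt=\gs$ survives, so $\mu(T_\gs[0,1])=2^{-(\gs_1+\cdots+\gs_n)}\mu([0,1])=2^{-(\gs_1+\cdots+\gs_n)}$.

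The same argument applied to $P_c$ — iterating $P_c=\frac12 P_c\circ U_1^{-1}+\frac12 P_c\circ U_2^{-1}$ and using that $U_1[0,1]=[0,\frac13]$ and $U_2[0,1]=[\frac23,1]$ are disjoint — shows $P_c(A_\gn)=2^{-|\gn|}$ for every $\gn\in\set{1,2}^\ast$. Finally, from the definition of $f$ we have $|f(m)|=|2^{\ast(m-1)}1|=m$ for each $m\in\D N$, so by \eqref{eq51} the word $F(\gs)=f(\gs_1)f(\gs_2)\cdots f(\gs_n)$ has length $|F(\gs)|=\sum_{i=1}^n|f(\gs_i)|=\gs_1+\gs_2+\cdots+\gs_n$. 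Therefore $P_c(A_{F(\gs)})=2^{-|F(\gs)|}=2^{-(\gs_1+\cdots+\gs_n)}=\mu(T_\gs[0,1])$, which is the assertion. (Alternatively, Lemma~\ref{lemma31} gives $T_\gs[0,1]=U_{F(\gs)}[0,1]=A_{F(\gs)}$, and one may instead check that $P_c$ satisfies the fixed point equation defining $\mu$, so that $\mu=P_c$; but the cylinder computation above is the shorter route.) The one delicate point is the disjointness bookkeeping — using the gaps between basic intervals to kill all but one term of the iterated self-similar sum, with no hidden atomic contribution; granting that, the length identity $|F(\gs)|=\sum_i\gs_i$ is immediate from the definitions.
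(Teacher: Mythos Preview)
Your proof is correct and follows essentially the same route as the paper: both compute $\mu(T_\gs[0,1])$ and $P_c(A_{F(\gs)})$ explicitly as $2^{-(\gs_1+\cdots+\gs_n)}$ via the length identity $|F(\gs)|=\sum_i\gs_i$. The paper's proof is a terse one-liner that simply asserts $\mu(T_\gs[0,1])=2^{-(i_1+\cdots+i_k)}$ and $P_c(A_{F(\gs)})=2^{-|F(\gs)|}$ as known, whereas you supply the disjointness argument that justifies these cylinder-measure values; this extra care is a genuine improvement in rigor but not a different strategy.
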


\begin{proof} Without any loss of generality, let $\gs=i_1 i_2\cdots i_k$ for any $k\geq 1$. See that $F(\gs)=F(i_1)F(i_2)\cdots F(i_k)$, and thus $|F(\gs)|=|F(i_1)|+|F(i_2)|+\cdots+|F(i_k)|=i_1+i_2+\cdots+i_k . $  Consequently,
\[\mu(T_\gs[0, 1])=\frac 1{2^{i_1+i_2+\cdots+i_k}}=\frac 1{2^{|F(\gs)|}}=P_c(A_{F(\gs)}),\]
which proves the lemma.
\end{proof}


\begin{prop}\label{prop10} {\it Let $P$ be the affine measure. Then, $P=P_c\times P_c$, where $P_c$ is the Cantor distribution.}
\end{prop}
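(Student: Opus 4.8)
The plan is to show that $P$ and $P_c\times P_c$ put the same mass on every basic rectangle $J_\go$, $\go\in I^\ast$, and then to promote this to equality of the two measures by a $\pi$--system argument; a shorter route via uniqueness of the invariant measure is also available, and I describe it at the end.

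First I would identify each basic rectangle of $P$ with a product of two cylinder sets of the Cantor set. Since $S_\go^{(1)}=T_{\go^{(1)}}$ and $S_\go^{(2)}=T_{\go^{(2)}}$, Lemma~\ref{lemma31} gives $S_\go^{(1)}[0,1]=U_{F(\go^{(1)})}[0,1]=A_{F(\go^{(1)})}$ and, likewise, $S_\go^{(2)}[0,1]=A_{F(\go^{(2)})}$, so that
\[J_\go=S_\go\big([0,1]\times[0,1]\big)=A_{F(\go^{(1)})}\times A_{F(\go^{(2)})}.\]
Next, for $\go=(i_1,j_1)\cdots(i_k,j_k)\in I^k$ the definition of $f$ gives $|F(\go^{(1)})|=i_1+\cdots+i_k$ and $|F(\go^{(2)})|=j_1+\cdots+j_k$, whence
\[P(J_\go)=p_\go=\frac1{2^{\,i_1+\cdots+i_k+j_1+\cdots+j_k}}=\frac1{2^{|F(\go^{(1)})|}}\cdot\frac1{2^{|F(\go^{(2)})|}}=P_c\big(A_{F(\go^{(1)})}\big)\,P_c\big(A_{F(\go^{(2)})}\big),\]
the last equality being Lemma~\ref{lemma33} (equivalently: a generation-$m$ cylinder of $C$ has $P_c$--mass $2^{-m}$). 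Hence $P(J_\go)=(P_c\times P_c)(J_\go)$ for every $\go\in I^\ast$.

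Then I would argue that this forces $P=P_c\times P_c$. The family $\mathcal C:=\set{J_\go:\go\in I^\ast}$ is a $\pi$--system: if one index word is a prefix of the other the rectangles are nested, and otherwise they are disjoint, since the first-level intervals $T_i[0,1]$ ($i\in\D N$) are pairwise disjoint, so are the children of any $J_\tau$, and $S_\tau$ is injective. As $[0,1]^2=J_\es\in\mathcal C$, Dynkin's theorem gives $P=P_c\times P_c$ on $\sigma(\mathcal C)$. Both measures are concentrated on $S=\bigcap_k\bigcup_{\go\in I^k}J_\go\in\sigma(\mathcal C)$ --- clear for $P_c\times P_c$, and for $P$ because $\ol S\setminus S\ci(E\times\D R)\uu(\D R\times E)$ with $E$ a countable set of ``right-hand endpoints'', which is $P$--null since the marginal $\mu$ is non-atomic --- and inside $S$ the relatively closed pieces $J_\go\ii S$ shrink to points as $|\go|\to\infty$, hence generate the Borel $\sigma$--algebra of $S$. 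Thus $B\ii S\in\sigma(\mathcal C)$ for every Borel $B\ci\D R^2$, and so $P(B)=P(B\ii S)=(P_c\times P_c)(B\ii S)=(P_c\times P_c)(B)$.

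I expect the only real obstacle to be this last point: the rectangles $J_\go$ do not exhaust $C\times C$ (they omit the countable set of points whose $F$--address ends in an infinite string of $2$'s), so one must either carry out the null-set reduction above or bypass it altogether. The bypass is to verify directly that $Q:=P_c\times P_c$ satisfies the defining identity $Q=\sum_{i,j\geq1}\tfrac1{2^{i+j}}\,Q\circ S_{(i,j)}^{-1}$: writing $S_{(i,j)}=T_i\times T_j$ and using that the product of measures is linear in each factor, the right-hand side equals $\big(\sum_{i\geq1}\tfrac1{2^i}P_c\circ T_i^{-1}\big)\times\big(\sum_{j\geq1}\tfrac1{2^j}P_c\circ T_j^{-1}\big)$, so it suffices to check the one-dimensional identity $P_c=\sum_{k\geq1}\tfrac1{2^k}P_c\circ T_k^{-1}$; this follows by iterating $P_c=\tfrac12 P_c\circ U_1^{-1}+\tfrac12 P_c\circ U_2^{-1}$ along the branch $U_2$ (the tail $\tfrac1{2^n}P_c\circ U_2^{-n}$ has total mass $2^{-n}\to0$) together with $T_k=U_2^{k-1}\circ U_1$ from Lemma~\ref{lemma31}. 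The uniqueness of the measure obeying the defining identity, recalled in the introduction, then yields $P=P_c\times P_c$.
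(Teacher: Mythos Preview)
Your argument is correct and its core coincides with the paper's: both identify $J_\go=A_{F(\go^{(1)})}\times A_{F(\go^{(2)})}$ via Lemma~\ref{lemma31} and check $P(J_\go)=(P_c\times P_c)(J_\go)$ using Lemma~\ref{lemma33} and the formula $p_\go=2^{-|F(\go^{(1)})|-|F(\go^{(2)})|}$. The paper then simply asserts that the $J_\go$'s generate the relevant Borel $\sigma$-algebra and concludes.

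Where you differ is in rigor and in the alternative route. You supply the $\pi$-system/Dynkin step and deal explicitly with the fact that the $J_\go$'s miss a $P$-null set of ``all-$2$'' addresses, points the paper passes over without comment; this makes your first proof strictly more complete. Your second route, verifying directly that $P_c\times P_c$ satisfies the defining self-affinity relation and invoking uniqueness of the invariant measure, is not in the paper and is arguably cleaner: it sidesteps the $\sigma$-algebra bookkeeping entirely at the cost of one iteration argument for the one-dimensional identity $P_c=\sum_k 2^{-k}P_c\circ T_k^{-1}$.
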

\begin{proof} Borel $\gs$-algebra on the affine set is generated by all sets of the form $J_{(\gd, \gt)}$ for $(\gd, \gt)\in \mathcal{I}^\ast$, where $J_{(\gd,\gt)}=S_{(\gd,\gt)}([0, 1]\times [0,1])$. Notice that
\[J_{(\gd,\gt)}=T_{\gd}[0, 1]\times T_{\gt}[0, 1]=U_{F(\gd)}[0, 1]\times U_{F(\gt)}[0, 1]=A_{F(\gd)}\times A_{F(\gt)}.\]
Again, the sets of the form $A_\ga$, where $\ga \in\set{1, 2}^\ast$, generate the Borel $\gs$-algebra on the Cantor set $C$. Thus, we see that the Borel $\gs$-algebra of the affine set is the same as the product of the Borel $\gs$-algebras on the Cantor set. Moreover, for any $(\gd, \gt) \in \mathcal{I}^\ast$, by Remark~\ref{rem1} and Lemma~\ref{lemma33}, we have
\[P(J_{(\gd, \gt)})=\mu(T_{\gd}[0, 1]) \mu(T_{\gt}[0, 1])=P_c(A_{F(\gd)})P_c(A_{F(\gt)})=(P_c\times P_c)(A_{F(\gd)}\times A_{F(\gt)}).\]
Hence, the proposition follows.
\end{proof}

\begin{remark} By Proposition~\ref{prop10}, it follows that the optimal sets of $n$-means for $P$ are the same as the optimal sets $n$-means for the product measure $P_c\times P_c$ on the affine set. Moreover, for $k\geq 1$ we can write
\[P=P_c\times P_c=\sum_{(\gs, \gt) \in \set{1, 2}^{k\ast 2}}\frac 1{4^k} (P_c\times P_c)\circ (U_\gs, U_\gt)^{-1}, \]
where for $(x_1, x_2) \in \D R^2$, $(U_\gs, U_\gt)^{-1}(x_1, x_2)=(U_\gs^{-1}(x_1), U_\gt^{-1}(x_2))$.
\end{remark}

\section{Optimal sets of $n$-means for all $n\geq 4$}

In this section we will derive closed formulas to determine the optimal sets of $n$-means and the $n$th quantization error for all $n\geq 4$.
For $(\gs, \gt) \in \set{1, 2}^{k\ast 2}$, write $A_{(\gs,\gt)}:=A_\gs\times A_\gt$ and $U_{(\gs, \gt)}:=(U_\gs, U_\gt)$.

\begin{lemma} \label{lemma41}
Let $\ga$ be an optimal set of $n$-means with $n\geq 4$. Then, $\ga\ii  A_{(i, j)}\neq \es$ for all $1\leq i, j\leq 2$.

\end{lemma}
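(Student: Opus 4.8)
The plan is to show that an optimal set of $n$-means for $n \geq 4$ must place at least one point in each of the four ``corner'' squares $A_{(1,1)}, A_{(1,2)}, A_{(2,1)}, A_{(2,2)}$ into which the support splits at the first level. Since $P = P_c \times P_c$ by Proposition~\ref{prop10}, each square $A_{(i,j)}$ carries probability $\frac14$ and the conditional distribution on $A_{(i,j)}$ is an affine copy of $P$. First I would recall the base case: from Propositions~\ref{prop1} and \ref{prop2} and the remark following Proposition~\ref{prop2}, the value $V_3 = \frac1{12}$ is known, and hence $V_4 \leq V_3 = \frac1{12}$; in fact one exhibits the natural candidate $\set{(\frac16,\frac16),(\frac56,\frac16),(\frac16,\frac56),(\frac56,\frac56)}$, the four centroids of the $A_{(i,j)}$, whose distortion is $4\cdot\frac14(s^{(1)2}+s^{(2)2})\frac18 = 4\cdot\frac14\cdot\frac29\cdot\frac18 = \frac1{36}$ by Lemma~\ref{lemma5} (with $s^{(1)}=s^{(2)}=\frac13$), so $V_n \leq V_4 \leq \frac1{36}$ for all $n\geq 4$.

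The heart of the argument is a ``comparison/contradiction'' estimate. Suppose $\ga$ is an optimal set of $n$-means, $n\geq 4$, with $\ga \ii A_{(i_0,j_0)} = \es$ for some corner, say $A_{(1,1)}$. Then every point of $A_{(1,1)}$ is served by a quantizer outside $A_{(1,1)}$, so the contribution to the distortion from $A_{(1,1)}$ is at least the squared distance from $A_{(1,1)}$ to the complement of $A_{(1,1)}$, integrated against $P$ restricted to $A_{(1,1)}$. Concretely, any point of $\ga$ closest to mass in $A_{(1,1)} = [0,\frac13]^2$ must lie outside $[0,\frac13]^2$, so for $x \in A_{(1,1)}$ one has $\min_{a\in\ga}\|x-a\|^2 \geq (\text{dist}(x, \partial([0,\frac13]^2)\text{-complement side}))^2$; a clean lower bound is obtained by noting that the nearest such quantizer is separated from a positive-measure sub-rectangle of $A_{(1,1)}$ (e.g.\ $J_{(1,1)(1,1)}$, of probability $\frac1{16}$) by a gap bounded below by a concrete constant. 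Using Lemma~\ref{lemma5} and \eqref{eq2} to evaluate $\int_{A_{(1,1)}}\min_{a\in\ga}\|x-a\|^2\,dP$ from below, I expect a bound of the form $\int_{A_{(1,1)}}\min_{a\in\ga}\|x-a\|^2\,dP \geq c$ for an explicit constant $c > \frac1{36}$, which already contradicts $V_n \leq \frac1{36}$.

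The steps in order: (1) establish $V_n \leq \frac1{36}$ for $n\geq 4$ via the four-centroid set and Lemma~\ref{lemma5}; (2) fix an optimal $\ga$ and a corner $A_{(i_0,j_0)}$ disjoint from $\ga$, and identify the minimal separation between that corner (or a fixed positive-measure sub-rectangle of it) and any admissible quantizer, using that $\ga \ci [0,1]^2$ by Proposition~\ref{prop000}(iii); (3) lower-bound $\int_{A_{(i_0,j_0)}} \min_{a\in\ga}\|x-a\|^2\,dP$ by $P(A_{(i_0,j_0)})$ times the squared separation, or more carefully by splitting $A_{(i_0,j_0)}$ into sub-rectangles at a couple of levels and applying \eqref{eq2} and Lemma~\ref{lemma5} on each, mirroring the estimates in the proof of Proposition~\ref{prop2}; (4) observe this lower bound strictly exceeds $\frac1{36}$, contradicting optimality; (5) conclude $\ga \ii A_{(i,j)} \neq \es$ for all $1\leq i,j\leq 2$.

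The main obstacle will be step (3): getting the numerical lower bound on the distortion contribution from the empty corner to come out strictly above $\frac1{36}$ requires care, because a quantizer can sit just outside the corner (distance $0$ from one edge) and only a fraction of the corner's mass is genuinely far from it. The remedy, as in the proof of Proposition~\ref{prop2}, is not to use the crude ``distance to the complement'' bound but to pick a fixed sub-rectangle of $A_{(i_0,j_0)}$ that is a definite distance from the far side of the square (e.g.\ $J_{(i_0,j_0)(1,1)}$ or a union of a few such basic rectangles near the ``inner corner''), note that any quantizer serving it other than one inside $A_{(i_0,j_0)}$ lies at distance at least some explicit $\delta>0$, and bound that piece alone below by $P(\text{sub-rectangle})\cdot\delta^2$ plus the variance term from Lemma~\ref{lemma5}; one must also rule out the case $n \geq 4$ with two or more quantizers crowded near that corner's boundary but still outside it, which is handled by the same separation argument since the gap $\delta$ is a property of the geometry, not of how many quantizers are placed. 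Once $\delta$ and the sub-rectangle's probability are pinned down, the inequality $P(\text{sub-rectangle})\,\delta^2 > \frac1{36}$ (or the slightly sharper version with the variance correction) closes the argument.
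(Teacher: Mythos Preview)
Your plan differs from the paper's. The paper does not try to bound the contribution from a single empty corner; instead it assumes for contradiction that $\ga$ misses \emph{all four} corners simultaneously, uses the centroid identity $\sum_{(a,b)\in\ga}(a,b)P(M((a,b)|\ga))=(\tfrac12,\tfrac12)$ together with symmetry to constrain where the points of $\ga$ can sit (not all on one side of $x_1=\tfrac12$ or $x_2=\tfrac12$, not all on either of those lines), and then argues by a quadrant/moving-point argument, split according to $n\bmod 4$, that some point can be pushed into a corner to strictly reduce the distortion.

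Your approach has a genuine numerical gap at step~(3): the contribution $\int_{A_{(i_0,j_0)}}\min_{a\in\ga}\|x-a\|^2\,dP$ from the empty corner alone \emph{cannot} be forced above $\tfrac1{36}$. A single quantizer placed at $(\tfrac13+\epsilon,\tfrac16)$, just outside $A_{(1,1)}=[0,\tfrac13]^2$, already yields
\[
\int_{A_{(1,1)}}\|x-(\tfrac13+\epsilon,\tfrac16)\|^2\,dP=\tfrac14\Big(\tfrac19V(X_1)+\tfrac19V(X_2)+(\tfrac16-\tfrac13-\epsilon)^2\Big)\xrightarrow{\epsilon\to 0}\tfrac1{144}+\tfrac1{144}=\tfrac1{72}<\tfrac1{36},
\]
and more external points only lower this further. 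Your fallback to the sub-rectangle $J_{(1,1)(1,1)}$ does worse: it has probability $\tfrac1{16}$, its distance to the complement of $A_{(1,1)}$ is $\tfrac29$, and $\tfrac1{16}(\tfrac29)^2=\tfrac1{324}\ll\tfrac1{36}$. Proposition~\ref{prop000}(iii) does not by itself rule out quantizers near $(\tfrac13,\tfrac16)$, since a Voronoi region straddling $A_{(1,1)}$ and part of $A_{(2,1)}$ can have centroid in $(\tfrac13,\tfrac23)\times[0,\tfrac13]$. To make a contradiction you must also account for the distortion on the other three corners (which compete for the same nearby quantizers), and that forces you back toward a global placement analysis of the sort the paper carries out.
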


\begin{proof} Let $\ga$ be an optimal set of $n$-means for $n\geq 4$.
As the optimal points are the centroids of their own Voronoi regions we have $\ga \sci A_\es\times A_\es:=[0, 1]\times [0, 1]$.

Consider the four-point set $\gb$ given by $\gb=\set{(A(i), A(j)) : 1\leq i, j\leq 2}$. Then,
\begin{align*}
\int \min_{c\in \gb}\|x-c\|^2 dP=\sum_{i, j=1}^2 \int_{A_{(i,j)}}\|x- (A(i), A(j))\|^2 d(P_c\times P_c)=\sum_{i, j=1}^2\frac 14(\frac 19+\frac 19)\frac 18=\frac 1{36}.
\end{align*}
Since $V_4$ is the quantization error of four-means, we have $\frac 1{36} \geq V_4\geq V_n$.

Assume that $\ga$ does not contain any point from $\mathop{\uu}\limits_{i, j=1}^2A_{(i,j)}$. We know that
\begin{equation} \label{eq100}
\sum_{(a, b) \in \ga} (a, b) P(M(a, b)|\ga))=(\frac 12, \frac 12).
\end{equation}
If all the points of $\ga$ are below the line $x_2=\frac 12$, i.e., if $b<\frac 12$ then by \eqref{eq100}, we see that $\frac 12=\sum_{(a, b) \in \ga} b P(M(a, b)|\ga))<\sum_{(a, b) \in \ga} \frac 12 P(M(a, b)|\ga))=\frac 12$, which is a contradiction. Similarly, it follows that if all the points of $\ga$ are above the line $x_2=\frac 12$, or left of the line $x_1=\frac 12$, or right of the line $x_1=\frac 12$, a contradiction will arise.

Next, suppose that all the points of $\ga$ are on the line $x_2=\frac 12 $.  We will consider two cases: $n=4$ and $n>4.$  When $n=4 , $ let $\ga=\set{(a_i, \frac 12) : 1\leq i\leq 4}$ with $a_i<a_j$ for $i<j$. Due to symmetry, we can assume that the boundary of the Voronoi regions of the points $(a_1, \frac 12)$, $(a_2, \frac 12)$, $(a_3, \frac 12)$, and $(a_4, \frac 12)$ are respectively $x_1=\frac 16$, $x_1=\frac 12$, and $x_1=\frac 56$ yielding $\ga=\set{(\frac 1{18}, \frac 12), (\frac 5{18}, \frac 12), (\frac {13}{18}, \frac 12), (\frac {17}{18}, \frac 12)}$, and then writing $B:=A_{(11, 11)}\uu A_{(11, 12)}\uu A_{(11, 21)}\uu A_{(11,22)}$, by symmetry we have
\begin{align*}
&\int \min_{c\in \ga}\|x-c\|^2 dP=4 \mathop{\int}\limits_{B}\|x-(\frac1{ 18}, \frac 12)\|^2d(P_c\times P_c)\\
&=8 \mathop{\int}\limits_{A_{(11,11)}}\|x-(\frac1{ 18}, \frac 12)\|^2d(P_c\times P_c)+ 8 \mathop{\int}\limits_{A_{(11,12)}}\|x-(\frac1{ 18}, \frac 12)\|^2d(P_c\times P_c)\\
&=8(\frac{65}{5184}+\frac{17}{5184})=\frac{41}{324}>V_4,
\end{align*}
which is a contradiction. We consider the case $n>4$.
Since for any $(x_1, x_2)\in \mathop{\uu}\limits_{i, j=1}^2A_{ij}$,  $\min_{c\in\ga}\|(x_1, x_2)-c\|^2 \geq \frac 1{36}$, we have
\begin{align*}
&\int \min_{c\in \ga}\|x-c\|^2 dP=\sum_{i, j=1}^2 \int_{A_{(i, j)}}\min_{c\in \ga}\|x-c\|^2d(P_c\times P_c)\geq \sum_{i, j=1}^2\int_{A_{(i, j)}}\frac 1 {36} d(P_c\times P_c)=\frac 1{36},
\end{align*}
which implies $\frac 1 {36}\geq V_4>V_n$, a contradiction. Thus, we see that all the points of $\ga$ can not lie on $x_2=\frac12$. Similarly, all the points of $\ga$ can not lie on $x_1=\frac 12$.

Notice that the lines $x_1=\frac 12$ and $x_2=\frac 12$ partition the square $[0, 1]\times [0, 1]$ into four quadrants with center $(\frac 12, \frac 12)$.
If $n=4k$ for some positive integer $k$, due to symmetry, we can assume that each quadrant contains $k$-points from the set $\ga$. But then, any of the $k$ points in the quadrant containing a basic rectangle $A_{(i, j)}$ can be moved to $A_{(i, j)}$ which strictly reduce the quantization error, and it gives a contradiction as we assumed that the set $\ga$ is an optimal set of $n$-means and $\ga$ does not contain any point from $A_{(i, j)}$ for $1\leq i, j\leq 2$.

If $n=4k+1, 4k+2$, or  $n=4k+3$, then, again due to symmetry, each quadrant gets at least $k$ points. Then, as in the case $n=4k,$ here also, one can strictly reduce the quantization error by moving a point in the quadrant containing a basic rectangle $A_{(i, j)}$ to $A_{(i, j)}$ for $1\leq i, j\leq 2$, which is a contradiction.

Thus, we have proved that $\ga\ii  A_{(i, j)}\neq \es$ for all $1\leq i, j\leq 2$.
\end{proof}

\begin{lemma} \label{lemma42} Let $\ga$ be an optimal set of $n$-means with $n\geq 4$. Then, $\ga \sci \mathop{\uu}\limits_{i, j=1}^2 A_{(i,j)}$.

\end{lemma}

\begin{proof}  By Lemma 5.1, we know that $\ga\ii  A_{(i, j)}\neq \es$ for all $1\leq i, j\leq 2$.  Now, we will prove the statement by considering four distinct cases:

\tit{Case 1: $n=4 k$ for some integer $k\geq 1$.}

In this case, due to symmetry, we can assume that $\ga$ contains $k$ points from each of $A_{(i, j)}$, otherwise, quantization error can be reduced by redistributing the points of $\ga$ equally among $A_{(i, j)}$ for $1\leq i, j\leq 2$, and so $\ga \sci \mathop{\uu}\limits_{i, j=1}^2 A_{(i,j)}$.

\tit{Case 2: $n=4 k+1$ for some integer $k\geq 1$. }

In this case, again due to symmetry, we can assume that $\ga$ contains $k$ points from each of $A_{(i,j)}, $ and if possible, one point, say $(a, b)$, from $A_{(\es, \es)}\setminus \mathop{\uu}\limits_{i, j=1}^2 A_{(i,j)}$.  By symmetry, one can assume that $(a,b)$ is the midpoint of the line segment joining any two centroids of the basic rectangles $A_{(i, j)}$ for $1\leq i, j\leq 2$. Let us first take $(a, b)=(\frac 12, \frac 12)$ which is the center of the affine set. For simplicity, we first assume $k=1$, i.e., $n=5$. Then, $\ga$ contains only one point from each of $A_{(i, j)}$. Let $(a_1, b_1)$ be the point that $\ga$ takes from $A_{(1,1)}$. As $(\frac 12, \frac 12)$ lies on the diagonal $x_2=x_1$, due to symmetry we can also assume that $(a_1, b_1)$ lies on the diagonal $x_2=x_1$.
By Proposition~\ref{prop000}, we have $P(M((\frac 12, \frac 12)|\ga))>0$. This yields that $\frac 12 ((a_1, b_1)+(\frac 12, \frac 12))<(\frac 13, \frac 13)$ which implies $a_1<\frac 16$ and $b_1<\frac 16$. Then, we see that
\[\frac 1{36}=V_4\approx V_5=4 \mathop{\int}\limits_{A_{(1,1)}}\min_{c\in\set{(a_1, b_1), (\frac 12, \frac 12)}}\|x-c\|^2dP>\int\min_{c\in \gb}\|x-c\|^2 dP=\frac{2}{81}\geq V_5,\]
where $\gb=\set{(\frac 1{18}, \frac 1{18}), (\frac 1{18}, \frac 5{18}), (\frac 5{6}, \frac 1{6}), (\frac 1{6}, \frac 5{6}), (\frac 5{6}, \frac 5{6})}$,
which is a contradiction. Similarly, if we take $(a, b)$ as the midpoint of a line segments joining the centroids of any two adjacent basic rectangles $A_{(i, j)}$ for $1\leq i, j\leq 2$, contradiction arises. Proceeding in the similar way, by taking $k=2, 3, \cdots$, we see that contradiction arises at each value $k$ takes.  Therefore, $\ga \sci \mathop{\uu}\limits_{i, j=1}^2 A_{(i,j)}$.

\tit{Case 3: $n=4 k+2$ for some integer $k\geq 1$.}

In this case, due to symmetry, we can assume that $\ga$ contains $k$ points from each of $A_{(i,j)}$, and if possible, two points, say $(a_1, b_1)$ and $(a_2, b_2)$, from $A_{(\es, \es)}\setminus \mathop{\uu}\limits_{i, j=1}^2 A_{(i,j)}$. Then, by symmetry, we can assume that $(a_1, b_1)$ lies on the midpoint of the line segment joining the centroids of $A_{(1,1)}$,  $A_{(2,1)}$;  and $(a_2, b_2)$ lies on the midpoint of the line segment joining the centroids of $A_{(1,2)}$ and $A_{(2,2)}$. As in Case 2, this leads to a contradiction. Thus, $\ga \sci \mathop{\uu}\limits_{i, j=1}^2 A_{(i,j)}$.

\tit{Case 4: $n=4 k+3$ for some integer $k\geq 1$.}
Due to symmetry, in this case, we can assume that each of $A_{(1,1)}$ and $A_{(2,1)}$ gets $k+1$ points; each of $A_{(1,2)}$ and $A_{(2,2)}$ gets $k$ points. The remaining one point lies on the midpoint of the line segment joining the centroids of $A_{(1,2)}$ and $A_{(2,2)}$. But, in that case, proceeding as in Case 2, we can show that a contradiction arises. Thus, $\ga \sci \mathop{\uu}\limits_{i, j=1}^2 A_{(i,j)}$.

We have shown that in all possible cases $\ga \sci \mathop{\uu}\limits_{i, j=1}^2 A_{(i,j)}$; hence, the lemma follows.
\end{proof}


\begin{cor} \label{cor21} {\it  The set $\set{(\frac 16, \frac 16), (\frac 56, \frac 16), (\frac 16, \frac 56), (\frac 56, \frac 56)}$ is a unique optimal set of four-means of the affine measure $P$ with quantization error $V_4=\frac 1{36}.$}
\end{cor}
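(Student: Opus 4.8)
The plan is to derive the statement from Lemma~5.1 and Lemma~5.2 together with the centroid characterization in Proposition~\ref{prop000} and the product structure $P=P_c\times P_c$ of Proposition~\ref{prop10}. Since $P_c$ is non-atomic, so is $P$, and hence an optimal set $\ga$ of four-means has exactly four elements. By Lemma~5.1 each of the four basic rectangles $A_{(i,j)}$, $1\le i,j\le2$, contains at least one point of $\ga$; as $\ga$ has only four points, it contains exactly one point, say $c_{(i,j)}$, from each $A_{(i,j)}$, and by Lemma~5.2 these exhaust $\ga$. Thus $\ga=\set{c_{(1,1)},c_{(2,1)},c_{(1,2)},c_{(2,2)}}$ with $c_{(i,j)}\in A_{(i,j)}$ for all $i,j$.

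The heart of the argument is to show that, at the optimum, no portion of one corner rectangle $A_{(k,l)}$ is quantized to the generator $c_{(i,j)}$ of a different rectangle; equivalently, $M(c_{(i,j)}\mid\ga)\ii\te{supp}(P)=A_{(i,j)}\ii\te{supp}(P)$ for each $i,j$. This is not automatic from $c_{(i,j)}\in A_{(i,j)}$, because each corner square has diameter $\frac{\sqrt2}{3}$, larger than the mutual distance $\frac13$ of the two edge-adjacent corner squares, so a badly placed generator could a priori be closer to points of a neighbouring square than that square's own generator. I would handle this with a clipping estimate: since $c_{(i,j)}\in A_{(i,j)}$ and $\te{dist}(A_{(i,j)},A_{(k,l)})\ge\frac13$ for $(k,l)\ne(i,j)$, one has, for every $x\in A_{(i,j)}$, $\min_{c\in\ga}\|x-c\|^2\ge\min\bigl(\|x-c_{(i,j)}\|^2,\frac19\bigr)$; integrating and using Proposition~\ref{prop10} to rescale each $A_{(i,j)}$ to a copy of $P_c\times P_c$, together with the parallel-axis identity $\int_{A_{(i,j)}}\|x-c\|^2\,dP=\frac1{144}+\frac14\|c-(A(i),A(j))\|^2$ (here $(A(i),A(j))$ is the centroid of $A_{(i,j)}$, computed via Lemma~\ref{lemma32} and Example~\ref{ex1}), one shows $\int_{A_{(i,j)}}\min\bigl(\|x-c\|^2,\frac19\bigr)\,dP\ge\frac1{144}$ for every $c\in A_{(i,j)}$, with equality only for $c=(A(i),A(j))$. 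The point is that the \emph{saving} produced by the clip $\frac19$ on the far corner of $A_{(i,j)}\ii\te{supp}(P)$ — a set of small $P_c\times P_c$-mass — can never outweigh the penalty $\frac14\|c-(A(i),A(j))\|^2$ for being off-centre. The routine computation bounding that clipped mass is the main obstacle; it is of the same flavour as the distance estimates in Section~3 and in the proofs of Lemma~5.1 and Lemma~5.2.

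Summing over $i,j$ then gives $V_4=\int\min_{c\in\ga}\|x-c\|^2\,dP\ge\sum_{i,j=1}^2\frac1{144}=\frac1{36}$, while the four-point set $\set{(\frac16,\frac16),(\frac56,\frac16),(\frac16,\frac56),(\frac56,\frac56)}$ already gives distortion $\sum_{i,j=1}^2\frac14\bigl(\frac19+\frac19\bigr)\frac18=\frac1{36}$ (the computation in the proof of Lemma~5.1), so $V_4=\frac1{36}$. Equality throughout forces $c_{(i,j)}=(A(i),A(j))$ for each $i,j$ — that is, $(\frac16,\frac16),(\frac56,\frac16),(\frac16,\frac56),(\frac56,\frac56)$ — which one checks (Proposition~\ref{prop000}) are indeed the centroids of their own Voronoi regions, namely the four quadrants cut out by $x_1=\frac12$ and $x_2=\frac12$. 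Uniqueness is then immediate: any optimal four-means set must be this one, since the conditional expectation over each $A_{(i,j)}$ is a single well-defined point.
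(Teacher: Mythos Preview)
Your approach is essentially the paper's---deduce from Lemmas~5.1 and~5.2 that an optimal four-point set has exactly one point in each $A_{(i,j)}$, then identify those points as the centroids $(A(i),A(j))$---but you go further in one important respect. The paper simply asserts the corollary ``follows from Lemma~5.2'' and, in the subsequent Lemma~5.5, writes $V_n=\sum_{i,j}\int_{A_{(i,j)}}\min_{a\in\gb_{ij}}\|x-a\|^2\,dP$ without comment, tacitly assuming that on each $A_{(i,j)}$ the nearest generator is the one lying in that square. You correctly flag that this is not automatic: adjacent corner squares are only $\tfrac13$ apart while each has diameter $\tfrac{\sqrt2}{3}$, so a generator near the inner corner of its square could in principle poach mass from a neighbour.

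Your clipping bound $\min_{c\in\ga}\|x-c\|^2\ge\min(\|x-c_{(i,j)}\|^2,\tfrac19)$ is the right device, and the rescaled claim $\int\min(\|x-c\|^2,1)\,d(P_c\times P_c)\ge\tfrac14$ for all $c\in[0,1]^2$ does hold---a crude check at the extreme $c=(0,0)$ gives a value of at least $\tfrac49$ with plenty of margin, and near the centre the clip is inactive. So your proof is sound, and in fact patches a step the paper leaves implicit; the only thing missing is to actually write out that computation rather than call it routine.
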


\begin{remark} Let $\ga$ be an optimal set of $n$-means, and $n_{ij}=\te{card}(\gb_{ij})$ where $\gb_{ij}=\ga\ii A_{(i, j)}$ for $1\leq i, j\leq 2$. Then,
$0\leq |n_{ij}-n_{pq}|\leq 1$ for $1\leq i, j, p, q\leq 2$.

\end{remark}

\begin{lemma} \label{lemma22} Let $n\geq 4$ and $\ga$ be an optimal set of $n$-means for the product measure $P_c\times P_c$. For $1\leq i, j\leq 2$, set $\gb_{ij}:=\ga\ii A_{(i, j)}$, and let $n_{ij}=\te{card}(\gb_{ij})$. Then, $U_{(i, j)}^{-1}(\gb_{ij})$ is an optimal set of $n_{ij}$-means, and $V_n=\mathop{\sum}\limits_{i, j=1}^2 \frac 1{36} V_{n_{ij}}$.

\end{lemma}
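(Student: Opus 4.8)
The plan is to reduce everything to two facts about the decomposition $\ga=\gb_{11}\uu\gb_{12}\uu\gb_{21}\uu\gb_{22}$, which is already available: since $\ga\ci\bigcup_{i,j=1}^2A_{(i,j)}$ and each $\gb_{ij}=\ga\ii A_{(i,j)}$ is non-empty (the two preceding lemmas of this section), the union is disjoint and $n=\sum_{i,j}n_{ij}$ with every $n_{ij}\ge 1$. The two facts are: (a) the \emph{decomposition identity} $V_n=\sum_{i,j=1}^2\tfrac1{36}\,V\!\big(U_{(i,j)}^{-1}(\gb_{ij})\big)$; and (b) the \emph{optimality transfer}, that each $U_{(i,j)}^{-1}(\gb_{ij})$ is an optimal set of $n_{ij}$-means. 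Granting (a) and (b), substituting $V\!\big(U_{(i,j)}^{-1}(\gb_{ij})\big)=V_{n_{ij}}$ into (a) yields both assertions of the lemma at once.

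The crux is the geometric input needed for (a): the Voronoi diagram of $\ga$ refines the partition $\{A_{(i,j)}:1\le i,j\le 2\}$ up to $P$-null sets, i.e.\ for $P$-a.e.\ $x\in A_{(i,j)}$ the point of $\ga$ nearest to $x$ lies in $\gb_{ij}$. I would establish this in two steps. First, every $(a,b)\in\gb_{ij}$ serves positive $P$-mass in its own rectangle: by the symmetries of the unit square one may take $(i,j)=(1,1)$, and then $\mathrm{supp}\,P\setminus A_{(1,1)}\ci\{x_1+x_2\ge\tfrac23\}$, whereas $(a,b)\in A_{(1,1)}$ forces $a+b\le\tfrac23$; since by Proposition~\ref{prop000}(iii) the point $(a,b)$ is the centroid of its Voronoi region and by part (i) that region has positive mass, the region cannot lie (mod $P$) in $\mathrm{supp}\,P\setminus A_{(1,1)}$, for otherwise $a+b=\tfrac23$ and the region would be supported on the $P$-null set $\{x_1+x_2=\tfrac23\}\ii\mathrm{supp}\,P$. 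Second, no Voronoi cell straddles two of the rectangles: for the three \emph{diagonal} pairs this is automatic, because $\mathrm{dist}(A_{(i,j)},A_{(p,q)})=\tfrac{\sqrt2}{3}=\mathrm{diam}\,A_{(i,j)}$, so together with $\gb_{ij}\neq\es$ the point of $\gb_{ij}$ is strictly closer than any point of $\gb_{pq}$ to $P$-a.e.\ $x\in A_{(i,j)}$; for the four \emph{adjacent} pairs the two rectangles are separated only by a gap of width $\tfrac13$, and a cell bridging that gap would pin one of the two competing points into the far third of its own rectangle, from which the $P$-mass it must serve on its own side can be served more cheaply after a relocation of points — a contradiction with optimality, obtained by an explicit distortion estimate of exactly the kind used in Lemmas~\ref{lemma67}, \ref{lemma68} and Proposition~\ref{prop2}. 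Once the refinement is in hand, $V_n=\sum_{i,j}\int_{A_{(i,j)}}\min_{c\in\gb_{ij}}\|x-c\|^2\,dP$; and because $U_{(i,j)}=(U_\gs,U_\gt)$ with $\gs,\gt\in\{1,2\}$ is a similarity of $\D R^2$ of ratio $\tfrac13$ and $(P_c\times P_c)(A_{(i,j)}\ii B)=\tfrac14\,(P_c\times P_c)\big(U_{(i,j)}^{-1}B\big)$, the substitution $y=U_{(i,j)}^{-1}(x)$ turns the $(i,j)$-term into $\tfrac14\cdot\tfrac19\,V\!\big(U_{(i,j)}^{-1}(\gb_{ij})\big)$, which is (a).

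For (b) I would argue by contradiction, first recording the companion upper bound: for any positive integers $m_{ij}$ with $\sum m_{ij}=n$, scaling optimal $m_{ij}$-sets into the rectangles by $U_{(i,j)}$ and bounding the global nearest-neighbour distance by the in-rectangle one gives $V_n\le\sum_{i,j}\tfrac1{36}V_{m_{ij}}$. If, say, $U_{(1,1)}^{-1}(\gb_{11})$ were not optimal, replace $\gb_{11}$ in $\ga$ by $U_{(1,1)}(\gd)$ for $\gd$ an optimal $n_{11}$-set, keeping the other $\gb_{ij}$ fixed; by the same bound the new $n$-point set has distortion at most $\sum_{(k,l)\neq(1,1)}\tfrac1{36}V\!\big(U_{(k,l)}^{-1}(\gb_{kl})\big)+\tfrac1{36}V_{n_{11}}$, which by (a) and $V_{n_{11}}<V\!\big(U_{(1,1)}^{-1}(\gb_{11})\big)$ is strictly below $V_n$, contradicting optimality of $\ga$. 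Hence every $U_{(i,j)}^{-1}(\gb_{ij})$ is optimal, and (a) becomes $V_n=\sum_{i,j}\tfrac1{36}V_{n_{ij}}$.

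The change of variables and the swap-and-compare step of (b) are routine; the genuine obstacle is the refinement in the second step of (a) for the \emph{adjacent} rectangles, i.e.\ ruling out an optimal Voronoi cell that reaches across a gap of width $\tfrac13$. This is the planar counterpart of the statement that optimal quantizers respect the self-similar splitting, and I expect it — as in the one-dimensional and Sierpi\'nski-carpet treatments — to require a case-by-case distortion comparison rather than a one-line geometric argument.
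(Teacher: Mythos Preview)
Your swap-and-compare argument for the optimality transfer (your step (b)) is exactly what the paper does. The divergence is at your step (a): the paper writes $V_n=\sum_{i,j}\int_{A_{(i,j)}}\min_{a\in\gb_{ij}}\|x-a\|^2\,d(P_c\times P_c)$ as though it followed immediately from $\ga=\bigcup_{i,j}\gb_{ij}$ (Lemma~\ref{lemma41}) and then runs the contradiction, whereas you correctly observe that this equality is \emph{not} automatic --- it is equivalent to the Voronoi diagram of $\ga$ refining the four-rectangle partition $P$-a.e.\ --- and you try to supply it. In short, you have put your finger on a step the paper takes for granted.

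Your refinement argument, however, is incomplete at precisely the place you flag. For diagonally opposite rectangles the distance-versus-diameter comparison works (minor slip: there are two such pairs, not three). For the four adjacent pairs the gap $\tfrac13$ is strictly less than the diameter $\tfrac{\sqrt2}{3}$, so a Voronoi cell based in one rectangle \emph{can} in principle reach into its neighbour, and you give only a programme (``pin a point into the far third and run a distortion estimate as in Lemmas~\ref{lemma67}--\ref{lemma68}'') rather than a proof; for general $n$ this would need either a uniform bound or an inductive setup, neither of which is written down. Thus your proposal and the paper's argument share the same unjustified step --- the paper passes over it in silence, while you name it honestly but do not close it. A small further remark: your Step~1 (each $(a,b)\in\gb_{ij}$ serves positive mass in its own rectangle) is correct but redundant once Step~2 is in hand, since the centroid of a cell lying $P$-a.e.\ in a single $A_{(p,q)}$ must itself lie in $A_{(p,q)}$, and the four rectangles are pairwise disjoint.
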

\begin{proof} For $n\geq 4$, by Lemma~\ref{lemma41}, we have $\ga=\mathop{\uu}\limits_{i, j=1}^2\gb_{ij}$, $n=\mathop{\sum}\limits_{i,j=1}^2 n_{ij}$, and so
\[
V_n=\mathop{\sum}\limits_{i, j=1}^2\mathop{\int}\limits_{A_{(i, j)}}\mathop{\min}\limits_{a\in \gb_{ij}}\|x-a\|^2 d(P_c\times P_c).\]
If $U_{(1, 1)}^{-1}(\gb_{11})$ is not an optimal set of $n_{11}$-means for $P_c\times P_c$, then there exists a set $\gg_{11}\sci \D R^2$ with $\te{card}(\gg_{11})=n_{11}$ such that
$\int \min_{a\in \gg_{11}} \|x-a\|^2 d(P_c\times P_c)<\int \min_{a\in U_{(1,1)}^{-1}(\gb_{11})} \|x-a\|^2 d(P_c\times P_c)$. But then, $\gd:=U_{(1,1)}(\gg_{11})\uu \gb_{12}\uu \gb_{21}\uu \gb_{22}$ is a set of cardinality $n$ and it satisfies $\int\min_{a\in \gd}\|x-a\|^2 d(P_c\times P_c)<\int\min_{a\in \ga}\|x-a\|^2 d(P_c\times P_c), $ contradicting the fact that $\ga$ is an optimal set of $n$-means for $P_c\times P_c$. Similarly, it can be proved that $U_{(1,2)}^{-1}(\gb_{12})$, $U_{(2,1)}^{-1}(\gb_{21})$, and $U_{(2,2)}^{-1}(\gb_{22})$ are optimal sets of $n_{12}$-, $n_{21}$-, and $n_{22}$-means respectively. Thus,
$$V_n=\mathop{\sum}\limits_{i, j=1}^2\frac 1 4\mathop{\int}\mathop{\min}\limits_{a\in \gb_{ij}}\|x-a\|^2 d((P_c\times P_c)\circ U_{(i,j)}^{-1})=\mathop{\sum}\limits_{i, j=1}^2\frac 1{36}\mathop{\int}\mathop{\min}\limits_{a\in U_{(i,j)}^{-1}(\gb_{ij})}\|x-a\|^2 dP
=\mathop{\sum}\limits_{i, j=1}^2\frac 1 {36}V_{n_{ij}},$$
which gives the lemma.
\end{proof}

\begin{prop} \label{prop31}
{\it Let $n\in \D N$ be such that $n=4^{\ell(n)}$ for some positive integer $\ell(n)$. Then, the set
\[\ga_{4^{\ell(n)}}:=\mathop{\uu}\limits_{(\gs, \gt) \in \set{1, 2}^{\ell(n)\ast 2}}\set{(A(\gs), A(\gt))}\] forms a unique optimal set of $n$-means for the affine measure $P$ with quantization error\\
$V_{4^{\ell(n)}}=\frac 1 4 \frac1{9^{\ell(n)}}.$}
\end{prop}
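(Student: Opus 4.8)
The plan is to prove the statement by induction on $\ell(n)$, using Proposition~\ref{prop10} to replace $P$ by the product measure $P_c\times P_c$ throughout and to work with the pairwise disjoint basic rectangles $A_{(i,j)}=A_i\times A_j$, $1\le i,j\le 2$. The base case $\ell(n)=1$, i.e.\ $n=4$, is exactly Corollary~\ref{cor21}: it already gives both the uniqueness of $\ga_4$ and the value $V_4=\frac1{36}=\frac14\cdot\frac19$.

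For the inductive step, assume the statement holds for $\ell-1\ge1$ and let $n=4^{\ell}$. Let $\ga$ be an optimal set of $n$-means, and set $\gb_{ij}=\ga\ii A_{(i,j)}$ and $n_{ij}=\te{card}(\gb_{ij})$ for $1\le i,j\le2$. By Lemma~\ref{lemma41} we have $\ga\sci\mathop{\uu}\limits_{i,j=1}^2 A_{(i,j)}$, and since moreover $\ga\ii A_{(i,j)}\neq\es$ for all $i,j$, the sets $\gb_{ij}$ are nonempty and partition $\ga$; hence $\sum_{i,j}n_{ij}=4^{\ell}$, and by the remark immediately following Corollary~\ref{cor21} one has $|n_{ij}-n_{pq}|\le1$ for all indices. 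The first key step is to deduce from this that $n_{ij}=4^{\ell-1}$ for every $i,j$: if the four integers $n_{ij}$ were not all equal, then, differing pairwise by at most $1$, they would take exactly two consecutive values $m$ and $m+1$; writing $r$ for the number of indices with $n_{ij}=m+1$, we would have $4m+r=4^{\ell}$, so $r$ is divisible by $4$ and thus $r\in\set{0,4}$ — either way all four $n_{ij}$ coincide, a contradiction. Hence $n_{ij}=\tfrac14\cdot4^{\ell}=4^{\ell-1}$.

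Next I would invoke Lemma~\ref{lemma22}: each $U_{(i,j)}^{-1}(\gb_{ij})$ is an optimal set of $4^{\ell-1}$-means for $P_c\times P_c$, and $V_n=\sum_{i,j}\frac1{36}V_{n_{ij}}=4\cdot\frac1{36}V_{4^{\ell-1}}$. By the induction hypothesis the unique optimal set of $4^{\ell-1}$-means is $\ga_{4^{\ell-1}}=\mathop{\uu}\limits_{(\gs,\gt)\in\set{1,2}^{(\ell-1)\ast2}}\set{(A(\gs),A(\gt))}$, with $V_{4^{\ell-1}}=\frac14\cdot\frac1{9^{\ell-1}}$. Therefore $\gb_{ij}=U_{(i,j)}(\ga_{4^{\ell-1}})$ for each $i,j$; and since $U_{(i,j)}(A(\gs),A(\gt))=(U_iU_\gs(\tfrac12),\,U_jU_\gt(\tfrac12))=(A(i\gs),A(j\gt))$, taking the union over $1\le i,j\le2$ reconstitutes exactly $\mathop{\uu}\limits_{(\gs,\gt)\in\set{1,2}^{\ell\ast2}}\set{(A(\gs),A(\gt))}=\ga_{4^{\ell}}$. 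This simultaneously shows that $\ga_{4^{\ell}}$ is optimal and that it is the only optimal set, and $V_{4^{\ell}}=4\cdot\frac1{36}\cdot\frac14\cdot\frac1{9^{\ell-1}}=\frac19\cdot\frac14\cdot\frac1{9^{\ell-1}}=\frac14\cdot\frac1{9^{\ell}}$, completing the induction. As a consistency check, the value for $\ga_{4^{\ell}}$ can also be obtained directly from \eqref{eq2} and Lemma~\ref{lemma5}, since each of its $4^{\ell}$ points is the $P$-centroid of a rectangle of side $3^{-\ell}$ and $P$-mass $4^{-\ell}$.

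The main obstacle is the forced equipartition $n_{ij}=4^{\ell-1}$; everything after that is routine bookkeeping with the similitudes $U_{(i,j)}$ and the recursion of Lemma~\ref{lemma22}, from which uniqueness and the closed form for $V_{4^{\ell}}$ fall out of the induction hypothesis. That step rests essentially on the remark following Corollary~\ref{cor21} (the bound $|n_{ij}-n_{pq}|\le1$ for an optimal allocation) combined with divisibility of $4^{\ell}$ by $4$. One minor point to watch is that the induction runs only over $\ell(n)\ge1$, so the smallest inductive step reduces to $\ell-1=1$, namely the base case; this is consistent because $4^{\ell-1}\ge4$ precisely when $\ell\ge2$.
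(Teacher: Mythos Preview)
Your proof is correct and follows essentially the same inductive scheme as the paper: base case via Corollary~\ref{cor21}, inductive step via Lemma~\ref{lemma41} and Lemma~\ref{lemma22}, with the induction hypothesis identifying each $U_{(i,j)}^{-1}(\gb_{ij})$ and then reassembling $\ga_{4^{\ell}}$. The one place where you are more explicit than the paper is the equipartition $n_{ij}=4^{\ell-1}$: the paper simply asserts that each $U_{(i,j)}^{-1}(\gb_{ij})$ is an optimal set of $4^{k}$-means, whereas you derive equal cardinalities from the remark $|n_{ij}-n_{pq}|\le1$ together with $4\mid 4^{\ell}$, which is a clean and welcome justification.
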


\begin{proof} We will prove the statement by induction. By Corollary~\ref{cor21}, it is true if $\ell(n)=1$. Let us assume that it is true for $n=4^k$ for some positive integer $k$. We now show that it is also true if $n=4^{k+1}$. Let $\gb$ be an optimal set of $4^{k+1}$-means. Set $\gb_{ij}:=\gb \ii A_{(i,j)}$ for $1\leq i, j\leq 2$. Then, by Lemma~\ref{lemma41} and Lemma~\ref{lemma22}, $U_{(i, j)}^{-1}(\gb_{ij})$ is an optimal set of $4^k$-means, and so
$U_{(i, j)}^{-1}(\gb_{ij})=\set{(A(\gs), A(\gt)) : (\gs,\gt) \in \set{1, 2}^{k\ast 2}}$ which implies $\gb_{ij}=\set{(A(i\gs), A(j\gt)) : (\gs,\gt) \in \set{1, 2}^{k\ast 2}}$. Thus, $\gb=\uu_{i, j=1}^2 \gb_{ij}=\set{(A(\gs), A(\gt)) : (\gs, \gt) \in \set{1, 2}^{(k+1)\ast 2}}$ is an optimal set of $4^{k+1}$-means. Since $(A(\gs), A(\gt))$ is the centroid of $A_{(\gs, \gt)}$ for each $(\gs, \gt)\in \mathcal{I}^{k+1}$, the set $\gb$ is unique. Now, by Lemma~\ref{lemma22}, we have the quantization error as
\[V_{k+1}=\mathop{\sum}\limits_{i, j=1}^2 \frac 1{36}V_{^k}=\frac 1 9 \cdot \frac 1{4}\cdot \frac1{9^k}=\frac 14 \frac{1}{9^{k+1}}. \]
Thus, by induction, the proof of the proposition is complete.
\end{proof}

\begin{defi}  \label{defi1} For $n\in \D N$ with $n\geq 4$ let $\ell(n)$ be the unique natural number with $4^{\ell(n)} < n\leq 2\cdot 4^{\ell(n)}$. For $I\sci \set{1, 2}^{\ell(n)\ast 2}$ with card$(I)=n-4^{\ell(n)}$ let $\ga_n(I)$ be the set defined as follows:
\begin{align*} \ga_n(I)&=\mathop{\uu}\limits_{(\gs, \gt) \in \set{1, 2}^{\ell(n)\ast 2} \setminus I}\set{(A(\gs), A(\gt))} \uu (\mathop{\uu}\limits_{ (\gs, \gt) \in I} \set{(A(\gs 1), A(\gt)), (A(\gs 2), A(\gt))}).
\end{align*}
\end{defi}
\begin{remark} In Definition~\ref{defi1}, instead of choosing the set $\set{(A(\gs 1), A(\gt)), (A(\gs 2), A(\gt))}$, one can choose  $\set{(A(\gs), A(\gt 1)), (A(\gs), A(\gt 2))}$, i.e., the set associated with each $(\gs, \gt)\in I$ can be chosen in two different ways. Moreover, the subset $I$ can be chosen from $\set{1, 2}^{\ell(n)\ast 2}$ in ${}^{4^{\ell(n)}}C_{n-4^{\ell(n)}}$ ways. Hence, the number of the sets $\ga_n(I)$ is $2^{\te{card}(I)}\cdot{}^{4^{\ell(n)}}C_{n-4^{\ell(n)}}$.
\end{remark}

The following example illustrates Definition~\ref{defi1}.

\begin{example} Let $n=5$. Then, $\ell(n)=1$, $I\sci \set{1, 2}^{\ast 2}$ with $\te{card}(I)=1$, and so
\begin{align*}
\ga_5(\set{(1,1)})&=\set{(A(1), A(2)), (A(2), A(1)), (A(2), A(2))}\uu\set{(A(11), A(1)), (A(12), A(1))}\\
&=\{(\frac 16, \frac 56), (\frac 56, \frac 16), (\frac 56, \frac 56)\}\uu \set{(\frac 1{18}, \frac 1 6), (\frac 5{18}, \frac 16)}, \end{align*}
or,
\begin{align*} \ga_5(\set{(1,1)})&=\set{(A(1), A(2)), (A(2), A(1)), (A(2), A(2))}\uu\set{(A(1), A(11)), (A(1), A(12))}\\
&=\{(\frac 16, \frac 56), (\frac 56, \frac 16), (\frac 56, \frac 56)\}\uu \set{(\frac 1{6}, \frac 1 {18}), (\frac 1{6}, \frac 5{18})}.
\end{align*}
Similarly, one can get six more sets by taking $I=\set{(1, 2)}$, $\set{(2, 1)}$, or $\set{(2, 2)}$, i.e., the number of the sets $\ga_n(I)$ in this case is $2^{\te{card}(I)}\cdot{}^{4^{\ell(n)}}C_{n-4^{\ell(n)}}=8$.
\end{example}

\begin{prop}\label{prop4}
{\it Let $n\geq 4$ and $\ga_n(I)$ be the set as defined in Definition~\ref{defi1}. Then, $\ga_n(I)$ forms an optimal set of $n$-means with quantization error
\[V_n=\frac 14 \frac 1{36^{\ell(n)}} \Big(2 \cdot 4^{\ell(n)}-n +\frac 5 9(n-4^{\ell(n)})\Big).\]}
\end{prop}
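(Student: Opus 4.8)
The plan is to prove the statement by induction on $\ell(n)$, leaning on the product representation $P=P_c\times P_c$ (Proposition~\ref{prop10}) together with the confinement and reduction results already established. Write $k:=\ell(n)$, so that $4^k<n\le 2\cdot 4^k$, and set $g_k(m):=\frac14\frac1{36^k}\big(2\cdot 4^k-m+\frac59(m-4^k)\big)$. A one-line expansion gives $g_k(m)=\frac14\frac1{36^k}\big(\frac{13}{9}4^k-\frac49 m\big)$, so $g_k$ is \emph{affine in $m$}; moreover $g_k(4^k)=\frac14\frac1{9^k}=V_{4^k}$ (Proposition~\ref{prop31}), $g_0(1)=\frac14=V$ (Lemma~\ref{lemma3}), and $g_0(2)=\frac5{36}=V_2$ (Proposition~\ref{prop1}). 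These are the seed values, and the affineness of $g_k$ will be the device that makes the final bookkeeping collapse.

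First I would treat the base case $k=1$, so $n\in\{5,6,7,8\}$. Given an optimal set $\ga$ of $n$-means, Lemma~\ref{lemma41} yields $\ga=\uu_{i,j=1}^{2}\gb_{ij}$ with $\gb_{ij}:=\ga\ii A_{(i,j)}\neq\es$, and the Remark following Corollary~\ref{cor21} forces the cardinalities $n_{ij}:=\te{card}(\gb_{ij})$ to be pairwise within $1$ and to sum to $n\le 8$, so $n_{ij}\in\{1,2\}$ with exactly $n-4$ of them equal to $2$. By Lemma~\ref{lemma22}, $U_{(i,j)}^{-1}(\gb_{ij})$ is an optimal set of $n_{ij}$-means, hence equals $\{(\frac12,\frac12)\}$ when $n_{ij}=1$ and one of the two sets of Proposition~\ref{prop1} when $n_{ij}=2$; therefore $\ga$ is exactly some $\ga_n(I)$ of Definition~\ref{defi1} and $V_n=\sum_{i,j}\frac1{36}V_{n_{ij}}=(8-n)\frac1{36}\cdot\frac14+(n-4)\frac1{36}\cdot\frac5{36}=g_1(n)$. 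Conversely every $\ga_n(I)$ has $n$ points, and since minimizing over a superset cannot increase distortion, $\int\min_{a\in\ga_n(I)}\|x-a\|^2\,dP\le\sum_{i,j}\frac1{36}V_{n_{ij}}=V_n$; as $V_n$ is the infimum over $n$-point sets, equality holds and $\ga_n(I)$ is optimal.

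For the inductive step I would assume the result, with error formula $g_k$, for $\ell=k$, and take $n$ with $4^{k+1}<n\le 2\cdot 4^{k+1}$, i.e.\ $\ell(n)=k+1$. If $\ga$ is an optimal set of $n$-means, then as above $\ga=\uu_{i,j}\gb_{ij}$ with balanced cardinalities $n_{ij}$ summing to $n$; since $\lfloor n/4\rfloor\ge 4^k$ and $\lceil n/4\rceil\le 2\cdot 4^k$, each $n_{ij}\in\{4^k,\dots,2\cdot 4^k\}$, and by Lemma~\ref{lemma22} each $U_{(i,j)}^{-1}(\gb_{ij})$ is an optimal $n_{ij}$-means set, whence $V_{n_{ij}}=g_k(n_{ij})$ by Proposition~\ref{prop31} if $n_{ij}=4^k$ and by the induction hypothesis otherwise. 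Summing via Lemma~\ref{lemma22} and invoking the affineness of $g_k$,
\[
V_n=\sum_{i,j=1}^{2}\frac1{36}\,g_k(n_{ij})=\frac1{36}\cdot\frac14\frac1{36^k}\Big(\frac{13}{9}4^{k+1}-\frac49 n\Big)=g_{k+1}(n).
\]
For the optimality of $\ga_n(I)$ I would check from Definition~\ref{defi1} that $\ga_n(I)\ii A_{(i,j)}=U_{(i,j)}\big(\ga_{m_{ij}}(I'_{ij})\big)$ for suitable subsets $I'_{ij}\ci\set{1,2}^{k\ast 2}$, where $m_{ij}:=4^k+\te{card}(I'_{ij})\in\{4^k,\dots,2\cdot 4^k\}$ and $\sum_{i,j}m_{ij}=4^{k+1}+\te{card}(I)=n$; each $\ga_{m_{ij}}(I'_{ij})$ is an optimal $m_{ij}$-means set by Proposition~\ref{prop31} or the induction hypothesis, so $\int\min_{a\in\ga_n(I)}\|x-a\|^2\,dP\le\sum_{i,j}\frac1{36}g_k(m_{ij})=g_{k+1}(n)=V_n$, which again forces equality and hence optimality.

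The conceptual weight of the argument is carried entirely by earlier results---confinement of optimal sets to $\uu_{i,j}A_{(i,j)}$ (Lemma~\ref{lemma41}), the balancing of block cardinalities (Remark after Corollary~\ref{cor21}), and the self-similar identity $V_n=\sum_{i,j}\frac1{36}V_{n_{ij}}$ with optimality of the blocks (Lemma~\ref{lemma22})---so what remains is organizational. The step I expect to require the most care is this bookkeeping: verifying that the admissible cardinality windows $\{4^k,\dots,2\cdot 4^k\}$ nest so the recursion terminates exactly at level $\ell(n)$ with every terminal block carrying one or two points, confirming that the quadrant decomposition of $\ga_n(I)$ is literally a union of rescaled lower-level sets $\ga_{m_{ij}}(I'_{ij})$ in the sense of Definition~\ref{defi1}, and---most easily overlooked---using the affineness of $m\mapsto V_m$ on each window so that the total error does not depend on how the $n-4^{\ell(n)}$ extra points are distributed among the blocks.
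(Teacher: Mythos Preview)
Your proof is correct and takes essentially the same route as the paper---induction via the confinement and block-reduction lemmas (Lemma~\ref{lemma41} and Lemma~\ref{lemma22}) together with the balancing remark after Corollary~\ref{cor21}. Your explicit induction on $\ell(n)$ and your use of the affineness of $m\mapsto g_k(m)$ to collapse the error sum are cleaner than the paper's presentation, which inducts somewhat informally on $k=n-4^{\ell(n)}$ (implicitly also descending in $\ell$) and computes the error by direct summation over the pieces of $\ga_n(I)$.
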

\begin{proof} We have $n=4^{\ell(n)}+k$ where $1\leq k\leq 4^{\ell(n)}$. Set $\gb_{ij}=\ga\ii A_{ij}$ with $n_{ij}=\te{card}(\gb_{ij})$ for $1\leq i, j\leq 2$. Let us prove it by induction. We first assume $k=1$. By Lemma~\ref{lemma41} and Lemma~\ref{lemma22}, we can assume that each of $U_{(i, j)}^{-1}(\gb_{ij})$ for $i=2$ and $j=1, 2$, are optimal sets of $4^{\ell(n)-1}$-means and $U_{(1,1)}^{-1}(\gb_{11})$ is an optimal set of $(4^{\ell(n)-1}+1)$-means. Thus, for $i=2$ and $j=1, 2$, we can write
\begin{align*} U_{(i, j)}^{-1}(\gb_{ij})&=\set{(A(\gs), A(\gt)) : (\gs, \gt) \in \set{1,2}^{(\ell(n)-1)\ast 2}}, \te{ and }\\
U_{(1, 1)}^{-1}(\gb_{11})&=\set{(A(\gs), A(\gt)) : (\gs, \gt) \in \set{1,2}^{(\ell(n)-1)\ast 2}\setminus \set{\gt}}\uu U_\gt(\ga_2),
\end{align*}
for some $\gt \in \set{1,2}^{(\ell(n)-1)\ast 2}$, where $\ga_2$ is an optimal set of two-means. Thus,
\[\ga_n(\set{(1,1)\gt})=\mathop{\uu}\limits_{i, j=1}^2 \gb_{ij}=\set{(A(\gs), A(\gt)) : (\gs, \gt) \in \set{1,2}^{\ell(n)\ast 2}\setminus \set{(1,1)\gt}}\uu U_{(1,1)\gt}(\ga_2),\]
for some $\gt \in \set{1,2}^{(\ell(n)-1)\ast 2}$, where $\ga_2$ is an optimal set of two-means. Notice that instead of choosing $U_{(1,1)}^{-1}(\gb_{11})$ as an optimal set of $(4^{\ell(n)-1}+1)$-means, one can choose any one from $U_{(i, j)}^{-1}(\gb_{ij})$ for $i=2$, $j=1, 2$, as an optimal set of $(4^{\ell(n)-1}+1)$-means. Hence, for $n=4^{\ell(n)}+1$, one can write
\[\ga_n(I)=\mathop{\uu}\limits_{i, j=1}^2 \gb_{ij}=\set{(A(\gs), A(\gt)) : (\gs, \gt) \in \set{1,2}^{\ell(n)\ast 2}\setminus \set{\gt}}\uu U_{\gt}(\ga_2),\]
where $I=\set{\gt}$ for some $\gt \in \set{1,2}^{\ell(n)\ast 2}$ as an optimal set of $n$-means. Thus, we see that the proposition is true if $n=4^{\ell(n)}+k$. Similarly, one can prove that the proposition is true for any $1\leq k\leq 4^{\ell(n)}$.
Then, the quantization error is
\begin{align*}
&V_n=\min_{(a, b)  \in \ga_n(I)}\|x-(a, b)\|^2 dP= \sum_{(\gs, \gt) \in \set{1, 2}^{\ell(n)\ast 2} \setminus I}\int_{A_\gs\times A_\gt}\|x-(A(\gs), A(\gt))\|^2 d(P_c\times P_c) \\
&\qquad \qquad +\sum_{(\gs, \gt) \in  I}\sum_{i=1}^2 \int_{A_{\gs i}\times A_\gt}\|x-(A(\gs i), A(\gt))\|^2 d(P_c\times P_c) \\
&=\sum_{(\gs, \gt) \in \set{1, 2}^{\ell(n)\ast 2} \setminus I} \frac 1{4^{\ell(n)}} (u_{\gs}^2+u_{\gt}^2)\frac 1 8+\sum_{(\gs, \gt) \in  I}\sum_{i=1}^2  \frac 1{4^{\ell(n)}} \frac 12 (u_{\gs i}^2+u_{\gt}^2)\frac 1 8\\
&=\sum_{(\gs, \gt) \in \set{1, 2}^{\ell(n)\ast 2} \setminus I} \frac 1{4^{\ell(n)}} (u_{\gs}^2+u_{\gt}^2)\frac 1 8+\sum_{(\gs, \gt) \in  I} \frac 1{4^{\ell(n)}} (\frac 19 u_{\gs}^2+u_{\gt}^2)\frac 1 8.
\end{align*}
Since, $\te{card}(\set{1, 2}^{\ell(n)\ast 2} \setminus I)=2\cdot 4^{\ell(n)}-n$, $\te{card}(I)=n-4^{\ell(n)}$, $u_\gs=u_\gt=\frac 1{3^{\ell(n)}}$, upon simplification, we have $V_n=\frac 14 \frac 1{36^{\ell(n)}} \Big(2 \cdot 4^{\ell(n)}-n +\frac 5 9(n-4^{\ell(n)})\Big)$.
Thus, the proof of the proposition is complete.
\end{proof}

\begin{defi}  \label{defi3} For $n\in \D N$ with $n\geq 4$ let $\ell(n)$ be the unique natural number with $2\cdot 4^{\ell(n)} < n<  4^{\ell(n)+1}$. For $I\sci \set{1, 2}^{\ell(n)\ast 2}$ with card$(I)=n-2\cdot4^{\ell(n)}$ let $\ga_n(I)$ be the set defined as follows:
\begin{align*} \ga_n(I)&=\mathop{\uu}\limits_{(\gs, \gt) \in \set{1, 2}^{\ell(n)\ast 2} \setminus I}\set{(A(\gs 1), A(\gt)), (A(\gs 2), A(\gt))}\\
 &\uu (\mathop{\uu}\limits_{ (\gs, \gt) \in I} \set{(A(\gs 1), A(\gt 1)), (A(\gs 1), A(\gt2)),  (A(\gs 2), A(\gt))}).
\end{align*}
\end{defi}

\begin{remark} \label{rem10} In Definition~\ref{defi3}, instead of choosing the set $\set{(A(\gs 1), A(\gt)), (A(\gs 2), A(\gt))}$, one can choose  $\set{(A(\gs), A(\gt 1)), (A(\gs), A(\gt 2))}$. Instead of choosing the set \\
$\set{(A(\gs 1), A(\gt 1)), (A(\gs 1), A(\gt2)),  (A(\gs 2), A(\gt))}$, one can choose either the set \\$\set{(A(\gs 1), A(\gt)), (A(\gs 2), A(\gt1)),  (A(\gs 2), A(\gt2))}$, or \\$\set{(A(\gs 1), A(\gt 1)), (A(\gs 2), A(\gt1)),  (A(\gs), A(\gt 2))}$, or \\ $\set{(A(\gs), A(\gt 1)), (A(\gs 1), A(\gt2)),  (A(\gs2), A(\gt 2))}$, i.e., the set corresponding to each $(\gs, \gt) \in \set{1, 2}^{\ell(n)\ast 2} \setminus I$ can be chosen in two different ways, and the set corresponding to each $(\gs, \gt) \in I$ can be chosen in four different ways.
Since $\te{card}(\set{1, 2}^{\ell(n)\ast 2} \setminus I)=4^{\ell(n)}-(n-2\cdot4^{\ell(n)})=3 \cdot 4^{\ell(n)}-n$ and the subset $I$ can be chosen from $\set{1, 2}^{\ell(n)\ast 2}$ in ${}^{4^{\ell(n)}}C_{n-2 \cdot 4^{\ell(n)}}$ ways, the number of the sets $\ga_n(I)$ is $2^{3 \cdot 4^{\ell(n)}-n}\cdot 4^{\te{card}(I)}\cdot{}^{4^{\ell(n)}}C_{n-2 \cdot 4^{\ell(n)}}$.
\end{remark}

We now give an example illustrating Definition~\ref{defi3}.
\begin{example} Let $n=9$. Then, $\ell(n)=1$, $I\sci \set{1, 2}^{\ast 2}$ with $\te{card}(I)=1$. Take $I=\set{(1, 1)}$. Then,
\begin{align*}
\ga_9(\set{(1,1)})&=\set{(A(11), A(2)), (A(12), A(2)), (A(21), A(2)), (A(22), A(2)), (A(21),A(1)), \\
& (A(22), A(1))}\uu\set{(A(11), A(1)), (A(12), A(11)), (A(12), A(12))}\\
&=\{(\frac 1{18}, \frac 56), (\frac {5}{18}, \frac 56), (\frac {13}{18}, \frac {5}{6}), (\frac {17}{18}, \frac {5}{6}), (\frac {13}{18}, \frac {1}{6}), (\frac {17}{18}, \frac {1}{6})\}\\
&\qquad \uu \set{(\frac 1{18}, \frac 1 6), (\frac 5{18}, \frac 1{18}), (\frac 5{18}, \frac 5{18})}.
\end{align*}
Note that each of $\ga_9(\set{(1,1)})$, $\ga_9(\set{(1,2)})$, $\ga_9(\set{(2,1)})$, $\ga_9(\set{(2,2)})$ can be chosen in $32$ ways, i.e., the numbers of the sets $\ga_9(I)$ in this case is $4 \cdot 32=128$. Moreover, using the formula in Remark~\ref{rem10}, we have
\[2^{3 \cdot 4^{\ell(n)}-n}\cdot 4^{\te{card}(I)}\cdot{}^{4^{\ell(n)}}C_{n-2 \cdot 4^{\ell(n)}}=128.\]
\end{example}

\begin{prop} \label{prop5}
{\it Let $n\geq 4$ and $\ga_n(I)$ be the set as defined in Definition~\ref{defi3}. Then, $\ga_n(I)$ forms an optimal set of $n$-means with quantization error
\[V_n= \frac 1{36^{\ell(n)+1}} (9\cdot 4^{\ell(n)}-2n).\]}
\end{prop}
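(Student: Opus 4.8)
The plan is to prove the statement by induction on $\ell(n)$ (equivalently, by strong induction on $n$), run together with Propositions~\ref{prop31} and~\ref{prop4} and modeled on the proof of Proposition~\ref{prop4}. For the base case $\ell(n)=1$ one has $n\in\set{9,10,\ldots,15}$; since $P=P_c\times P_c$, the values $V_2=\frac5{36}$, $V_3=\frac1{12}$, $V_4=\frac1{36}$ are supplied by Propositions~\ref{prop1} and~\ref{prop2} and Corollary~\ref{cor21}, and the claim follows by distributing $n$ among the four cells $A_{(i,j)}$, $1\le i,j\le2$, as evenly as possible and invoking Lemma~\ref{lemma22}.

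For the inductive step let $\ga$ be an optimal set of $n$-means with $2\cdot4^{\ell(n)}<n<4^{\ell(n)+1}$. By Lemma~\ref{lemma41}, $\ga=\mathop{\uu}\limits_{i,j=1}^{2}\gb_{ij}$ with $\gb_{ij}=\ga\ii A_{(i,j)}\ne\es$, and by Lemma~\ref{lemma22} each $U_{(i,j)}^{-1}(\gb_{ij})$ is an optimal set of $n_{ij}$-means for $P_c\times P_c$ with $\sum_{i,j}n_{ij}=n$ and $V_n=\frac1{36}\sum_{i,j}V_{n_{ij}}$. Since $\frac n4$ lies strictly between $2\cdot4^{\ell(n)-1}$ and $4^{\ell(n)}$, the Remark following Corollary~\ref{cor21} (or a direct comparison of the marginal decreases of $m\mapsto V_m$, which is affine of slope $-2\cdot36^{-\ell(n)}$ on $[2\cdot4^{\ell(n)-1},4^{\ell(n)}]$ and strictly steeper below $2\cdot4^{\ell(n)-1}$) forces every $n_{ij}$ into the interval $[2\cdot4^{\ell(n)-1},4^{\ell(n)}]$; on that interval $V_{n_{ij}}$ is given by Proposition~\ref{prop4} at its maximal index, by Proposition~\ref{prop31}, or by the induction hypothesis (Proposition~\ref{prop5} one level down), and all three formulas coincide with the single affine expression $V_m=36^{-\ell(n)}\bigl(9\cdot4^{\ell(n)-1}-2m\bigr)$. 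Hence $\sum_{i,j}V_{n_{ij}}=36^{-\ell(n)}\bigl(9\cdot4^{\ell(n)}-2n\bigr)$ no matter how the $n_{ij}$ split, giving $V_n=\frac1{36^{\ell(n)+1}}\bigl(9\cdot4^{\ell(n)}-2n\bigr)$.

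It remains to recognize $\ga$ as one of the sets $\ga_n(I)$ of Definition~\ref{defi3} and to verify conversely that each such set is optimal. Writing $c_{ij}=n_{ij}-2\cdot4^{\ell(n)-1}\ge0$ (so $\sum_{i,j}c_{ij}=n-2\cdot4^{\ell(n)}$), a first-level square with $c_{ij}=0$ is, by Proposition~\ref{prop4} at its largest index, precisely the configuration placing the two centroid-points $\set{(A(\gs1),A(\gt)),(A(\gs2),A(\gt))}$ in each of its sub-cells, while one with $c_{ij}>0$ is, by Proposition~\ref{prop5} one level down, the configuration with two such points in all but $c_{ij}$ of its sub-cells and the three-point block of Definition~\ref{defi3} in the remaining $c_{ij}$; reassembling and absorbing the two- and four-fold internal choices listed in Remark~\ref{rem10} gives $\ga=\ga_n(I)$ with $I$ the union of these distinguished sub-cells. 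For the converse, the sub-rectangles $A_{(\gs,\gt)}$, $(\gs,\gt)\in\set{1,2}^{\ell(n)\ast2}$, partition the support, so using the prescribed centroids as quantizers on them and evaluating the resulting integrals by the variance identity (as in the proof of Proposition~\ref{prop4}, with $V(X_c)=\frac18$) yields
\[\int\min_{c\in\ga_n(I)}\|x-c\|^2\,dP\le(3\cdot4^{\ell(n)}-n)\,\frac5{36^{\ell(n)+1}}+(n-2\cdot4^{\ell(n)})\,\frac3{36^{\ell(n)+1}}=\frac1{36^{\ell(n)+1}}\bigl(9\cdot4^{\ell(n)}-2n\bigr)=V_n,\]
whence $\ga_n(I)$ is optimal. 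The main obstacle is the bookkeeping here: checking that one level of the recursion of Definition~\ref{defi3} reproduces exactly the level-$(\ell(n)-1)$ optimal blocks, and that---thanks to the affineness of $m\mapsto V_m$ on $[2\cdot4^{\ell(n)-1},4^{\ell(n)}]$---the total cost is insensitive to how the $n-2\cdot4^{\ell(n)}$ distinguished cells are distributed among the four quadrants; the remaining ingredients are the routine integral evaluations already used in the preceding proofs.
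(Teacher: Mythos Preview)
Your argument is correct and follows essentially the same route as the paper's proof: induction on $\ell(n)$ via Lemmas~\ref{lemma41} and~\ref{lemma22}, reducing to optimal sets at level $\ell(n)-1$ and reassembling into the configurations of Definition~\ref{defi3}. The only real difference is organizational---you exploit the affineness of $m\mapsto V_m$ on $[2\cdot4^{\ell(n)-1},4^{\ell(n)}]$ to read off $V_n$ independently of how $n$ splits among the four cells and then check that $\ga_n(I)$ attains it, whereas the paper first pins down the structure of an optimal set (treating the case $k=1$ and then ``similarly'' for other $k$) and afterwards computes $V_n$ by direct integration over $\ga_n(I)$.
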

\begin{proof} We have $n=2 \cdot 4^{\ell(n)}+k$ where $1\leq k<2 \cdot 4^{\ell(n)}$. Set $\gb_{ij}=\ga\ii A_{ij}$ with $n_{ij}=\te{card}(\gb_{ij})$ for $1\leq i, j\leq 2$. Let us prove it by induction. We first assume $k=1$. By Lemma~\ref{lemma41} and Lemma~\ref{lemma22}, we can assume that each of $U_{(i, j)}^{-1}(\gb_{ij})$ for $i=2$ and $j=1, 2$, are optimal sets of $2 \cdot 4^{\ell(n)-1}$-means and $U_{(1,1)}^{-1}(\gb_{11})$ is an optimal set of $(2\cdot 4^{\ell(n)-1}+1)$-means. Thus, for $i=2$ and $j=1, 2$, we can write
\begin{align*} U_{(i, j)}^{-1}(\gb_{ij})&=\set{U_{(\gs, \gt)}(\ga_2) : (\gs, \gt) \in \set{1,2}^{(\ell(n)-1)\ast 2}}, \te{ and }\\
U_{(1, 1)}^{-1}(\gb_{11})&=\set{U_{(\gs, \gt)}(\ga_2) : (\gs, \gt) \in \set{1,2}^{(\ell(n)-1)\ast 2}\setminus \set{\gt}}\uu U_\gt(\ga_3),
\end{align*}
for some $\gt \in \set{1,2}^{(\ell(n)-1)\ast 2}$, where $\ga_3$ is an optimal set of three-means. Thus
\[\ga_n(\set{(1,1)\gt})=\mathop{\uu}\limits_{i, j=1}^2 \gb_{ij}=\set{U_{(\gs, \gt)}(\ga_2) : (\gs, \gt) \in \set{1,2}^{\ell(n)\ast 2}\setminus \set{(1,1)\gt}}\uu U_{(1,1)\gt}(\ga_3),\]
for some $\gt \in \set{1,2}^{(\ell(n)-1)\ast 2}$, where $\ga_3$ is an optimal set of three-means. Notice that instead of choosing $U_{(1,1)}^{-1}(\gb_{11})$ as an optimal set of $(2\cdot 4^{\ell(n)-1}+1)$-means, one can choose any one from $U_{(i, j)}^{-1}(\gb_{ij})$ for $i=2$, $j=1, 2$, as an optimal set of $(2\cdot 4^{\ell(n)-1}+1)$-means. Hence, for $n=2\cdot 4^{\ell(n)}+1$, one can write
\[\ga_n(I)=\mathop{\uu}\limits_{i, j=1}^2 \gb_{ij}=\set{U_{(\gs, \gt)}(\ga_2) : (\gs, \gt) \in \set{1,2}^{\ell(n)\ast 2}\setminus \set{\gt}}\uu U_{\gt}(\ga_3),\]
where $I=\set{\gt}$ for some $\gt \in \set{1,2}^{\ell(n)\ast 2}$ as an optimal set of $n$-means. Thus, we see that the proposition is true if $n=2\cdot 4^{\ell(n)}+1$. Similarly, one can prove that the proposition is true for any $1\leq k<2\cdot 4^{\ell(n)}$. Thus, writing $\ga_2=\set{(A(1), A(\es)), (A(2), A(\es))}$, and $\ga_3=\set{(A(1), A(1)), (A(1), A(2)), (A(2), A(\es))}$, we have, in general,
\begin{align*} \ga_n(I)&=\mathop{\uu}\limits_{(\gs, \gt) \in \set{1, 2}^{\ell(n)\ast 2} \setminus I}\set{(A(\gs 1), A(\gt)), (A(\gs 2), A(\gt))}\\
 &\uu (\mathop{\uu}\limits_{ (\gs, \gt) \in I} \set{(A(\gs 1), A(\gt 1)), (A(\gs 1), A(\gt2)),  (A(\gs 2), A(\gt))}),
\end{align*}
where $I\sci \set{1, 2}^{\ell(n)\ast 2}$ with $\te{card}(I)=k$ for some $1\leq k<2 \cdot 4^{\ell(n)}$. Then, we obtain the quantization error as
\begin{align*}
&V_n=\min_{(a, b)  \in \gb_n(I)}\|x-(a, b)\|^2 dP= \sum_{(\gs, \gt) \in \set{1, 2}^{\ell(n)\ast 2} \setminus I}\sum_{i=1}^2 \mathop\int\limits_{A_{\gs i}\times A_\gt}\|x-(A(\gs i), A(\gt))\|^2 d(P_c\times P_c) \\
&\qquad \qquad  +\sum_{(\gs, \gt) \in  I}\Big(\sum_{j=1}^2 \mathop\int\limits_{A_{\gs 1}\times A_{\gt j}}\|x-(A(\gs 1), A(\gt j))\|^2 d(P_c\times P_c)\\
&\qquad \qquad + \mathop\int \limits_{A_{\gs 2}\times A_{\gt}}\|x-(A(\gs 2), A(\gt))\|^2 d(P_c\times P_c)\Big) \\
&=\sum_{(\gs, \gt) \in \set{1, 2}^{\ell(n)\ast 2} \setminus I}\sum_{i=1}^2  \frac 1{4^{\ell(n)}} \frac 12 (u_{\gs i}^2+u_{\gt}^2)\frac 1 8+\sum_{(\gs, \gt) \in  I}\frac 1{4^{\ell(n)}} \Big(\sum_{j=1}^2  \frac 14 (u_{\gs 1}^2+u_{\gt j}^2)\frac 1 8+ \frac 12(u_{\gs 2}^2+u_{\gt}^2)\frac 1 8\Big)\\
&=\sum_{(\gs, \gt) \in \set{1, 2}^{\ell(n)\ast 2} \setminus I} \frac 1{4^{\ell(n)}} (\frac 1 9u_{\gs}^2+u_{\gt}^2)\frac 1 8+\sum_{(\gs, \gt) \in  I} \frac 1{4^{\ell(n)}} (u_{\gs}^2+5 u_{\gt}^2)\frac 1 {72}.
\end{align*}
Since, $\te{card}(\set{1, 2}^{\ell(n)\ast 2} \setminus I)=3\cdot 4^{\ell(n)}-n$, $\te{card}(I)=n-2\cdot 4^{\ell(n)}$, $u_\gs=u_\gt=\frac 1{3^{\ell(n)}}$, upon simplification, we have $V_n= \frac 1{36^{\ell(n)+1}} (9\cdot 4^{\ell(n)}-2n)$.
Thus, the proof of the proposition is complete.
\end{proof}



\section{Quantization dimension and quantization coefficient for $P$}

The techniques employed in the previous sections also provide closed formulas for the quantization errors involved at each step.  Such closed formulas are amenable for direct calculation of the quantization dimension and the quantization coefficient for the probability distribution involved.  Hence, in this section we will calculate the quantization dimension $D(P)$ of the probability distribution $P$, and the accumulation points for the $D(P)$-dimensional quantization coefficients. By Proposition~\ref{prop31}, Proposition \ref{prop4}, and Proposition \ref{prop5},
the $n$th quantization error $V_n$ is given by
\begin{equation} \label{eq200} V_n=\left\{\begin{array} {ll}
\frac 14 \frac 1{36^{\ell(n)}} \Big(2 \cdot 4^{\ell(n)}-n +\frac 5 9(n-4^{\ell(n)})\Big) & \te{ if }4^{\ell(n)} \leq n\leq 2\cdot 4^{\ell(n)},\\
\frac 1{36^{\ell(n)+1}} (9\cdot 4^{\ell(n)}-2n) &\te{ if } 2\cdot 4^{\ell(n)} < n<  4^{\ell(n)+1}.
\end{array}
\right.
\end{equation}

\begin{prop} \label{prop6}
{\it The quantization dimension $D(P)$ of the probability distribution $P$ exists and equals $\frac{\log 4}{\log 3}. $}
\end{prop}

\begin{proof}
By \eqref{eq200}, for $4^{\ell(n)} \leq  n\leq 2\cdot 4^{\ell(n)},$ it follows that $V_{2\cdot 4^{\ell(n)}}\leq V_n\leq V_{4^{\ell(n)} }$, i.e.,
\[\frac 5{36} 9^{-\ell(n)}\leq V_n\leq \frac 1 4 9^{-\ell(n)},\] and so
\[\frac{2\ell(n) \log 4}{-\log \frac 5{36} +\ell(n) \log  9}\leq  \frac {2\log n}{-\log V_n}<\frac{2\log 2+2\ell(n)\log 4}{-\log \frac 1 4 +\ell(n) \log 9}.\]
Thus, we deduce that
\[ \lim_{n\to \infty} \frac{2 \log n}{-\log V_n} =\frac{\log 4}{\log 3}. \]
Similarly, for $2\cdot 4^{\ell(n)} < n<  4^{\ell(n)+1}$, we also obtain the same limit.  Hence,
$$D(P)=\lim_{n\to \infty} \frac{2 \log n}{-\log V_n}=\frac{\log 4}{\log 3}.$$
Thus, the proof of the proposition is complete.
\end{proof}

\begin{prop} \label{prop7}
Let $\gb:=D(P)$ be the quantization dimension of $P$. Then, the $\gb$-dimensional quantization coefficient for$P$ does not exist, and the accumulation points of $\{n^{\frac{2}{\beta}} V_n \}_{n\in \D N}$ lie in the closed interval $[\frac{1}{12}, \frac{5}{4}]$.
\end{prop}
\begin{proof}
Recall the sequence of quantization errors $\set{V_n}_{n=4}^\infty$ given by \eqref{eq200}. Again, notice that $4^{\frac 1{\gb}}=3$.
Along the sequence $\{4^{\ell(n)}\}_{n\in \D N}$, we have
$\lim_{n\to \infty}  (4^{\ell(n)})^{\frac{2}{\beta}} V_{4^{\ell(n)}}=\frac{1}{4}.$  Similarly, along the sequence
$\{2\cdot 4^{\ell(n)}\}_{n\in \D N}, $ we have $\lim_{n\to \infty} (2\cdot4^{\ell(n)})^{\frac{2}{\beta}} V_{2. 4^{\ell(n)}} =\frac{5}{12}. $  Consequently, $\lim\limits_{n \to \infty} n^{\frac{2}{\beta}} V_n $ does not exist. Now, we calculate the range for the accumulation points of $\{n^{\frac{2}{\beta}} V_n \}_{n\in \D N}$. The following two cases can arise:

\tit{Case~1.  $4^{\ell(n)}\leq n\leq 2\cdot 4^{\ell(n)}$. }

In this case, we have
$V_{2. 4^{\ell(n)}} \leq V_n \leq V_{4^{\ell(n)}},$
implying
$ (4^{\ell(n)})^{\frac{2}{\beta}} V_{2\cdot4^{\ell(n)}} \leq n^{\frac{2}{\beta}} V_n \leq (2\cdot 4^{\ell(n)})^{\frac{2}{\beta}} V_{4^{\ell(n)}}. $
Since
\[\lim_{n\to \infty} (4^{\ell(n)})^{\frac{2}{\beta}} V_{2\cdot4^{\ell(n)}}=\frac {5}{36}, \te{ and } \lim_{n\to \infty} (2\cdot 4^{\ell(n)})^{\frac{2}{\beta}} V_{4^{\ell(n)}}=\frac 34,\]
it follows that along such subsequences, we have
$\liminf_n n^{\frac{2}{\beta}} V_n =\frac{5}{36} < \frac{3}{4} =
\limsup_n n^{\frac{2}{\beta}} V_n.$

\tit{Case~2.  $2\cdot 4^{\ell(n)}<n<4^{\ell(n)+1} $.}

In this case, we have
$V_{4^{\ell(n)+1}} < V_n<V_{2\cdot 4^{\ell(n)}}$,
implying
$$ (2\cdot 4^{\ell(n)})^{\frac{2}{\beta}} V_{4^{\ell(n)+1}}<n^{\frac{2}{\beta}} V_n <(4^{\ell(n)+1})^{\frac{2}{\beta}} V_{2\cdot 4^{\ell(n)}}. $$
Since
\[\lim_{n\to \infty} (2\cdot 4^{\ell(n)})^{\frac{2}{\beta}} V_{4^{\ell(n)+1}}=\frac {1}{12}, \te{ and } \lim_{n\to \infty}(4^{\ell(n)+1})^{\frac{2}{\beta}} V_{2\cdot 4^{\ell(n)}}=\frac 54,\]
it follows that
$\liminf_n n^{\frac{2}{\beta}} V_n =\frac {1}{12} < \frac 54=
\limsup_n n^{\frac{2}{\beta}} V_n.$

By Case~1 and Case~2, for $n\in \D N$, we see that
\[\liminf_n n^{\frac{2}{\beta}} V_n =\frac {1}{12} < \frac 54=
\limsup_n n^{\frac{2}{\beta}} V_n,\]
which yields the fact that the accumulation points of $\{n^{\frac{2}{\beta}} V_n \}_{n\in \D N}$ lie in the closed interval $[\frac{1}{12}, \frac{5}{4}]$.
Thus, the proof of the proposition is complete.
\end{proof}

\section{Discussion and Concluding Remarks}

{\bf Motivation.}  As it has been mentioned in Introduction, the main motivation for this article is completion of the programme initiated in \cite{MR}. In the meantime, we extend the results in \cite{CR} to the setting of infinite affine transformations.  Analogously to \cite{GL3}, this completes the programme of providing complete quantization for affine measures on $\mathbb{R}^2. $
\medskip

{\bf Observations and Remarks.}  Quantization of continuous random signals (or random variables and processes) is an
important part of digital representation of analog signals for various coding techniques
(e.g., source coding, data compression, archiving, restoration). The oldest example of quantization in statistics is rounding off. Sheppard (see \cite{S1}) was the first who analyzed rounding off for estimating densities by histograms.
Any real number $x$ can be rounded off (or quantized) to the nearest
integer, say $q(x) = [x]$, with a resulting quantization error $e(x) = x - q(x) . $
Hence, the restored signal may differ
from the original one and some information can be lost.
 Thus, in quantization of a continuous set of values there is always a distortion (also known as noise or error) between the original set of values and the quantized set of values. The main goal in quantization theory is finding a set of quantizers with minimum distortion, which has been extensively investigated by numerous authors \cite{CG, GL, GN, LCG, SS1, Z}.
A different approach for uniform scalar quantization is
developed in \cite{SS2}, where the correlation properties of a Gaussian process are exploited to
evaluate the asymptotic behavior of the random quantization rate for uniform quantizers.
General quantization problems for Gaussian processes in infinite-dimensional functional
spaces are considered in \cite{LP}. In estimating weighted integrals of time series with no quadratic mean derivatives, by means of samples at discrete times, it is known that the rate of convergence of mean-square error is reduced from $n^{-2}$ to $n^{-1.5}$ when the samples are quantized (see \cite{BC1}). For smoother time series, with $k=1,2, \cdots$ quadratic mean derivatives, the rate of convergence is reduced from $n^{-2k-2}$ to $n^{-2}$ when the samples are quantized, which is a very significant reduction (see \cite{BC2}). The interplay between sampling and quantization is also studied in \cite{BC2}, which asymptotically leads to optimal allocation between the number of samples and the number of levels of quantization. Quantization also seems to be a promising tool in recent development in numerical probability (see, e.g., \cite{PPP}).

By Proposition~\ref{prop000} the points in an optimal set are the centroids of their own Voronoi regions. Consequently, the points in an optimal set are an evenly-spaced distribution of sites in the domain with minimum distortion error with respect to a given probability measure and is therefore very useful in many fields, such as clustering, data compression, optimal mesh generation, cellular biology, optimal quadrature, coverage control and geographical optimization, for more details one can see \cite{DFG, OBSC}. Besides, it has  applications in energy efficient distribution of base stations in a cellular network  \cite{HCHSVH, KKR, S2}. In both geographical and cellular applications
the distribution of users is highly complex and often modeled by a fractal \cite{ABDHW, LZSC}.

\medskip

{\bf Future Directions.} $k$-means clustering is a method of vector quantization, originally from signal processing, that aims to partition $n$ observations, or the underlying data set into $k$ clusters in which each observation belongs to the cluster with the nearest mean, also known as cluster center or cluster centroid. For a given $k$ and a given probability distribution in a data set there can be two or more different sets of $k$-means clusters: for example, with respect to a uniform distribution the unit square $\set{(x_1, x_2) : |x_1|\leq 1, |x_2|\leq 1}$ has four different sets of two-means clusters with cluster centers $\set{(\frac 12, \frac 12), \, (-\frac 12, -\frac 12)}$, $\set{(-\frac 12, \frac 12), \, (\frac 12, -\frac 12)}$, $\set{(-\frac 12, 0), \, (\frac 12, 0)}$, and $\set{(0, \frac 12), \, (0, -\frac 12)}$. Among these only $\set{(-\frac 12, 0), \, (\frac 12, 0)}$, and $\set{(0, \frac 12), \, (0, -\frac 12)}$ form two different optimal sets of two-means. In other words, we can say that for a given $k$, among the multiple sets of $k$-means clusters, the centers of a set with the smallest distortion error form an optimal set of $k$-means. Thus, it is much more difficult to calculate an optimal set of $k$-means than to calculate a set of $k$-means clusters. There are several work done in the direction of $k$-means clustering. On the other hand, there is not much work in the direction of finding optimal sets of $k$-means clusters, and the work in this paper is an addition in this direction.

The probability measure $P$ considered in this study has identical marginal distributions, which is instrumental in determining optimal sets of 2-, 3-, and 4-means accurately.  Besides, it enables us to bridge infinitely generated affine measures with finitely generated ones, and consequently, connect optimal sets of $n$-means for $P$ and $P_C \times P_C . $  It would be interesting to investigate if similar results can be achieved when $P$ is induced by different infinite probability vectors $\{p_{ij}\}$ than considered in this article.

\end{document}